\numberwithin{equation}{section}
\theoremstyle{plain}
\newtheorem{thm}{Theorem}[section]
\newaliascnt{cor}{thm} 
\newaliascnt{prop}{thm}
\newaliascnt{lem}{thm}
\newtheorem{cor}[cor]{Corollary}
\newtheorem{lem}[lem]{Lemma}
\theoremstyle{definition}
\newaliascnt{defn}{thm}
\newaliascnt{asu}{thm}
\newtheorem{asu}[asu]{Assumption}
\theoremstyle{remark}
\newaliascnt{rem}{thm}
\newaliascnt{exa}{thm}
\newaliascnt{sett}{thm}
\newtheorem{rem}[rem]{Remark}
\newtheorem{exa}[exa]{Example}
\newenvironment{psmallmatrix}
{\left(\begin{smallmatrix}}
{\end{smallmatrix}\right)}
\newcommand{\RR}{\mathbb{R}}
\newcommand{\CC}{\mathbb{C}}
\newcommand{\NN}{\mathbb{N}}
\newcommand{\sbb}{\mathbbm{s}}
\newcommand{\eps}{\varepsilon}
\newcommand{\sL}{\mathcal{L}}
\newcommand{\sA}{\mathcal{A}}
\newcommand{\sB}{\mathcal{B}}
\newcommand{\sC}{\mathcal{C}}
\newcommand{\sK}{\mathcal{K}}
\newcommand{\sP}{\mathcal{P}}
\newcommand{\sF}{\mathcal{F}}
\newcommand{\sQ}{\mathcal{Q}}
\newcommand{\sR}{\mathcal{R}}
\newcommand{\sRtn}{\mathcal{R}_{\tn}}
\newcommand{\sX}{\mathcal{X}}
\newcommand{\sx}{{\scriptstyle\mathcal{X}}}
\newcommand{\su}{{\scriptstyle\mathcal{U}}}
\newcommand{\sv}{{\scriptstyle\mathcal{V}}}
\newcommand{\ssv}{{\scriptscriptstyle\mathcal{V}}}
\newcommand{\sG}{\mathcal{G}}
\newcommand{\sV}{\mathcal{V}}
\newcommand{\sS}{\mathcal{S}}
\newcommand{\sT}{\mathcal{T}}
\newcommand{\rL}{\mathrm{L}}
\newcommand{\rM}{\mathrm{M}}
\newcommand{\rC}{\mathrm{C}}
\newcommand{\rW}{\mathrm{W}}
\newcommand{\p}{{\raisebox{1.3pt}{{$\scriptscriptstyle\bullet$}}}}
\newcommand{\ps}{{\raisebox{0.3pt}{{$\scriptscriptstyle\bullet$}}}}
\newcommand{\RRp}{{\RR_+}}
\newcommand{\LpneCm}{\mathchoice{\rL^p\bigl([0,1],\CC^m\bigr)}{\rL^p([0,1],\CC^m)}{\rL^p([0,1],\CC^m)}{\rL^p([0,1],\CC^m)}}
\newcommand{\CneCm}{\mathchoice{{\rC\bigl([0,1],\CC^m\bigr)}}{{\rC([0,1],\CC^m)}}{{\rC([0,1],\CC^m)}}{{\rC([0,1],\CC^m)}}}
\newcommand{\WepneCm}{{\rW^{1,p}([0,1],\CC^m)}}
\newcommand{\WzpneCm}{{\rW^{2,p}([0,1],\CC^m)}}
\newcommand{\Yl}{{Y^\bot}}
\newcommand{\h}{h}
\newcommand{\Tt}{(T(t))_{t\ge0}}
\newcommand{\sTt}{(\sT(t))_{t\ge0}}
\newcommand{\dt}{\,dt}
\newcommand{\dr}{\,dr}
\newcommand{\ds}{\,ds}
\newcommand{\ddss}{{\frac{d^2}{ds^2}\,}}
\newcommand{\dds}{\frac{d}{ds}}
\newcommand{\tddss}{\textstyle{\frac{d^2}{ds^2}\,}}
\newcommand{\tdds}{\textstyle\frac{d}{ds}}
\newcommand{\diag}{\operatorname{diag}}
\newcommand{\rg}{\operatorname{rg}}
\newcommand{\inc}{\overset{\text{c}\;}{\hookrightarrow}}
\newcommand{\Xme}{X_{-1}}
\newcommand{\Ame}{A_{-1}}
\renewcommand{\r}{\right}
\renewcommand{\l}{\left}
\newcommand{\Gt}{{\tilde G}}
\newcommand{\Wb}{{\bar W}}
\newcommand{\Id}{Id}
\newcommand{\sBt}{\sB_t}
\newcommand{\Bttn}{(\sB_t)_{t\in[0,\tn]}}
\newcommand{\tn}{{t_0}}
\newcommand{\dX}{{\partial X}}
\newcommand{\phib}{{\bar\varphi^i}}
\newcommand{\cb}{{\bar c}}
\newcommand{\cbi}{{\cb}^{-1}}
\newcommand{\Cb}{{\bar C}}
\newcommand{\uh}{{\hat u}}
\newcommand{\vh}{{\hat v}}
\newcommand{\wh}{{\hat w}}
\newcommand{\fh}{{\hat f}}
\newcommand{\Phin}{{\Phi_0}}
\newcommand{\Phie}{{\Phi_1}}
\newcommand{\Phix}{{\mathbf{\Phi}}}
\newcommand{\Phieb}{{\bar\Phi_1}}
\newcommand{\Rt}{R_t}
\newcommand{\St}{S_t}
\renewcommand{\theta}{\vartheta}
\newcommand{\eins}{\mathbbm{1}}
\newcommand{\Beq}{\bar B}
\renewcommand{\u}{q}
\newcommand{\mv}{\mathsf{v}}
\newcommand{\me}{\mathsf{e}}
\newcommand{\sBte}{\sB_t^e}
\newcommand{\sBti}{\sB_t^i}
\newcommand{\mei}{\me^i}
\newcommand{\mee}{\me^e}
\newcommand{\Xe}{X^e}
\newcommand{\Xin}{X^i}
\newcommand{\sXe}{\sX^e}
\newcommand{\sXi}{\sX^i}
\newcommand{\LpRpCl}{\rL^p(\RRp,\CC^{\ell})}
\newcommand{\CnRpCl}{\rC_0(\RRp,\CC^{\ell})}
\newcommand{\WepRpCl}{\rW^{1,p}(\RRp,\CC^{\ell})}
\newcommand{\WzpRpCl}{\rW^{2,p}(\RRp,\CC^{\ell})}
\newcommand{\Tle}{T_{l}^e}
\newcommand{\Tlet}{(\Tle(t))_{t\ge0}}
\newcommand{\Tre}{T_r^e}
\newcommand{\Tret}{(\Tre(t))_{t\ge0}}
\newcommand{\Tli}{T_{l}^i}
\newcommand{\Tlit}{(\Tli(t))_{t\ge0}}
\newcommand{\Tri}{T_r^i}
\newcommand{\Trit}{(\Tri(t))_{t\ge0}}
\newcommand{\Qt}{Q_t}
\newcommand{\sTe}{\sT^e}
\newcommand{\sTet}{(\sTe(t))_{t\ge0}}
\newcommand{\sTi}{\sT^i}
\newcommand{\sTit}{(\sTi(t))_{t\ge0}}
\newcommand{\Phine}{\Phi_0^e}
\newcommand{\Phini}{\Phi_0^i}
\newcommand{\Phiee}{\Phi_1^e}
\newcommand{\Phiei}{\Phi_1^i}
\renewcommand{\ae}{\lambda^e}
\newcommand{\ai}{\lambda^i}
\newcommand{\qe}{q^e}
\newcommand{\qi}{q^i}
\newcommand{\ce}{\mu^e}
\newcommand{\ci}{\mu^i}
\newcommand{\aij}{\ai_j}
\newcommand{\aek}{\ae_k}
\newcommand{\cij}{\ci_j}
\newcommand{\cek}{\ce_k}
\newcommand{\phiek}{\varphi^e_k}
\newcommand{\phiij}{\varphi^i_j}
\newcommand{\fe}{f^e}
\newcommand{\fin}{f^i}
\newcommand{\ue}{u^e}
\newcommand{\ui}{u^i}
\newcommand{\ueh}{\uh^e}
\newcommand{\uih}{\uh^i}
\newcommand{\Phiebe}{\Phieb^e}
\newcommand{\Phiebi}{\Phieb^i}
\newcommand{\Dme}{D_m^e}
\newcommand{\Dmi}{D_m^i}
\newcommand{\Dne}{D_0^e}
\newcommand{\Dni}{D_0^i}
\newcommand{\Dei}{D_1^i}
\newcommand{\sAme}{\sA_m^e}
\newcommand{\sAmi}{\sA_m^i}
\newcommand{\sAe}{\sA^e}
\newcommand{\sAi}{\sA^i}
\newcommand{\sLe}{\sL^e}
\newcommand{\sLi}{\sL^i}
\newcommand{\sXr}{\tilde\sX}
\newcommand{\sGr}{\tilde\sG}
\newcommand{\loc}{\mathrm{loc}}
\newcommand{\Rne}{R_0^e}
\newcommand{\Rni}{R_0^i}
\newcommand{\Rei}{R_1^i}
\newcommand{\Une}{U_0^e}
\newcommand{\Uni}{U_0^i}
\newcommand{\Uei}{U_1^i}
\newcommand{\Vne}{V_0^e}
\newcommand{\Vni}{V_0^i}
\newcommand{\Vei}{V_1^i}
\newcommand{\Wne}{W_0^e}
\newcommand{\Wni}{W_0^i}
\newcommand{\Wei}{W_1^i}
\newcommand{\Wneb}{\Wb_0^e}
\newcommand{\Wnib}{\Wb_0^i}
\newcommand{\Weib}{\Wb_1^i}
\newcommand{\Be}{B^e}
\newcommand{\Bi}{B^i}
\newcommand{\Te}{T^e}
\newcommand{\Ti}{T^i}
\newcommand{\R}{\Gamma}
\newcommand{\Jp}{{J_{\varphi^e}}}
\newcommand{\Jpb}{{J_{\phib}}}
\newcommand{\Jpi}{{J_{\varphi^e}^{-1}}}
\newcommand{\Jpbi}{{J_{\phib}^{-1}}}
\newcommand{\IId}{\diag(\Id,\Id)} 
\title[Waves and Diffusion on Metric Graphs]
{Waves and Diffusion on Metric Graphs with General Vertex Conditions}
\author{Klaus-Jochen Engel}
\address{University of L'Aquila, Department of Information Engineering, Computer Science and Mathematics, Via Vetoio, Coppito, I-67100 L'Aquila (AQ), Italy}
\author{Marjeta Kramar Fijav\v{z}}
\address{University of Ljubljana, Faculty of Civil and Geodetic Engineering, Jamova 2, SI-1000 Ljubljana, Slovenia / Institute of Mathematics, Physics, and Mechanics, Jadranska 19, SI-1000 Ljubljana, Slovenia}
\subjclass[2010]{47D06, 35R02, 35G46, 35K05, 35L05}%
\begin{document}

\begin{abstract}
We prove well-posedness for general linear wave- and diffusion equations on compact or non-compact metric graphs allowing various conditions in the vertices. More precisely, using the theory of strongly continuous operator semigroups we show that a large class of (not necessarily self-adjoint) second order differential operators with general (possibly non-local) boundary conditions generate cosine families, hence also analytic semigroups, on ${\mathrm{L}}^p({\mathbb{R}_+},{\mathbb{C}}^{\ell})\times{\mathrm{L}}^p([0,1],{\mathbb{C}}^m)$ for $1\le p<+\infty$.
\end{abstract}

\maketitle

\section{Introduction}

It is well-known (for details see \cite[Sect.~II.6]{EN:00} and \cite[Sect.~3.14]{ABHN:11}) that first and second order abstract Cauchy problems of the form
\begin{equation}\label{eq:acp}
\text{(ACP$_1$)}\quad
\begin{cases}
\dot x(t)= G x(t),&t\ge0,\\
x(0)=x_0,
\end{cases}\qquad \text{ and }\qquad
\text{(ACP$_2$)}\quad
\begin{cases}
\ddot x(t)= G x(t),&t\ge0,\\
x(0)=x_0,&\\
\dot x(0) = x_1,
\end{cases}
\end{equation}
for a linear (in general unbounded) operator $G:D(G)\subset X\to X$ on a Banach space $X$ are well-posed if and only if $G$ generates a strongly continuous semigroup and a cosine family on $X$, respectively. It follows by \cite[Thm.~3.14.17]{ABHN:11} that generators of cosine families generate analytic semigroups of angle $\frac\pi2$. Hence, well-posedness of (ACP$_2$) always implies well-posedness of  (ACP$_1$).

\smallskip
In this paper we are concerned with such Cauchy problems for second order elliptic differential operators $G$ acting on spaces of $\rL^p$-functions defined on a finite union of intervals. Operators of this type appear, e.g., in the modeling of diffusion- and wave equations on metric graphs. In this case the intervals represent the edges of the graph while its structure is encoded in the boundary conditions appearing in the domain $D(G)$ of $G$.
In the simplest case we can take $X=(\rL^p[0,1])^m=\LpneCm$ and
\begin{equation}\label{eq:example-intro}
\begin{aligned}
G&=\lambda(\p)\cdot\tddss,\\ %[3pt]
D(G)&=\l\{f\in\WzpneCm\,\bigg|\,
\begin{aligned}
&V_0f(0) + V_1f(1)= 0\\[-3pt]
&W_0f'(0) - W_1f'(1)  +U_1f(1) + U_0f(0)=0
\end{aligned}
\r\},
\end{aligned}
\end{equation}
where $\lambda(s)=\diag(\lambda_j(s))_{j=1}^m$
for positive, Lipschitz continuous ``diffusion'' coefficients $\lambda_j(\p)$ and suitable  ``boundary'' matrices 
$V_0, V_1\in\rM_{k_0\times m}(\CC)$ and $U_0, U_1, W_0, W_1\in\rM_{k_1\times m}(\CC)$, for $k_0,k_1\in\NN$ satisfying $k_0+k_1=2m$. 

\smallskip
Our main result, \autoref{thm:gen-Lp}, gives for such operators a condition %, in terms of the invertibility of a map $\sRtn$, 
implying the generation of a cosine family, hence the well-posedness of \eqref{eq:acp}. For example, by  \autoref{cor:gen-V-W}, the operator $G$ in \eqref{eq:example-intro} generates a cosine family if
\begin{equation}\label{eq:det=/0}
\det
\begin{pmatrix}
V_1&V_0\\
W_1\cdot \mu(1)^{-1}&W_0\cdot \mu(0)^{-1}
\end{pmatrix}
\ne0,
\end{equation}
where $\mu(s):=\sqrt{\lambda(s)}=\diag(\sqrt{\lambda_j(s)})_{j=1}^m$. In particular, \eqref{eq:det=/0} implies that for $G$ as in \eqref{eq:example-intro} both Cauchy problems in \eqref{eq:acp}  are well-posed.

\goodbreak
\smallskip
Motivated by different problems from physics, chemistry, biology, and engineering, the study of dynamical processes on metric graphs (also called networks or one-dimensional ramified spaces) has received much attention in the last decades.
Diffusion equations on networks were first considered in the 1980s, the earliest references include 
\cite{Lum:80,Roth:84,Nic:85,Bel:85}. Since then many authors used functional analytic methods to treat such problems, 
we only mention \cite{GOO:93,Cat:97,ABN:01,KMS:07,BFN:16, 
Mug:14}.
The study of wave equation on networks was initiated about at the same time by \cite{Ali:84,Ali:94}, see also \cite{LLS:94, Ku:02, CF:03, GZM:05, DZ:06, KMS:07, Klo:12,Jac:15}. 

Almost simultaneously, another community of theoretical physicists was mainly interested in the Schrödinger equation on a network structure (calling it a quantum graph), see \cite{Ex:90, KS:97, KS:99, Ku:08, BK13, SSJW:15}.
They also considered so-called non-compact graphs,  where some edges are allowed to be infinite. 

All these problems were initially treated in a $L^2$-setting using Hilbert-spaces techniques. 
Then interpolation was used  to generalize the results to $L^p$-spaces. Typically, in this context only self-adjoint operators are considered.

\smallskip
On the contrary, we use methods form the theory of operator semigroups  and work on $L^p$-spaces directly. The novelty of our approach is manifold. In fact, it allows us to 
\begin{itemize}
\item study non-self-adjoint generators $G$, 
\item treat very general (also non-diagonal) ``diffusion coefficient matrices'' $a(\p,\p)$, cf. \eqref{eq:rep-a(s)},
\item treat very general boundary conditions of the form
\[\Phin f=0,\quad\Phie( f'+ B f)=0,\]
for appropriate boundary functionals $\Phin, \Phie$ and a bounded operator $B$,  cf.~\eqref{eq:def-G-general},
\item consider state spaces $X=\LpRpCl\times\LpneCm$ with application to non-compact graphs,
\item treat all cases for $p\in[1,+\infty)$ simultaneously without using interpolation arguments,
\item explicitly compute the phase space  $\ker(\Phin)\times X$ of $G$, cf.~\autoref{thm:gen-Lp}.
\end{itemize}

Our reasoning is based on a recent result for boundary perturbations of domains of generators developed in \cite{ABE:13} (which we recall in \autoref{thm:pert-bc}) and the fact that squares of group generators generate cosine families, cf. \cite[Expl.~3.14.15]{ABHN:11}. Roughly speaking, we start from a simple first-order differential operator $\sA$ generating a semigroup. Then we perturb its domain to obtain $\sG$ whose square is closely related to $G$. Moreover, since we arrange $\sG$ to be similar to $-\sG$, it automatically generates a group. Hence, $\sG^2$ and consequently also $G$ generate cosine families. To obtain our main theorem in its most general form we use similarity transformations and bounded perturbations.
In this way we are able to generalize the boundary conditions for non-self adjoint and non-compact graphs given in \cite{KPS:08,HKS:15}, see \autoref{ex-matrices},  as well as the general boundary conditions in terms of ``boundary subspaces'' presented in \cite[Sect.~6.5]{Mug:14}, see \autoref{exa:BC-Y}. We can also treat different non-local boundary conditions (for example those studied in \cite{MN:14}, see \autoref{ex-nonlocal-interval}). 

\smallskip
This paper is organized as follows. In \autoref{sec:Lp-gen} we introduce our setup, state and prove the main generation result (\autoref{thm:gen-Lp}) and apply it to two important classes of boundary conditions (\autoref{cor:gen-V-W} and \autoref{cor:net-Y}). This facilitates the verification of the  generation conditions \eqref{eq:cond-det} and \eqref{eq:intersec-0} used in \autoref{sec:W&D} to show well-posedness of diffusion- and wave equations  on (possibly non-compact) graphs for a wide variety of boundary conditions.  In the appendix we 
recall a perturbation result from \cite{ABE:13} which is the main tool for our approach. 

\smallskip
Our notation closely follows \cite{EN:00}.

\section{Generation of cosine families}
\label{sec:Lp-gen}

\subsection{The setup}
Throughout this section we make the following assumptions. Although the results presented here are abstract, the terminology already suggests that our main motivation arises from the study of dynamical processes on (possibly non-compact) metric graphs. In the sequel we use the notation $\RRp:=[0,+\infty)$. 

\goodbreak

\begin{asu}\label{asu:s-asu-Lp} 
Consider for some fixed $p\in[1,\infty)$, $\ell\in\NN_0$, and $m\in\NN_0$ satisfying $\ell+m>0$
\begin{enumerate}[(i)]
\item the space $\Xe:=\LpRpCl$ of functions on $\ell$ \emph{``external edges''},
\item the space $\Xin:=\LpneCm$ of functions on $m$ \emph{``internal edges''},
\item the \emph{``state space''} $X:=\Xe\times\Xin$ of functions on all $\ell+m$ edges,
\item two \emph{``boundary spaces''} $Y_0,Y_1\subseteq\CC^{\ell+2m}$ satisfying $Y_0\oplus Y_1=\CC^{\ell+2m}=:Y$,
\item  two \emph{``boundary functionals''}  $\Phin=(\Phine,\Phini)\in\sL(\CnRpCl\times\CneCm,Y_0)$ and $\Phie=(\Phiee,\Phiei)\in\sL(\CnRpCl\times\CneCm,Y_1)$ (to determine the boundary conditions),
\item a \emph{``boundary operator''} $B\in\sL(X)$ (appearing in the boundary conditions) leaving $\WepRpCl\times\WepneCm$ invariant,
\item  a Lipschitz continuous function%
\footnote{This implies that the multiplication operator induced by $a'(\p,\p)$ is bounded which is needed in our approach.}
$a(\p,\p)\colon\RRp\times [0,1] \to\rM_{\ell+m}(\CC)$, the so-called  
\emph{``diffusion coefficients matrix''} of the form
\begin{equation}\label{eq:rep-a(s)}
\begin{aligned}
a(\p,\p) &:=
\begin{psmallmatrix}
\qe(\ps)&0\\0&\qi(\ps)
\end{psmallmatrix}
\cdot
\begin{psmallmatrix}
\ae(\ps)&0\\0&\ai(\ps)
\end{psmallmatrix}
\cdot
\begin{psmallmatrix}
\qe(\ps)&0\\0&\qi(\ps)
\end{psmallmatrix}^{-1}\\
&:=q(\p,\p)\cdot\lambda(\p,\p)\cdot q(\p,\p)^{-1},
\end{aligned}
\end{equation}
where $\qe(\p)\in\rL^{\infty}(\RRp,\rM_{\ell}(\CC))$ and $\qi(\p)\in\rL^{\infty}([0,1],\rM_m(\CC))$ are Lipschitz continuous (pointwise) invertible with bounded inverses, and
\begin{alignat*}{3}
&\ae(s)=\diag\bigl(\aek(s)\bigr)_{k=1}^{\ell}\in\rM_{\ell}(\CC),&\quad&s\in\RRp,\\
&\ai(s)=\diag\bigl(\aij(s)\bigr)_{j=1}^m\in\rM_m(\CC),&\quad&s\in[0,1],
\end{alignat*}
with entries satisfying for some $\eps>0$
\begin{alignat}{3}\label{eq:aek-beschr}
\eps&<\aek(s)<\eps^{-1}&\quad&\text{for all }s\in\RRp,\ k=1,\ldots,\ell,
\\ \notag
0&<\aij(s)&\quad&\text{for all }s\in[0,1],\ j=1,\ldots,m.
\end{alignat}
\end{enumerate}
\end{asu}

Note that (vii) implies that $q(\p,\p)^{-1}$, and
the functions $\aek(\p)\in\rC(\RRp)$, $\aij(\p)\in\rC[0,1]$ for $k=1,\ldots,\ell$, $j=1,\ldots,m$, are all Lipschitz continuous.

\smallbreak
Using that $\WepRpCl\subset\CnRpCl$ and $\WepneCm\subset\CneCm$  (see \cite[Sect.8.2]{Bre:11}) we then define on $X=\LpRpCl\times\LpneCm$ the operator%
\begin{equation}\label{eq:def-G-general}
\begin{aligned}
G&:=a(\p,\p)\cdot\tddss,\\
D(G)&:=\bigl\{f\in\WzpRpCl\times\WzpneCm:\Phin f=0,\;\Phie( f'+ B f)=0\bigr\}.
\end{aligned}
\end{equation}
Our aim is to give conditions on the boundary functionals $\Phin,\,\Phie$ implying that $G$ generates a cosine family on $X$, hence by \cite[Thm.~3.14.17]{ABHN:11} also an analytic semigroup of angle $\frac\pi2$. As we will see in \autoref{thm:gen-Lp} below this can be achieved (independently on $B$) through an  invertibility condition on the operator $\sRtn$ defined in \eqref{eq:def-sR} below.

\smallbreak
We note that we do not consider the case $p=\infty$. In fact, this would yield a non-densely defined operator $G$ which cannot be a generator. More generally, it is well-known that on $\rL^\infty$-spaces strongly continuous semigroups are uniformly continuous, i.e., have a bounded generator. Hence, an operator $G\subset a(\p,\p)\cdot\ddss$ will never, independently on the domain, generate a $C_0$-semigroup on $\rL^\infty(\RRp,\CC^l)\times\rL^\infty([0,1],\CC^m)$.

\smallbreak
In order to state our main result rigorously we need some more notations. 
For $1\le k\le \ell$ we define
\begin{align}
\notag
\cek(\p)&:=\sqrt{\ae_k(\p)}\in\rC(\RRp)\quad\text{and set}\\
\label{eq:def-ce}
\ce(\p)&:=\diag\bigl(\ce_k(\p)\bigr)_{k=1}^{\ell}\in\rC\bigl(\RRp,\rM_{\ell}(\CC)\bigr).
\end{align}
Moreover, we put 
\begin{equation}\label{eq:def-phik}
\phiek(s):=\int_0^s\frac{dr}{\ce_k(r)},\ s\in\RRp.
\end{equation}
Then it follows from \eqref{eq:aek-beschr} that all $\phiek:\RRp\to\RRp$ are Lipschitz continuous,  surjective and strictly increasing, hence invertible with Lipschitz continuous inverses $(\phiek)^{-1}:\RRp\to\RRp$.

\smallskip
Similarly, for $1\le j\le m$ we define
\begin{align}
\notag
\cij(\p)&:=\sqrt{\aij(\p)}\in\rC[0,1]\quad\text{and set}\\
\label{eq:def-ci}
\ci(\p)&:=\diag\bigl(\cij(\p)\bigr)_{j=1}^m\in\rC\bigl([0,1],\rM_m(\CC)\bigr).
\end{align}
Furthermore, we consider
\begin{alignat}{5}\label{eq:def-varphi-j}
&\phiij(s):=\int_0^s\frac{dr}{\cij(r)},
\quad\cb_j:=\frac1{\phiij(1)},
&\quad&\phib_j(s):=\cb_j\cdot\phiij(s),
\quad 
s\in[0,1].
\end{alignat}
Then all $\phib_j:[0,1]\to[0,1]$ are Lipschitz continuous, surjective and strictly monotone, hence invertible with Lipschitz continuous inverses $(\phib_j)^{-1}:[0,1]\to[0,1]$.

Next we put 
\begin{equation}\label{eq:def-varphi-vectors}
\begin{aligned}
&\varphi^e:=(\varphi^e_1,\ldots,\varphi^e_{\ell})^\top:\RRp\to\CC^{\ell},&\quad &(\varphi^e)^{-1}:=\left((\varphi^e_1)^{-1}\ldots,(\varphi^e_{\ell})^{-1}\right)^\top:\RRp\to\CC^{\ell},\\
&\phib:=(\phib_1,\ldots,\phib_m)^\top:[0,1]\to\CC^m, &\quad &(\phib)^{-1}:=\left((\phib_1)^{-1},\ldots,(\phib_m)^{-1}\right)^\top:[0,1]\to\CC^m,\\
&\cb:=\bigl(\cb_1,\ldots,\cb_m\bigr)\in\CC^m, &\quad &\cbi:=\bigl(\cb_1^{-1},\ldots,\cb_m^{-1}\bigr)\in\CC^m,\\
&\Cb:=\diag\bigl(\cb_1,\ldots,\cb_m\bigr)\in\rM_m(\CC).&&
\end{aligned}
\end{equation}
Finally, for functions $h=(h_1,\ldots,h_n)^\top:I\subseteq\RR\to\CC^n$, $\nu=(\nu_1,\ldots,\nu_n):I\to I^n$, a vector $x=(x_1,\ldots,x_n)^\top\in\RR^n$ and scalars $r,s\in\RR$ we set
\begin{align*}
(h\circ\nu)(r)&:=\bigl(h_1(\nu_1(r)),\ldots,h_n(\nu_n(r))\bigr)^\top,\\
h(r+x)&:=\bigl(h_1(r+x_1),\ldots,h_n(r+x_n)\bigr)^\top,\\
h\bigl(r+\tfrac{s}{x}\bigr)&:=\bigl(h_1\bigl(r+\tfrac{s}{x_1}\bigr),\ldots,h_n\bigl(r+\tfrac{s}{x_n}\bigr)\bigr)^\top.
\end{align*}

Using this notation we define the transformations
\begin{equation}\label{eq:def-Q_phi}
\begin{aligned}
&\Jp\in\sL(\Xe),&\quad&\Jp\fe:=\fe\circ\varphi^e&\quad&\text{for }\fe\in\Xe,\\
&\Jpb\in\sL(\Xin), &&\Jpb\fin:=\fin\circ\phib&&\text{for }\fin\in\Xin.
\end{aligned}
\end{equation}
Then $\Jp$ and $\Jpb$ are invertible with bounded inverses $\Jpi=J_{(\varphi^e)^{-1}}\in\sL(\Xe)$ and $\Jpbi=J_{(\phib)^{-1}}\in\sL(\Xin)$. These maps will be used in \autoref{lem:rearrange-sG} to transform space dependent diffusion coefficients into constant ones.

\smallbreak
Next for fixed $\tn>0$ and $t\in[0,\tn]$ we introduce the bounded linear operators \\ $\Qt\in\sL(\rL^p([0,\tn],\CC^{\ell}),\Xe)$ and $\Rt,\St\in\sL(\rL^p([0,\tn],\CC^m),\Xin)$ by
\begin{equation}\label{eq:def-Rt-Sr}
\Qt\ue:=\ueh(t-\p\bigr),\quad
\Rt\ui:=\uih\bigl(t-\tfrac\ps\cb\bigr),
\quad\text{and}\quad
\St\ui:=\uih\bigl(t+\tfrac{\ps-1}\cb\bigr),
\end{equation}
where $\uh$ denotes the extension of a function $u$ defined on $I\subset\RR$ to $\RR$ by the value $0$. 
Observe that $\St=\psi\Rt$ for $\psi\in\sL(\Xin)$, $(\psi\fin)(\p):=\fin(1-\p)$.
Moreover, we define
\begin{equation*}
\Phieb=\bigl(\Phiebe,\Phiebi\bigr):=\Phie\cdot c(\p,\p)^{-1}
\in\sL\bigl(\CnRpCl\times\CneCm,Y_1\bigr),
\end{equation*}
where
\begin{equation}\label{eq:c(s)}
\begin{aligned}
c(\p,\p):=\sqrt{a(\p,\p)}
&=
\begin{psmallmatrix}
\qe(\ps)&0\\0&\qi(\ps)
\end{psmallmatrix}
\cdot
\begin{psmallmatrix}
\ce(\ps)&0\\0&\ci(\ps)
\end{psmallmatrix}
\cdot
\begin{psmallmatrix}
\qe(\ps)&0\\0&\qi(\ps)
\end{psmallmatrix}^{-1}\\
&=:q(\p,\p)\cdot\mu(\p,\p)\cdot q(\p,\p)^{-1},
\end{aligned}
\end{equation}
for $\ce(\ps)$ and $\ci(\ps)$ given in \eqref{eq:def-ce} and \eqref{eq:def-ci}, respectively.

\smallbreak
Now we are ready to introduce the operator $\sRtn$ as follows. Note that 
\begin{align*}
\rL^p\bigl([0,\tn],Y\bigr)&=\rL^p\bigl([0,\tn],Y_0\bigr)\times\rL^p\bigl([0,\tn],Y_1\bigr)\\
&=\rL^p\bigl([0,\tn],\CC^{\ell}\bigr)\times\rL^p\bigl([0,\tn],\CC^m\bigr)\times\rL^p\bigl([0,\tn],\CC^m\bigr).
\end{align*}

\begin{lem}\label{lem:bdd-ext}
The operator $\sRtn:\rW^{1,p}_0([0,\tn],Y)\subset\rL^p([0,\tn],Y)\to\rL^p([0,\tn],Y)$ given by
\begin{equation}\label{eq:def-sR}
\begin{aligned}
(\sRtn\su)(t):&=
\begin{psmallmatrix}
\Phieb&\Phin
\end{psmallmatrix}
\cdot
\begin{psmallmatrix}
q(\p,\p)&0\\
0&q(\p,\p)
\end{psmallmatrix}
\cdot
\begin{psmallmatrix}
\Jp&0&0\\
0&\Jpb&\Jpb\\
-\Jp&0&0\\
0&\Jpb&-\Jpb
\end{psmallmatrix}
\cdot
\begin{psmallmatrix}
\Qt&0&0\\
0&\St&0\\
0&0&\Rt\\
\end{psmallmatrix}\su\\
&=
\Bigl((\Phiebe-\Phine)\qe(\p)\Jp\cdot\Qt,(\Phiebi+\Phini)\qi(\p)\Jpb\psi\cdot\Rt,(\Phiebi-\Phini)\qi(\p)\Jpb\cdot\Rt\Bigr)\,\su
\end{aligned}
\end{equation}
is well-defined and has a unique bounded extension denoted again by $\sRtn\in\sL(\rL^p([0,\tn],Y))$.
\end{lem}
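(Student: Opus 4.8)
The plan is to verify that $\sRtn$ is well-defined on the dense subspace $\rW^{1,p}_0([0,\tn],Y)$ and bounded there for the $\rL^p$-norm, and then to extend it by continuity.

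First I would settle well-definedness. For $\su=(\ue,\ui_1,\ui_2)\in\rW^{1,p}_0([0,\tn],Y)$ each component lies in the corresponding $\rW^{1,p}_0$-space, hence embeds into a continuous function on $[0,\tn]$ vanishing at $0$ and at $\tn$; its extension by zero is therefore continuous on all of $\RR$. Consequently $\Qt\ue=\ueh(t-\p)$ is continuous with compact support, so $\Qt\ue\in\CnRpCl$, while $\Rt\ui_2,\St\ui_1\in\CneCm$. Since the composition operators $\Jp,\Jpb$ and the flip $\psi$ send continuous functions to continuous functions and the multiplication by $q(\p,\p)$ preserves continuity, the arguments of $\Phieb$ and $\Phin$ are admissible, so $(\sRtn\su)(t)$ is a well-defined element of $Y$ for every $t\in[0,\tn]$. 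Exploiting the uniform continuity of $\ueh,\uih$ one checks that $t\mapsto\Qt\ue$, $t\mapsto\Rt\ui_2$ and $t\mapsto\St\ui_1$ are continuous for the sup-norm, whence $\sRtn\su\in\rC([0,\tn],Y)\subset\rL^p([0,\tn],Y)$.

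The decisive step is the $\rL^p$-estimate, which is also the main difficulty: the functionals $\Phin,\Phieb$ are bounded only for the sup-norm and are genuinely unbounded on $\rL^p$, so a bound can be recovered only after integrating in $t$. Using the second (row) representation in \eqref{eq:def-sR} it suffices to bound the three summands separately; I treat the first, $F_1(t):=(\Phiebe-\Phine)\,\qe(\p)\,\Jp\,\Qt\ue$, the other two being analogous, with $\St=\psi\Rt$ reducing the $\St$-summand to the $\Rt$-summand since $\psi$ is an isometry. As $Y$ is finite dimensional, the Riesz representation theorem writes $\Phiebe-\Phine$ as integration against a finite, regular, $\sL(\CC^{\ell},Y)$-valued Borel measure $\nu$ on $\RRp$. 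Unwinding $\Jp\Qt\ue(s)=\ueh(t-\varphi^e(s))$ and the multiplication by $\qe$ gives
\[
F_1(t)=\int_{\RRp}\qe(s)\,\ueh\bigl(t-\varphi^e(s)\bigr)\,d\nu(s),
\]
which, as a function of $t$, is a superposition of translates of $\ueh$ with the uniformly bounded weight $\qe(s)$. By Minkowski's integral inequality together with the translation invariance of the $\rL^p$-norm,
\[
\|F_1\|_{\rL^p([0,\tn])}\le\int_{\RRp}\|\qe\|_\infty\,\bigl\|\ueh(\cdot-\varphi^e(s))\bigr\|_{\rL^p}\,d|\nu|(s)\le\|\qe\|_\infty\,|\nu|(\RRp)\,\|\ue\|_{\rL^p}.
\]
Handling the $\Rt$- and $\St$-summands in the same way (the composition $\Jpb$ and the isometry $\psi$ only rescale and reflect the shift, leaving the norm controlled) produces a constant $C>0$ with $\|\sRtn\su\|_{\rL^p([0,\tn],Y)}\le C\,\|\su\|_{\rL^p([0,\tn],Y)}$.

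Finally, since $\rW^{1,p}_0([0,\tn],Y)$ is dense in $\rL^p([0,\tn],Y)$ for $1\le p<\infty$, the bounded linear operator $\sRtn$ extends uniquely to an element of $\sL(\rL^p([0,\tn],Y))$. I expect the only genuine obstacle to be this $\rL^p$-estimate for non-local functionals $\Phin,\Phieb$; for purely local (boundary-evaluation) conditions the summands reduce to multiplication operators and translations, for which the estimate is immediate.
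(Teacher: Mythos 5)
Your proof is correct and follows essentially the same route as the paper: well-definedness via the compact support of $\uh(t-\theta(\p))$ (using $u(0)=0$) and continuity in $t$, then a Riesz representation of the boundary functionals by an operator-valued measure of bounded variation, an integral inequality exploiting the translation invariance of the $\rL^p$-norm, and extension by density. The only cosmetic difference is that you invoke Minkowski's integral inequality where the paper derives the same bound $\|\eta\|^p\|u\|_p^p$ by hand via H\"older's inequality and Fubini's theorem.
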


The operator $\sRtn$ plays a crucial role in our main result, see \autoref{thm:gen-Lp}.
As we will see later, in many important cases of boundary conditions involving just the boundary values, cf. \autoref{ssec:BM}, the operator $\sRtn$ reduces to a  matrix. Before starting the proof, we note that in this section we equip all subspaces $Z\subseteq\CC^n$ with  the maximum norm, i.e., we define 
\[
\bigl\|(v_1,\ldots,v_n)^\top\bigr\|_{Z}
:=\max\bigl\{|v_1|,\ldots,|v_n|\bigr\}.
\]

\begin{proof}[Proof of \autoref{lem:bdd-ext}]
Observe that 
\begin{align*}
&(\Phiebe-\Phine)\qe(\p)\Jp\in\sL\bigl(\CnRpCl,Y\bigr)
,\quad\text{and}\\
&(\Phiebi+\Phini)\qi(\p)\Jpb\psi,\ (\Phiebi-\Phini)\qi(\p)\Jpb\in\sL\l(\CneCm,Y\r)
\end{align*}
are well-defined. Hence, it suffices to show that the operator
\begin{align*}
&U_{\tn}:\rW^{1,p}_0\bigl([0,\tn],\CC^k\bigr)\subset\rL^p\bigl([0,\tn],\CC^k\bigr)\to\rL^p\bigl([0,\tn],Y\bigr),\\
&(U_{\tn} u)(t):=\Phi\,\uh\bigl(t-\theta(\p)\bigr),\ t\in[0,\tn],
\end{align*}
is well-defined and has a bounded extension in $\sL(\rL^{p}([0,\tn],\CC^k),\rL^{p}([0,\tn],Y))$ where for the
\begin{itemize}
\item external part $k=\ell$, $\Phi\in\sL(\rC_0(I,\CC^k),Y)$ and $\theta(s):=s$, $s\in I:=\RRp$, 
\item internal part $k=m$, $\Phi\in\sL(\rC(I,\CC^k),Y)$ and $\theta(s):=\tfrac s{\,\cb\,}$, $s\in I:=[0,1]$.
\end{itemize}

In both cases the assumption $u(0)=0$ implies that the function $\uh(t-\theta(\p))\in\rW^{1,p}(I,\CC^k)$ has compact support and hence $\Phi\,\uh(t-\theta(\p))$ is well-defined for all $t\in[0,\tn]$. Moreover, since $\uh|_{(-\infty,\tn]}$ is uniformly continuous, the map%
\footnote{Note that by definition $\rC_0([0,1],\CC^k)=\rC([0,1],\CC^k)$.}
$[0,\tn]\ni t\mapsto\uh(t-\theta(\p))\in\rC_0(I,\CC^k)$ is continuous and therefore $[0,\tn]\ni t\mapsto\Phi\,\uh(t-\theta(\p))\in Y$ is continuous as well. Summing up, this shows that the operator $U_{\tn}$ is well-defined.

\smallbreak
Next we verify that $U_{\tn}$ is bounded. Since $\Phi\in\sL(\rC_0(I,\CC^k),Y)$ and $Y$ is finite dimensional, by the Riesz--Markov representation theorem there exists a function $\eta:I\to\sL(\CC^k,Y)$ of bounded variation such that $\Phi$ is given by the Riemann--Stieltjes integral
\begin{equation}\label{eq:Phi-generic-e}
\Phi h=\int_I d\eta(s)\, h(s)
\quad\text{for all}\quad
h\in\rW^{1,p}(I,\CC^k).
\end{equation}
Then by Hölder's inequality and Fubini's theorem we conclude for $u\in\rW_0^{1,p}([0,\tn],\CC^k)$ that
\begin{align*}
\|U_{\tn} u\|_p^p&=
\int_{0}^{\tn}\bigl\|\Phi\, \uh\bigl(t-\theta(\p)\bigr)\bigr\|_{Y}^p\dt\\\notag
&=\int_0^{\tn}\Bigl\|\int_Id\eta(s)\,\uh\bigl(t-\theta(s)\bigr)\Bigr\|_{Y}^p\dt\\\notag
&\le\int_0^{\tn}\Bigl(\int_I\bigl\| \uh\bigl(t-\theta(s)\bigr)\bigr\|_{\CC^k}\,d|\eta|(s)\Bigr)^p\dt\\\notag
&\le\bigl(|\eta|(I)\bigr)^{p-1}\cdot\int_0^{\tn}\int_I\bigl\| \uh\bigl(t-\theta(s)\bigr)\bigr\|_{\CC^k}^p\,d|\eta|(s)\dt\\\notag
&\le\|\eta\|^{p-1}\cdot\int_I\int_0^{\tn}\bigl\| \uh\bigl(t-\theta(s)\bigr)\bigr\|_{\CC^k}^p\dt\,d|\eta|(s)\\
&\le\|\eta\|^{p}\cdot\|u\|^p_{p},
\end{align*}
where $|\eta|:I\to\RRp$ denotes the positive Borel measure defined by the total variation of 
$\eta$ and $\|\eta\|:=|\eta|(I)$. Since $\rW^{1,p}_0([0,\tn],\CC^k)$ is dense in $\rL^{p}([0,\tn],\CC^k)$ this implies that $U_{\tn}$ has a unique bounded extension as claimed.
\end{proof}

\subsection{The main result}

We are now ready to state our main generation result. 

\begin{thm}\label{thm:gen-Lp}
Let \autoref{asu:s-asu-Lp}  be satisfied.
If there exists $\tn>0$ such that the operator $\sRtn\in\sL(\rL^p([0,\tn],\CC^{\ell}\times\CC^m\times\CC^m))$ given by \eqref{eq:def-sR} is invertible,
then the operator $G$ defined in \eqref{eq:def-G-general} generates a cosine family on $X=\LpRpCl\times\LpneCm$ with phase space $V\times X$ for $V:=\ker(\Phin)$.
\end{thm}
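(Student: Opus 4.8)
The plan is to realise $G$, up to similarity and lower--order perturbations, as the \emph{square} of a first--order operator $\sG$ generating a group, and then to invoke the principle that the square of a group generator generates a cosine family, cf.\ \cite[Expl.~3.14.15]{ABHN:11}. The natural ``square root'' is the transport operator formally given by $c(\p,\p)\cdot\tfrac{d}{ds}$ on $X$, since $c(\p,\p)^2=a(\p,\p)$ and hence $\bigl(c(\p,\p)\tfrac{d}{ds}\bigr)^2=a(\p,\p)\tfrac{d^2}{ds^2}+c(\p,\p)c'(\p,\p)\tfrac{d}{ds}$ differs from $G$ only by a first--order term. I would first remove the space dependence of the coefficients: conjugating by the multiplication operator $q(\p,\p)$ diagonalises $a(\p,\p)$, and conjugating by the bi-Lipschitz reparametrisations $\Jp,\Jpb$ of \eqref{eq:def-Q_phi} turns the variable ``speeds'' into constant ones (this is the content of \autoref{lem:rearrange-sG}), at the cost of a bounded multiplication term and of a first--order term that is bounded as a map from the first component $V$ of the phase space into $X$. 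Since both similarities by bounded invertible operators and perturbations bounded from the phase space preserve the generation of a cosine family, it suffices to treat the resulting constant--coefficient operator; the boundary operator $B$ in \eqref{eq:def-G-general} enters only through such a phase--space--bounded perturbation of the boundary condition, which is why the conclusion is independent of $B$.

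Next I would show that the associated first--order operator $\sG$ --- the transport operator carrying the unperturbed ($B=0$) vertex conditions $\Phin f=0$, $\Phieb(\sG f)=0$ in its domain --- generates a $C_0$--semigroup on $X$. This is where \autoref{thm:pert-bc} enters: one starts from the maximal transport operator $\sA$, whose semigroup consists of the (nilpotent) shift operators on the external and internal edges encoded by $\Qt,\Rt,\St$ in \eqref{eq:def-Rt-Sr}, and perturbs its domain so as to impose the vertex conditions. The abstract perturbation result reduces the generation of $\sG$ to the invertibility of a single bounded operator obtained by evaluating the boundary functionals $\Phin$ and $\Phieb$ on the shifted data over the window $[0,\tn]$; assembling the external contribution ($\Qt$) and the two internal directions ($\Rt$ and its reflection $\St=\psi\Rt$), and splitting into the $\pm$--components corresponding to the two characteristics, yields exactly the operator $\sRtn$ of \eqref{eq:def-sR}, whose boundedness was established in \autoref{lem:bdd-ext}. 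Thus the standing hypothesis --- invertibility of $\sRtn$ for some $\tn>0$ --- is precisely the condition making \autoref{thm:pert-bc} applicable, and $\sG$ is a generator.

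To upgrade this semigroup to a group I would exhibit a bounded invertible operator conjugating $\sG$ to $-\sG$. On the internal edges this reversal is implemented by the reflection $\psi$, $(\psi f)(\p)=f(1-\p)$, which changes the sign of the derivative, together with the sign change already visible in the $\pm$--block structure of \eqref{eq:def-sR}; the external half--line contributions are handled through the same identification of incoming and outgoing characteristics. Granting $\sG\sim-\sG$, the operator $\sG$ generates a group, so $\sG^2$ generates a cosine family by \cite[Expl.~3.14.15]{ABHN:11}, and undoing the similarities and phase--space--bounded perturbations of the first step transfers this to $G$. As is typical for second--order problems, the position component of the phase space inherits only the stable boundary condition $\Phin f=0$, so that the phase space comes out as $V\times X$ with $V=\ker(\Phin)$ as asserted; by \cite[Thm.~3.14.17]{ABHN:11}, $G$ then also generates an analytic semigroup of angle $\tfrac\pi2$.

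The main obstacle I expect is the second step: verifying that the abstract invertibility condition of \autoref{thm:pert-bc} is exactly the invertibility of $\sRtn$. This requires computing the shift semigroup of $\sA$ explicitly, tracking how $\Phin$ and $\Phieb$ act on the shifted (and reflected) data, and correctly organising the result into the block form of \eqref{eq:def-sR}. A secondary delicate point is the group argument: making the reversal $\sG\sim-\sG$ precise so that it is compatible both with the genuinely non--reversible transport on the external half--lines and with the vertex coupling, and not merely with the reflectable internal edges.
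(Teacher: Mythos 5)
Your high-level strategy (square of a group generator, domain perturbation via \autoref{thm:pert-bc}, reduction to the invertibility of $\sRtn$, bounded perturbations absorbing the first-order terms, the conjugations by $q(\p,\p)$ and $\Jp,\Jpb$, and the treatment of $B$) matches the paper, but the central construction has a genuine gap: you take the square root of $G$ to be a \emph{single} transport operator $\sG=c(\p,\p)\tdds$ on $X$ whose domain carries \emph{both} families of vertex conditions $\Phin f=0$ and $\Phieb(\sG f)=0$. This cannot work, for two reasons. First, the domain of its square is wrong: $f\in D(\sG^2)$ forces $\sG f\in D(\sG)$ and hence the two \emph{additional} conditions $\Phin(cf')=0$ and $\Phieb\bigl(c(cf')'\bigr)=0$, so $\sG^2$ is a strict restriction of $G$ (it carries $2(\ell+2m)$ boundary conditions instead of $\ell+2m$), and a cosine family for $\sG^2$ would not transfer to $G$. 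Second, $\sG$ itself is over-determined ($\ell+2m$ conditions for a system of $\ell+m$ first-order equations, all with positive speed, which admits only $m$ free parameters in the resolvent equation), and, as you yourself suspect, it cannot be similar to $-\sG$: on the external half-lines the left shift on $\rL^p(\RRp)$ is not invertible and no reflection is available, so the group property is out of reach in your setting.

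The paper avoids both problems by doubling the state space: on $\sX=X\times X$ it takes the anti-diagonal operator matrix $\sG=\begin{psmallmatrix}0&D_\Phin\\ D_\Phieb&0\end{psmallmatrix}$, where $D_\Phin$ and $D_\Phieb$ are copies of $c(\p,\p)\tdds$ each carrying only \emph{half} of the boundary conditions ($\ker\Phin$ resp.\ $\ker\Phieb$). Then $\sG^2=\diag\bigl(D_\Phin D_\Phieb,\,D_\Phieb D_\Phin\bigr)$, and the lower entry $D_\Phieb D_\Phin$ has exactly the domain $D(G)$ for $B=0$ (modulo the phase-space-bounded term $\tfrac{\lambda'}2\tdds$), while $\sG\sim-\sG$ holds trivially via conjugation with $\diag(\Id,-\Id)$ thanks to the off-diagonal block structure --- no reflection of the half-lines is needed. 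This doubling is also what produces the two characteristic directions per internal edge and the $\pm$-components that you correctly anticipate in $\sRtn$; without it the reduction to \eqref{eq:def-sR} cannot be organised. If you replace your single operator by this operator matrix, the remainder of your outline (\autoref{lem:rearrange-sG}, the shift semigroups, \autoref{thm:pert-bc-v2}, and the perturbation arguments for $\lambda'$, $q$ and $B$) goes through essentially as you describe.
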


The proof is split into four parts where in the first three we assume $B=0$. We start by showing the result under the hypothesis that the operator matrix $\sG$ in \eqref{eq:def-sG} below generates a semigroup. Then, using a series of lemmas we give the proof that $\sG$ indeed is a generator, first in case $q(\p,\p)\equiv\IId$, then for general $q(\p,\p)$. Finally, we prove the result for $B\ne0$.

\begin{proof}[Proof of \autoref{thm:gen-Lp}, 1$^{\text{st}}$  part]  Assume that $B=0$ and $q(\p,\p)\equiv\IId$. Hence, $a(\p,\p)=\lambda(\p,\p)$ and $c(\p,\p)=\mu(\p,\p)$ are diagonal matrices. By $\lambda'$ we denote the derivative of the corresponding diagonal entries, i.e.,
\[\lambda'(\p,\p):= \diag\l((\ae)'(\p),(\ai)'(\p)\r).\]

On $\sX:=X\times X$ we consider the operator matrix
\begin{equation}\label{eq:def-sG}
\sG:=\begin{pmatrix}
0&D_\Phin\\D_\Phieb&0
\end{pmatrix},
\quad
D(\sG):=D(D_\Phieb)\times D(D_\Phin),
\end{equation}
where
\begin{equation*}
\begin{aligned}
&D_\Phin:=c(\p,\p)\cdot\tdds,\  &D(D_\Phin):=\bigl\{g\in\WepRpCl\times\WepneCm:\Phin\, g=0\bigr\}=\ker(\Phin),\\
&D_\Phieb:=c(\p,\p)\cdot\tdds,\  &D(D_\Phieb):=\bigl\{f\in\WepRpCl\times\WepneCm:\Phieb f=0\bigr\}=\ker(\Phieb).
\end{aligned}
\end{equation*}

Then $\sG$ and $-\sG$ are similar via the similarity transformation induced by $\diag(\Id,-\Id)$. 
Hence, if for the time being we assume that $\sG$ generates a $C_0$-semigroup, by  \cite[Sect.~II.3.11]{EN:00}  it already generates a group. By \cite[Expl.~3.14.15]{ABHN:11} this implies that $\sG^2$ generates a cosine family with phase space $\sV\times\sX$ for $\sV:=[D(\sG)]$. However,  $\sG^2$ is given by the diagonal matrix with diagonal domain
\begin{equation}\label{eq:def-sG2}
\sG^2:=\begin{pmatrix}
D_\Phin D_\Phieb&0\\0&D_\Phieb D_\Phin
\end{pmatrix},
\quad
D(\sG^2):=D(D_\Phin D_\Phieb)\times D(D_\Phieb D_\Phin).
\end{equation}
Hence, $\Gt:=D_\Phieb D_\Phin$  generates a cosine family with phase space $V\times X$ for $V:=[D(D_\Phin)]=\ker(\Phin)$.
Since $\lambda$ is Lipschitz continuous, $\lambda'\in\rL^\infty(\RRp\times [0,1],\rM_{\ell+m}(\CC))$ induces a bounded multiplication operator on $X$ and therefore $P:=\frac{\lambda'}2\cdot\dds\in\sL(V,X)$. However, by \autoref{cor:reg-prod}, $D(G)=D(\Gt)$ and $G=\Gt-P$, hence by \cite[Cor.~3.14.13]{ABHN:11} it follows that $G$ generates a cosine family with phase space $V\times X$ as claimed. 
\end{proof}

Next we verify the generator property of the matrix $\sG$.  To do so we proceed in several steps. First we assume again that 
$c(\p,\p)=\diag(\ce(\p),\ci(\p))$ is diagonal, i.e.
 $q(\p,\p)\equiv\IId$. The case of general $q(\p,\p)$ as in \eqref{eq:rep-a(s)} then follows by similarity and bounded perturbation.

\smallbreak
We start by simplifying $\sG$ by rearranging the coordinates of $\sX$ and by normalizing the matrices $\ce(\p),\ci(\p)$. Recall that $\Cb$ is defined in \eqref{eq:def-varphi-vectors} and
$\Jp\in\sL(\Xe)$, $\Jpb\in\sL(\Xin)$ are given by \eqref{eq:def-Q_phi}.

\begin{lem}\label{lem:rearrange-sG}
Let $q(\p,\p)=\IId$. Then the operator matrix $\sG$ on $\sX=X\times X=(\Xe\times\Xin)\times(\Xe\times\Xin)$ given in \eqref{eq:def-sG} is similar to $\sGr$ on $\sXr:=\sXe\times\sXi=(\Xe\times\Xe)\times(\Xin\times\Xin)$ where
\begin{equation}\label{eq:def-sGr}
\sGr:=
\diag\Bigl(\tdds,-\tdds,\Cb\cdot\tdds,-\Cb\cdot\tdds\Bigr),
\quad
D(\sGr):=\ker(\Phix)
\end{equation}
and
\begin{align}\label{eq:def-PHIx}
\Phix&:=\bigl((\Phiebe+\Phine)\cdot \Jp,(\Phiebe-\Phine)\cdot \Jp,(\Phiebi+\Phini)\cdot \Jpb,(\Phiebi-\Phini)\cdot \Jpb\bigr)\\\notag
&\in\sL\bigl(\CnRpCl\times\CnRpCl\times\CneCm\times\CneCm,Y\bigr).
\end{align}

\end{lem}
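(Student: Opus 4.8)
The plan is to realise the similarity as a composition $T:=\sM^{-1}S$ of two bounded isomorphisms $\sX\to\sXr$, and then to verify separately that $T$ intertwines the operators and matches the domains. First I would introduce the rearrange-and-diagonalise map $S\colon\sX\to\sXr$,
\[
S\bigl((f^e,f^i),(g^e,g^i)\bigr):=\bigl((f^e+g^e,\,f^e-g^e),\,(f^i+g^i,\,f^i-g^i)\bigr),
\]
which is clearly bounded with bounded inverse, together with the normalisation $\sM:=\diag(\Jp,\Jp,\Jpb,\Jpb)$, a bounded isomorphism of $\sXr$ by the remark following \eqref{eq:def-Q_phi}. Since $q(\p,\p)=\IId$ the coefficient $c(\p,\p)=\diag(\ce(\p),\ci(\p))$ is diagonal, so $\sG(f,g)=\bigl((\ce(g^e)',\ci(g^i)'),(\ce(f^e)',\ci(f^i)')\bigr)$; forming sums and differences of the external resp.\ internal components produces $\ce(f^e\pm g^e)'$ and $\ci(f^i\pm g^i)'$ with sign pattern $(+,-,+,-)$, whence
\[
S\,\sG\,S^{-1}=\diag\bigl(\ce\cdot\tdds,\,-\ce\cdot\tdds,\,\ci\cdot\tdds,\,-\ci\cdot\tdds\bigr).
\]

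The heart of the argument is the intertwining of these variable-coefficient operators by $\Jp$ and $\Jpb$. Using the chain rule together with $(\phiek)'=1/\ce_k$ and $(\phib_j)'=\cb_j/\cij$ (see \eqref{eq:def-phik} and \eqref{eq:def-varphi-j}), I would establish
\[
(\ce\cdot\tdds)\,\Jp=\Jp\,\tdds
\quad\text{on }\WepRpCl,
\qquad
(\ci\cdot\tdds)\,\Jpb=\Cb\,\Jpb\,\tdds
\quad\text{on }\WepneCm.
\]
Since the constant diagonal matrix $\Cb$ commutes with $\Jpb$, conjugating the previous diagonal operator by $\sM$ then yields $\sGr=\diag\bigl(\tdds,-\tdds,\Cb\cdot\tdds,-\Cb\cdot\tdds\bigr)$, i.e.\ $\sGr=T\sG T^{-1}$ on the level of operators. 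Here one also uses that $\Jp,\Jpb$, being induced by the Lipschitz homeomorphisms $\varphi^e,\phib$ with Lipschitz inverses, restrict to isomorphisms of $\WepRpCl$ and $\WepneCm$, so that $T$ transports the $\rW^{1,p}$-domain of $\sG$ exactly onto that of $\sGr$.

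It remains to identify the boundary conditions. Writing $(v_1,v_2,v_3,v_4):=T(f,g)$, one has $\Jp(v_1+v_2)=2f^e$, $\Jp(v_1-v_2)=2g^e$, $\Jpb(v_3+v_4)=2f^i$, $\Jpb(v_3-v_4)=2g^i$, so the defining conditions $\Phieb f=\Phiebe f^e+\Phiebi f^i=0$ and $\Phin g=\Phine g^e+\Phini g^i=0$ of $D(\sG)$ become
\[
\Phiebe\Jp(v_1+v_2)+\Phiebi\Jpb(v_3+v_4)=0,
\qquad
\Phine\Jp(v_1-v_2)+\Phini\Jpb(v_3-v_4)=0,
\]
the first lying in $Y_1$, the second in $Y_0$. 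Since $Y=Y_0\oplus Y_1$, this pair of conditions is equivalent to the single identity $\Phix(v_1,v_2,v_3,v_4)=0$ with $\Phix$ as in \eqref{eq:def-PHIx}, because the $Y_1$- and $Y_0$-components of $\Phix$ are precisely the two left-hand sides above. This yields $T\bigl(D(\sG)\bigr)=\ker(\Phix)=D(\sGr)$ and completes the proof.

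I expect the main obstacle to be the rigorous justification of the intertwining identities, that is, the chain rule for $\rW^{1,p}$-functions composed with the Lipschitz diffeomorphisms $\varphi^e,\phib$, together with the verification that $\Jp,\Jpb$ restrict to isomorphisms of the relevant Sobolev spaces (so that \emph{domains}, and not merely the formal operator actions, are correctly transported). The diagonalisation via $S$ and the subsequent boundary-condition bookkeeping are then routine.
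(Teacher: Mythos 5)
Your proof is correct and follows essentially the same route as the paper: your transformation $T=\sM^{-1}S$ is exactly the paper's $\sS^{-1}$ (the paper packages the sum/difference rearrangement and the substitutions $\Jp,\Jpb$ into one operator matrix $\sS=\frac12\begin{psmallmatrix}\Jp&\Jp&0&0\\0&0&\Jpb&\Jpb\\\Jp&-\Jp&0&0\\0&0&\Jpb&-\Jpb\end{psmallmatrix}$), the intertwining identities you state are equivalent to the paper's $\Jp\cdot\tdds\cdot\Jpi=\ce(\p)\cdot\tdds$ and $\Jpb\cdot\Cb\cdot\tdds\cdot\Jpbi=\ci(\p)\cdot\tdds$, and the identification of $\ker(\Phix)$ via the decomposition $Y=Y_0\oplus Y_1$ is the same argument. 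Your explicit remarks on transporting the $\rW^{1,p}$-domains under the bi-Lipschitz substitutions are a slightly more careful rendering of a step the paper leaves implicit, but the proof is the same.
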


\begin{proof} Consider the invertible transformation
\begin{align*}
\sS&:=\tfrac1{2}\cdot
\begin{psmallmatrix}
\Jp&\Jp&0&0\\
0&0&\Jpb&\Jpb\\
\Jp&-\Jp&0&0\\
0&0&\Jpb&-\Jpb
\end{psmallmatrix}
\in\sL\bigl(\sXr,\sX\bigr)
\qquad\text{with inverse}\\
\sS^{-1}&:= 
\begin{psmallmatrix}
\Jpi&0&\Jpi&0\\
\Jpi&0&-\Jpi&0\\
0&\Jpbi&0&\Jpbi\\
0&\Jpbi&0&-\Jpbi
\end{psmallmatrix}
\in\sL\bigl(\sX,\sXr\bigr).
\end{align*}
We claim that
\begin{equation}\label{eq:sim-sG-sGr}
\sGr=\sS^{-1}\cdot\sG\cdot\sS.
\end{equation}
Since $\rg(\Phine,\Phini)\subseteq Y_0$, $\rg(\Phiebe,\Phiebi)\subseteq Y_1$  and $Y_0\cap Y_1=\{0\}$ we have
$D(\sG)=\ker(\Phiebe,\Phiebi,\Phine,\Phini)$. Using this a simple computation shows that $D(\sS^{-1}\sG\sS)=\ker((\Phiebe,\Phiebi,\Phine,\Phini)\cdot\sS)=D(\sGr)$.
Next $(\phi^{-1})'=\ce(\p)\circ\phi^{-1}$ implies 
\begin{equation*}
\Jp\cdot\tdds\cdot \Jp^{-1}=
\ce(\p)\cdot\tdds.
\end{equation*}
Similarly, since $(\Cb\cdot(\phib)^{-1})'=\ci(\p)\circ(\phib)^{-1}$ we obtain
\begin{equation*}
\Jpb\cdot\Cb\cdot\tdds\cdot \Jpb^{-1}=
\ci(\p)\cdot\tdds.
\end{equation*}
This implies that $\sGr\binom{\fin}{\fe}=\sS^{-1}\sG\sS\binom{\fin}{\fe}$ for $\binom{\fin}{\fe}\in D(\sGr)$ which completes the proof of \eqref{eq:sim-sG-sGr}.
\end{proof}

We now represent $\sGr$ as a domain perturbation of a simpler generator $\sA$ which can be treated by a (slight modification of a) recent perturbation result from \cite{ABE:13} (see \autoref{thm:pert-bc}). 

Thanks to \autoref{lem:rearrange-sG} we can consider the external and internal part separately.

\subsubsection*{External Part.} We introduce on $\Xe=\LpRpCl$ the operators
\begin{alignat}{3}
\Dme&:=\tdds,&\qquad&D(\Dme):=\WepRpCl,\\
\Dne&:=\tdds,&&D(\Dne):=\bigl\{f\in D(\Dme):f(0)=0\bigr\}\label{eq:def-Dne},
\end{alignat}
and define on $\sXe=\Xe\times\Xe$ the operator matrices
\begin{alignat}{3}
\sAme&:=
\diag\bigl(\Dme,-\Dme\bigr)
\label{eq:def-sAme}
,&\qquad&D(\sAme):=D(\Dme)\times D(\Dme),\\
\sAe&:=
\diag\bigl(\Dme,-\Dne\bigr)
,&&D(\sAe):=D(\Dme)\times D(\Dne).\label{eq:def-sAe}
\end{alignat}

Note that $\Dme$ and $-\Dne$ generate the strongly continuous left- and right-shift semigroups $\Tlet$ and $\Tret$ on $\Xe$, respectively, given by
\begin{equation}\label{eq:def-Telr}
\bigl(\Tle(t)f\bigr)(\p):=f(\p+t),
\qquad
\bigl(\Tre(t)f\bigr)(\p):=
\fh(\p-t),
\end{equation}
where $\fh$ denotes the extension of the function $f:\RRp\to\CC^{\ell}$ to $\RR$ by the value $0$. 
This gives immediately the following result.

\begin{lem}\label{lem:sAe-gen}
The operator $\sAe$ defined in \eqref{eq:def-sAe} generates a $C_0$-semigroup $\sTet$ given by
\begin{equation}\label{eq:def-sTt-e}
\sTe(t)=
\diag\bigl(\Tle(t),\Tre(t)\bigr),\quad t\ge0.
\end{equation}
\end{lem}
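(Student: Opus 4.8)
The plan is to recognize the statement as the standard fact that a diagonal operator matrix on a product space generates the diagonal $C_0$-semigroup assembled from the semigroups generated by its entries. Indeed, the paragraph preceding the lemma already records that $\Dme$ generates the left-shift semigroup $\Tlet$ and $-\Dne$ generates the right-shift semigroup $\Tret$ on $\Xe$, both of which are classical (see \cite{EN:00}). It therefore remains only to transfer these two componentwise facts to the $2\times2$ diagonal matrix $\sAe$ on $\sXe=\Xe\times\Xe$.

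First I would define the candidate family $\sTe(t):=\diag\bigl(\Tle(t),\Tre(t)\bigr)$ for $t\ge0$ and verify that $\sTet$ is a $C_0$-semigroup on $\sXe$. All three axioms reduce to the corresponding properties of the two diagonal entries: $\sTe(0)=\diag(\Id,\Id)$ is the identity on $\sXe$, the functional equation $\sTe(t+s)=\sTe(t)\sTe(s)$ holds because it holds in each slot, and strong continuity of $t\mapsto\sTe(t)\binom{f}{g}=\binom{\Tle(t)f}{\Tre(t)g}$ follows from strong continuity of $\Tlet$ and $\Tret$ together with the fact that $\sXe$ carries the product topology.

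Next I would identify the generator of $\sTet$. A pair $\binom{f}{g}\in\sXe$ lies in the domain of the generator precisely when the strong limit $\lim_{t\downarrow0}\tfrac1t\bigl(\sTe(t)\binom{f}{g}-\binom{f}{g}\bigr)$ exists in $\sXe$. Since the norm on $\sXe$ is equivalent to the maximum of the two component norms, this limit exists if and only if both component limits $\lim_{t\downarrow0}\tfrac1t(\Tle(t)f-f)$ and $\lim_{t\downarrow0}\tfrac1t(\Tre(t)g-g)$ exist, that is, if and only if $f\in D(\Dme)$ and $g\in D(\Dne)$; in that case the limit equals $\binom{\Dme f}{-\Dne g}$. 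Hence the generator is exactly $\diag(\Dme,-\Dne)=\sAe$ with domain $D(\Dme)\times D(\Dne)=D(\sAe)$, which is precisely formula \eqref{eq:def-sTt-e}.

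The only point needing (a trivial) justification is the commutation of the limit with the product structure in the last step; because $\sXe$ is a finite product with the product topology, no real obstacle arises, and this is why the authors flag the result as immediate. If one prefers a resolvent-based argument, an equally short route is to note that $R(\lambda,\sAe)=\diag\bigl(R(\lambda,\Dme),R(\lambda,-\Dne)\bigr)$ for $\Re\lambda$ large, so the Hille--Yosida estimates for $\sAe$ follow at once from those already available for the two diagonal entries.
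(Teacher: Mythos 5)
Your proposal is correct and matches the paper's (implicit) argument: the paper offers no written proof, stating only that the lemma follows ``immediately'' from the componentwise facts that $\Dme$ generates $\Tlet$ and $-\Dne$ generates $\Tret$, which is exactly the diagonal-matrix reasoning you spell out. Both your direct identification of the generator via componentwise difference quotients and your alternative resolvent route are valid ways to make the word ``immediately'' precise.
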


Note that in the context of \autoref{sec:dom-pert} we have $\sAe\subset\sAme$ with domain
\begin{align*}
D(\sAe)&=\Bigl\{\tbinom{f}{g}\in D(\sAme):\sLe\tbinom{f}{g}=0\Bigr\}=\ker(\sLe)
\end{align*}
for
\begin{equation}\label{eq:def-sL-e}
\sLe:=(0,\delta_0)\in\sL\bigl([D(\sA_m)],\partial\sXe\bigr),
\end{equation}
where $\delta_0$ denotes the point evaluation in $s=0$ and  $\partial\sXe:=\CC^{\ell}$. 
Now the following follows easily by inspection.

\begin{lem}\label{lem:BCS-wp-e}
Let the operators $\sAme$ and $\sLe$ be defined by \eqref{eq:def-sAme} and \eqref{eq:def-sL-e}, respectively. 
Then for $\tn>0$ and given $u\in\rW^{2,p}_0([0,\tn],\partial\sXe)$ the function $\sx:[0,\tn]\to\sXe=\Xe\times\Xe$ defined by
\begin{equation}\label{eq:sol-BCS-e}
\sx(t,\p):=
\bigl(0,\uh(t-\p)\bigr)^\top
\end{equation}
is a classical solution of the boundary control system
\begin{equation}
%\tag{BCS$^e$}
\label{eq:BCSe}
\begin{cases}
\dot\sx(t)=\sAme\sx(t),&0\le t\le\tn,\\
\sLe\sx(t)=u(t),&0\le t\le\tn,\\
\sx(0)=0.
\end{cases}
\end{equation}
\end{lem}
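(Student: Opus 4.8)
The plan is to verify directly that the explicit candidate $\sx(t,\p)=\bigl(0,\uh(t-\p)\bigr)^\top$ meets every requirement of a classical solution of the boundary control system \eqref{eq:BCSe}, namely that $\sx\in\rC^1([0,\tn],\sXe)$ with $\sx(t)\in D(\sAme)$ for each $t$, and that the differential equation, the boundary identity, and the initial condition all hold. Since the hypothesis $u\in\rW^{2,p}_0([0,\tn],\partial\sXe)$ is precisely what supplies the regularity needed for these checks, I would start from it.

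First I would note that the vanishing boundary data encoded in $\rW^{2,p}_0([0,\tn],\CC^{\ell})$ (that is, $u$ and $u'$ vanish at the endpoints) ensure that the zero-extension $\uh$ belongs to $\rW^{2,p}(\RR,\CC^{\ell})$, with no jump in $\uh$ or $\uh'$. For each fixed $t$ the second component $\p\mapsto\uh(t-\p)$ is then a reflection–translation of $\uh$, hence lies in $\WzpRpCl\subset\WepRpCl=D(\Dme)$; together with the trivial first component this gives $\sx(t)\in D(\sAme)=D(\Dme)\times D(\Dme)$. Moreover, since translation is strongly continuous on $\rL^p$ and strongly differentiable there for $\rW^{1,p}$-data, the map $t\mapsto\uh(t-\p)$ is of class $\rC^1$ from $[0,\tn]$ into $\Xe$ with derivative $t\mapsto\uh'(t-\p)$, and $t\mapsto\sAme\sx(t)$ is continuous; thus $\sx$ is continuously differentiable with values in $\sXe$ and continuous with values in $[D(\sAme)]$.

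The computational core is the transport identity $\partial_t\,\uh(t-\p)=-\partial_\p\,\uh(t-\p)=\uh'(t-\p)$. On the one hand $\dot\sx(t)=\bigl(0,\uh'(t-\p)\bigr)^\top$; on the other hand, applying $\sAme=\diag(\Dme,-\Dme)$ yields $\sAme\sx(t)=\bigl(0,-\tfrac{d}{d\p}\uh(t-\p)\bigr)^\top=\bigl(0,\uh'(t-\p)\bigr)^\top$, where the minus sign attached to the second diagonal entry exactly cancels the sign produced by differentiating $\uh(t-\p)$ in $\p$. This establishes $\dot\sx(t)=\sAme\sx(t)$. The boundary identity is immediate from $\sLe=(0,\delta_0)$, since $\sLe\sx(t)=\delta_0\bigl[\uh(t-\p)\bigr]=\uh(t)=u(t)$ for $t\in[0,\tn]$, and the initial condition follows because $\sx(0)=\bigl(0,\uh(-\p)\bigr)^\top=0$, the function $\uh$ vanishing on $(-\infty,0)$ while $\p$ ranges over $\RRp$.

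The argument is thus a routine verification, and the only points needing a word of care—serving as the mild ``obstacle''—are that $\rW^{2,p}_0$ really produces an extension $\uh\in\rW^{2,p}(\RR,\CC^{\ell})$ rather than one with a jump in $\uh$ or $\uh'$, and that the $\rL^p$-differentiability of $t\mapsto\uh(t-\p)$ with derivative $\uh'(t-\p)$ is justified. Both are standard facts about Sobolev functions and the translation group on $\rL^p(\RR)$ for $1\le p<\infty$; the genuine content of the lemma is simply that the sign convention in $\sAme$ is compatible with the right-translation form of $\sx$.
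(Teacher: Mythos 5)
Your verification is correct and is precisely the direct check the paper intends: the lemma is stated with the remark that it ``follows easily by inspection'' and no further proof is given. The points you single out --- that the $\rW^{2,p}_0$ hypothesis makes the zero extension $\uh$ a global $\rW^{2,p}$ function with no jumps, the sign cancellation between $\partial_t$ and the $-\Dme$ entry, and the $\rC^1$-regularity of the translation orbit --- are exactly the substance of that inspection.
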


% % % % % % % internal anfang % % % % % % % %
\subsubsection*{Internal Part.} 
Recall that $\Cb:=\diag(\cb_1,\ldots,\cb_m)$ with $\cb_i$ defined in \eqref{eq:def-varphi-j}.
 Then we introduce on $\Xin=\LpneCm$ the operators
\begin{alignat}{3}
\Dmi&:=\Cb\cdot\tdds,&\qquad&D(\Dmi):=\WepneCm,\\
\Dni&:=\Cb\cdot\tdds,&&D(\Dni):=\bigl\{f\in D(\Dmi):f(0)=0\bigr\}\label{eq:def-D_0},\\
\Dei&:=\Cb\cdot\tdds,&&D(\Dei):=\bigl\{f\in D(\Dmi):f(1)=0\bigr\}\label{eq:def-D_1},
\end{alignat}
and define on $\sXi=\Xin\times\Xin$ the operator matrices
\begin{alignat}{3}
\sAmi&:=
\begin{pmatrix}
\Dmi&0\\0&-\Dmi
\end{pmatrix}\label{eq:def-sAmi}
,&\qquad&D(\sAmi):=D(\Dmi)\times D(\Dmi),\\
% % %
\sAi&:=
\begin{pmatrix}
\Dei&0\\0&-\Dni
\end{pmatrix}
,&&D(\sAi):=D(\Dei)\times D(\Dni).\label{eq:def-sAi}
\end{alignat}
Then $\Dei$ and $-\Dni$ generate the strongly continuous nilpotent left- and right-shift semigroups $\Tlit$ and $\Trit$ on $\Xin$, respectively, given by
\begin{equation}\label{eq:def-Tilr}
\bigl(\Tli(t)f\bigr)(\p):=\fh(\p+\cb\cdot t),
\qquad
\bigl(\Tri(t)f\bigr)(\p):=
\fh(\p-\cb\cdot t),
\end{equation}
where $\cb=(\cb_1,\ldots,\cb_m)$.
This gives immediately the following result.

\begin{lem}\label{lem:sAi-gen}
The operator $\sAi$ defined in \eqref{eq:def-sAi} generates a $C_0$-semigroup $\sTit$ given by
\begin{equation}\label{eq:def-sTti}
\sTi(t)=
\begin{pmatrix}
\Tli(t)&0\\0&\Tri(t)
\end{pmatrix},
\quad t\ge0.
\end{equation}
\end{lem}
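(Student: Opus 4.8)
The plan is to read $\sAi$ as the direct sum of its two diagonal blocks and to reduce the claim to the (standard) fact that $\Dei$ and $-\Dni$ generate the two nilpotent shift semigroups $\Tlit$ and $\Trit$ displayed in \eqref{eq:def-Tilr}. Once this is in hand, the block‑diagonal structure of $\sAi$ together with the product‑semigroup principle — the direct sum of two $C_0$‑semigroups is again a $C_0$‑semigroup on the product space, whose generator is the direct sum of the two generators (cf.\ \cite{EN:00}) — yields at once that $\sAi$ generates $\sTit=\diag(\Tlit,\Trit)$. This is the exact internal analogue of the argument already used for the external part in \autoref{lem:sAe-gen}.

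First I would verify that each of $\Tlit$ and $\Trit$ is a $C_0$‑semigroup on $\Xin$. Since every $\cb_j>0$, the maps $\p\mapsto\p\pm\cb_j t$ translate in a fixed direction, so the semigroup law follows from the additivity of translations, the compatibility of the extensions by zero being guaranteed by the definition of $\fh$; strong continuity is the usual continuity of translation in $\rL^p$. A feature that distinguishes the bounded interval $[0,1]$ from the half‑line is that both semigroups are moreover \emph{nilpotent}: for $t\ge 1/\min_j\cb_j$ every argument $\p\pm\cb_j t$ leaves $[0,1]$, so that $\Tli(t)=\Tri(t)=0$.

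Next I would identify the generators. For $f$ in the relevant $\WepneCm$‑class the difference quotient $\tfrac1t\bigl(\Tli(t)f-f\bigr)$ converges in $\Xin$ to $\Cb\cdot f'$, and $\tfrac1t\bigl(\Tri(t)f-f\bigr)$ to $-\Cb\cdot f'$, so both generators act as $\pm\Cb\cdot\tfrac{d}{ds}$. The one point that requires a little care — and the only real obstacle — is pinning down the domains precisely: one must show that the generator of $\Tlit$ is exactly $\Dei$, i.e.\ that its domain is $\{f\in\WepneCm:f(1)=0\}$, and that the generator of $\Trit$ is exactly $-\Dni$, with domain $\{f\in\WepneCm:f(0)=0\}$. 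The correct inflow boundary is forced by the direction of the shift: the left shift $\p\mapsto\p+\cb t$ feeds zeros in at $\p=1$, whence $f(1)=0$, while the right shift feeds zeros in at $\p=0$, whence $f(0)=0$; these are precisely the conditions for the difference quotient to converge in $\Xin$ rather than to blow up near the respective endpoint. To establish that the domain is \emph{all} of the stated subspace one can either invoke the standard description of the generator of a nilpotent translation semigroup, or compute the resolvent explicitly and check that $R(\lambda,\cdot)$ maps $\Xin$ onto the indicated boundary‑condition space. With the two blocks thus identified, assembling them gives the asserted $\sTit$.
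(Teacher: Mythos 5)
Your proposal is correct and follows exactly the route the paper takes: the paper simply records that $\Dei$ and $-\Dni$ generate the nilpotent left- and right-shift semigroups \eqref{eq:def-Tilr} and then reads off the lemma from the block-diagonal structure of $\sAi$, which is precisely your argument with the standard details (semigroup law, strong continuity, identification of the generators and their domains via the inflow boundary) written out. Your identification of the boundary conditions $f(1)=0$ for the left shift and $f(0)=0$ for the right shift, and of the nilpotency time $1/\min_j\cb_j$, is accurate.
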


As before we observe that in the context of \autoref{sec:dom-pert} we have $\sAi\subset\sAmi$ with domain
\begin{align*}
D(\sAi)&=\Bigl\{\tbinom{f}{g}\in D(\sAmi):\sLi\tbinom{f}{g}=0\Bigr\}=\ker(\sLi),
\end{align*}
for
\begin{equation}\label{eq:def-sL-sC}
\sLi:=
\begin{pmatrix}
\delta_1&0\\
0&\delta_0
\end{pmatrix}
\in\sL\bigl([D(\sAmi)],\partial\sXi\bigr),
\end{equation}
where $\delta_s$ denotes the point evaluation in $s\in\{0,1\}$ and  $\partial\sXi:=\CC^m\times\CC^m$.

\begin{lem}\label{lem:BCS-wp}
Let the operators $\sAmi$ and $\sLi$ be defined by \eqref{eq:def-sAmi} and \eqref{eq:def-sL-sC}, respectively. 
Then for $\tn:=\min\{\cbi_1,\ldots,\cbi_m\}>0$ and given $\sv=(v,w)^\top\in\rW^{2,p}_0([0,\tn],\partial\sXi)$ the function $\sx:[0,\tn]\to\sXi=\Xin\times\Xin$ defined by%
\begin{equation}\label{eq:sol-BCS}
\sx(t,s):=
\Bigl(\vh\bigl(t+\tfrac{s-1}\cb\bigr),\wh\bigl(t-\tfrac s{\,\cb\,}\bigr)\Bigr)^\top,
\quad t\in[0,\tn],\ s\in[0,1] 
\end{equation}
is a classical solution of the boundary control system
\begin{equation}
%\tag{BCS$^i$}
\label{eq:BCS}
\begin{cases}
\dot\sx(t)=\sAmi\sx(t),&0\le t\le\tn,\\
\sLi\sx(t)=\sv(t),&0\le t\le\tn,\\
\sx(0)=0.
\end{cases}
\end{equation}
\end{lem}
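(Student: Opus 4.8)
The plan is to verify directly, by differentiation along characteristics, that the transport-type ansatz \eqref{eq:sol-BCS} meets all three requirements of the boundary control system \eqref{eq:BCS}. This is completely parallel to the external case of \autoref{lem:BCS-wp-e}; the only extra bookkeeping comes from the vector of speeds $\cb=(\cb_1,\ldots,\cb_m)$, so every computation is performed componentwise in $j=1,\ldots,m$. I would organise the argument into a regularity step followed by the three verifications.

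For the regularity step, fix $t\in[0,\tn]$ and look at the $j$-th entry of the first component, $s\mapsto\vh_j\bigl(t+\tfrac{s-1}{\cb_j}\bigr)$. As $s$ runs through $[0,1]$ its argument runs through $[\,t-\cbi_j,\,t\,]$, and since $\tn\le\cbi_j$ by the choice of $\tn$ this interval lies in $(-\infty,\tn]$, where $\vh_j$ equals $v_j$ on $[0,\tn]$ and $0$ to the left. Because $\sv\in\rW^{2,p}_0$ forces $v_j(0)=0$, the zero-extension carries no jump where the characteristic enters the domain, so $s\mapsto\vh_j\bigl(t+\tfrac{s-1}{\cb_j}\bigr)$ belongs to $\rW^{1,p}[0,1]$; the same holds for the second component, whence $\sx(t)\in D(\Dmi)\times D(\Dmi)=D(\sAmi)$. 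The additional condition $v_j'(0)=w_j'(0)=0$, again from $\sv\in\rW^{2,p}_0$, upgrades $\vh_j',\wh_j'$ to $\rW^{1,p}$-functions, and $\rL^p$-continuity of translation then shows $t\mapsto\sx(t)$ is $C^1$ into $\sXi$ and continuous into $[D(\sAmi)]$, i.e. $\sx$ is a classical trajectory.

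The three equations are then immediate. For the differential equation the chain rule gives componentwise
\begin{equation*}
\partial_t\,\vh_j\bigl(t+\tfrac{s-1}{\cb_j}\bigr)=\vh_j'\bigl(t+\tfrac{s-1}{\cb_j}\bigr)=\cb_j\,\partial_s\,\vh_j\bigl(t+\tfrac{s-1}{\cb_j}\bigr),
\end{equation*}
which is exactly the first row $\Dmi\sx_1$ of $\sAmi\sx$, while the second entry produces the opposite sign and reproduces $-\Dmi\sx_2$. The boundary condition follows by evaluating at the control points: $\delta_1\sx_1(t)=\vh(t)=v(t)$ and $\delta_0\sx_2(t)=\wh(t)=w(t)$ for $t\in[0,\tn]$, so $\sLi\sx(t)=(v(t),w(t))^\top=\sv(t)$. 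Finally, at $t=0$ each argument $\tfrac{s-1}{\cb_j}$ and $-\tfrac{s}{\cb_j}$ is $\le0$, where $\vh_j$ and $\wh_j$ vanish, giving $\sx(0)=0$.

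The only genuinely delicate point — and the one I would treat most carefully — is the regularity step: one must check that the zero-extensions introduce no spurious jump or kink where the characteristics cross the lines $t+\tfrac{s-1}{\cb_j}=0$ and $t-\tfrac{s}{\cb_j}=0$. This is precisely what the vanishing of $v_j,w_j$ and their first derivatives at $0$ (encoded in $\sv\in\rW^{2,p}_0$) guarantees, and it explains why $\tn=\min\{\cbi_1,\ldots,\cbi_m\}$ is imposed: on $[0,\tn]\times[0,1]$ no characteristic can reach the opposite, uncontrolled endpoint, so the single boundary datum at $s=1$ (resp. $s=0$) together with the zero initial value determines $\sx$ consistently. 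Everything else is routine differentiation.
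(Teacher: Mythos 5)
Your verification is correct and is exactly the direct inspection the paper has in mind (the paper states this lemma, like its external counterpart \autoref{lem:BCS-wp-e}, without proof as following "by inspection"): the chain-rule computation along characteristics, the evaluation $\delta_1 x_1(t)=v(t)$, $\delta_0 x_2(t)=w(t)$, the vanishing at $t=0$, and the regularity coming from $v(0)=v'(0)=w(0)=w'(0)=0$ are all as intended. Your closing remark correctly identifies the role of the zero-extension compatibility; only note that the restriction $\tn=\min\{\cbi_1,\ldots,\cbi_m\}$ is not actually needed for the solution formula itself (the arguments never exceed $\tn$), but rather for the later identification of the controllability map in \eqref{eq:Rt-St-01} — a harmless over-interpretation.
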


We are now well-prepared to continue the proof of our main result. 

\begin{proof}[Proof of \autoref{thm:gen-Lp}, 2$^{\text{nd}}$ part]
We show that $\sGr$ given by \eqref{eq:def-sGr} generates a semigroup on $\sXr:=\sXe\times\sXi$. By \autoref{lem:rearrange-sG} and the first part  of the proof this proves \autoref{thm:gen-Lp} in case $q(\p,\p)=\IId$ and $B=0$.

\smallbreak
For the operators $\sAme$, $\sAe $ and $\sAmi$, $\sAi$ given by \eqref{eq:def-sAme}--\eqref{eq:def-sAe} and \eqref{eq:def-sAmi}--\eqref{eq:def-sAi}, respectively, we define on $\sXr$ the matrices
\begin{alignat}{3}\notag
&\sA_m:=\diag\bigl(\sAme,\sAmi\bigr),&\quad&
D(\sA_m):=D(\sAme)\times D(\sAmi),\\ \label{eq:def-sA}
&\sA:=\diag\bigl(\sAe,\sAi\bigr),&&D(\sA):=D(\sAe)\times D(\sAi).
\end{alignat}
Then $\sGr,\sA\subset\sA_m$ with domains $D(\sGr)=\ker(\Phix)$ and $D(\sA)=\ker(\sL)$, where
$\Phix$ is given by \eqref{eq:def-PHIx}
in \autoref{lem:rearrange-sG} and
\begin{equation*}
\sL:=\diag\bigl(\sLe,\sLi\bigr)=
\begin{pmatrix}
0&\delta_0&0&0\\
0&0&\delta_1&0\\
0&0&0&\delta_0
\end{pmatrix}
:D(\sA_m)\to\partial\sX:=\partial\sXe\times\partial\sXi=\CC^{\ell+2m}.
\end{equation*}
Moreover, by \autoref{lem:sAe-gen} and \autoref{lem:sAi-gen}, $\sA$ generates a $C_0$-semigroup $\sTt$ given by
\begin{equation}\label{eq:def-sTt}
\sT(t)=\diag\bigl(\sTe(t),\sTi(t)\bigr),\quad t\ge0.
\end{equation}
Hence, the assertion follows if we verify the assumptions (i)--(iv) in \autoref{thm:pert-bc-v2} adapted to the present context. Let $\tn:=\min\{\cbi_1,\ldots,\cbi_m\}$.

\smallbreak
(i) For $t\in[0,\tn]$ and $u\in\rL^{p}([0,\tn],\partial\sXe)$ define $\sBte u\in\sXe=\Xe\times\Xe$ by the right-hand-side of \eqref{eq:sol-BCS-e}. Similarly, for
$\sv=(v,w)^\top\in\rL^p([0,\tn],\partial\sXi)$ define $\sBti\sv\in\sXi=\Xin\times\Xin$ by the right-hand-side of \eqref{eq:sol-BCS} and put 
\[\sBt:=\diag\bigl(\sBte,\sBti\bigr):\partial\sX=\partial\sXe\times\partial\sXi\to\sXr=\sXe\times\sXi.\]
Then $\Bttn\subset\sL(\rL^p([0,\tn],\partial\sX),\sXr)$ is strongly continuous. Moreover, by \autoref{lem:BCS-wp-e} and \autoref{lem:BCS-wp}, $\sx(t):=\sBt(u,\sv)^\top$ solves for given $(u,\sv)\in\rW^{2,p}_0([0,\tn],\partial\sXe)\times\rW^{2,p}_0([0,\tn],\partial\sXi)$
\begin{equation}
%\tag{BCS}
\label{eq:BCSn}
\begin{cases}
\dot\sx(t)=\sA_m\sx(t),&0\le t\le\tn,\\
\sL\sx(t)=\tbinom{u(t)}{\ssv(t)}
,&0\le t\le\tn,\\
\sx(0)=0,
\end{cases}
\end{equation}
hence the assertion follows from \autoref{lem:admiss-LA}.

\smallbreak
(ii) Using the terminology introduced in \autoref{rem:C-admiss},
we have to show that $\Phix$ in \eqref{eq:def-PHIx} is $p$-admissible for the semigroup $\sTt$ generated by $\sA$. By the representations of $\sT(t)$ in \eqref{eq:def-sTt} this follows if we verify the $p$-admissibility of every
\begin{itemize}
\item  $\Phi\in\sL(\rC_0(I,\CC^k),Y)$ for $I=\RRp$ and $k= \ell$ with respect to the semigroups $\Tlet$ and $\Tret$ given in \eqref{eq:def-Telr}, and
\item $\Phi\in\sL(\rC(I,\CC^k),Y)$ for $I=[0,1]$ and $k=m$ with respect to the semigroups $\Tlit$ and $\Trit$ given in \eqref{eq:def-Tilr}.  
\end{itemize}
Let $T(t)\in\{\Tle(t),\Tre(t),\Tli,\Tri\}$ and define
\[
c:=\begin{cases}
(1,\ldots,1)\in\CC^{\ell}&\text{in the external case},\\
\cb=(\cb_1,\ldots,\cb_m)\in\CC^m&\text{in the internal case}.
\end{cases}
\]
Then similarly as in the proof of \autoref{lem:bdd-ext} it follows that
\begin{align*}
\int_0^{\tn}\bigl\|\Phi\,T(t)f\bigr\|_{Y}^p\dt
&=\int_0^{\tn}\Bigl\|\int_Id\eta(s)\,\fh(s\pm c\cdot t)\Bigr\|_{Y}^p\dt\\
&\le\|\eta\|^{p-1}\cdot\int_I\int_0^{\tn}\bigl\| \fh(s\pm c\cdot t)\bigr\|_{\CC^k}^p\dt\,d|\eta|(s)\\
&\le{\|\eta\|^p}\cdot M^p\cdot\|f\|^p_{p},
\end{align*}
where $\eta$ is given by \eqref{eq:Phi-generic-e} and $M:=\|c^{-1}\|_{\CC^k}$.
This completes the proof of (ii).

\smallbreak
(iii)\&(iv)  By (i), \autoref{lem:BCS-wp-e}, \autoref{lem:BCS-wp} and \autoref{lem:admiss-LA} the controllability maps for the problem \eqref{eq:BCSn} are given by
\begin{equation*}
\sBt=
\begin{pmatrix}
0&0&0\\
\Qt&0&0\\
0&\St&0\\
0&0&\Rt
\end{pmatrix}
\in\sL\bigl(\rL^p\bigl([0,\tn],\partial\sX\bigr),\sXr\bigr),
\quad t\in[0,\tn],
\end{equation*}
where $\Qt,\St$ and $\Rt$ are defined in \eqref{eq:def-Rt-Sr}. Hence, for all $\su\in\rW_0^{2,p}([0,t_0],Y)$,
\begin{equation*} 
\sQ_{\tn}\su =\Phi\sBt \su = \sRtn \su,
\end{equation*}
where the last equality is obtained by direct computation using definitions of  $\Phix$ in \eqref{eq:def-PHIx} and  $\sRtn$ in \eqref{eq:def-sR}.
This combined with \autoref{lem:bdd-ext} implies (iii)  and also (iv) follows immediately from the invertibility assumption on $\sR_{\tn}$.

\smallbreak
Summing up we conclude that for $q(\p,\p)\equiv \IId$ the matrix $\sGr$ given in \eqref{eq:def-sGr}, hence by the similarity in \autoref{lem:rearrange-sG} also $\sG$ in \eqref{eq:def-sG}, generate $C_0$-semigroups if $\sRtn$ given by \eqref{eq:def-sR} (for $q(\p,\p)\equiv\IId$) is invertible.
\end{proof}

Next we consider general $q(\p,\p)$, i.e., the case of possibly non-diagonal diffusion coefficient matrices $a(\p,\p)$.

\begin{proof}[Proof of \autoref{thm:gen-Lp}, 3$^{\text{rd}}$ part]
Assume that $a(\p,\p), c(\p,\p)$ are given by \eqref{eq:rep-a(s)} and \eqref{eq:c(s)}, respectively, where $q(\p,\p),q(\p,\p)^{-1}$ are Lipschitz continuous and bounded.
Then via the similarity transformation induced by $\diag(q(\p,\p),q(\p,\p))$ we obtain that the operator matrix 
\begin{equation*}
\sG=
\begin{psmallmatrix}
0&c(\p,\p)\cdot\dds\\
c(\p,\p)\cdot\dds&0
\end{psmallmatrix},
\quad D(\sG)=\ker\bigl(\Phie\cdot c(\p,\p)^{-1}\bigr)\times\ker(\Phin),
\end{equation*}
defined in \eqref{eq:def-sG}, is similar to the operator matrix
\begin{equation*}
\begin{aligned}
\sG&\simeq\begin{psmallmatrix}
0&\mu(\ps,\ps)q^{-1}(\ps,\ps) \dds q(\ps,\ps)\\
\mu(\ps,\ps)q^{-1}(\ps,\ps) \dds q(\ps,\ps)&0
\end{psmallmatrix} \\
&=
\begin{psmallmatrix}
0&\mu(\p,\p)\cdot\dds\\
\mu(\p,\p)\cdot\dds&0
\end{psmallmatrix}+\begin{psmallmatrix}
0&\mu(\ps,\ps)\u^{-1}(\ps,\ps)\u'(\ps,\ps)\\
\mu(\ps,\ps)\u^{-1}(\ps,\ps)\u'(\ps,\ps)&0
\end{psmallmatrix}\\
&=:\hat\sG +\sP,
\end{aligned}
\end{equation*}
where we used that $q(\p,\p)^{-1}c(\p,\p)=\mu(\p,\p)q(\p,\p)^{-1}$ and 
$\dds q(\p,\p) = q'(\p,\p) + q(\p,\p) \dds$.
Since $q$ and $q^{-1}$ are Lipschitz continuous and bounded, we have $\sP\in\sL(X\times X)$.
Moreover, note that 
\begin{equation*}
 D(\hat\sG)=\ker\bigl(\hat\Phi_1\cdot \mu(\p)^{-1}\bigr)\times\ker(\hat\Phi_0),
\end{equation*}
for $\hat\Phi_1:=\Phie\cdot q(\p,\p)$, $\hat\Phi_0:=\Phin\cdot q(\p,\p)$.
Hence, by similarity and bounded perturbation $\sG$ is a generator iff $\hat\sG$ is. 
However, by what we proved previously for $q(\p,\p)=\IId$, i.e., for diagonal $c(\p,\p)=\mu(\p,\p)$, the operator $\hat\sG$ is a generator if $\sRtn$ given by \eqref{eq:def-sR} is invertible.
\end{proof}

We conclude the proof by considering non-zero boundary operators $B\in\sL(X)$.

\begin{proof}[Proof of \autoref{thm:gen-Lp}, 4$^{\text{th}}$ part] It remains to prove the result for $B\ne0$ satisfying the regularity condition 
\begin{equation}\label{eq:B-reg-cond}  
B\bigl(\WepRpCl\times\WepneCm\bigr)\subseteq\WepRpCl\times\WepneCm.
\end{equation}
To this end we put $\Beq:=c(\p,\p)\cdot B\in\sL(X)$ and perturb the matrix $\sG$ in \eqref{eq:def-sG} by
\[
\sB:=
\begin{pmatrix}
0&\Beq\\0&0
\end{pmatrix}
\in\sL(\sX).
\]
Then, by Part~3 and the bounded perturbation theorem, $\sG_B:=\sG+\sB$ generates a group. Now a simple computation using \autoref{cor:reg-prod} shows that
\begin{equation*} 
\sG_B^2:=\begin{pmatrix}
(D_\Phin+\Beq) D_\Phieb&0\\0&\Gt_B
\end{pmatrix},
\quad
D(\sG_B^2):=D\bigl(D_\Phin D_\Phieb\bigr)\times D(\Gt_B),
\end{equation*}
where
\begin{align*}
\Gt_B:&=D_\Phieb(D_\Phin+\Beq),\\
D\bigl(\Gt_B\bigr):&=\bigl\{f\in\WzpRpCl\times\WzpneCm:\Phin f=0,\;\Phie(f'+Bf)=0\bigr\}.
\end{align*}
Hence,  $\Gt_B$ with domain $D(\Gt_B)=D(G)$ generates a cosine family on $X$ with phase space $V\times X$ for $V:=[D(D_\Phin)]=\ker(\Phin)$.
Now as in Part~1 we have $P:=\frac{a'}2\cdot\dds\in\sL(V,X)$. Moreover, since $B\in\sL(X)$, the regularity property \eqref{eq:B-reg-cond}  
combined with \autoref{cor:reg-prod} and the closed graph theorem  imply $\Beq\in\sL(\WepRpCl\times\WepneCm)$. Hence, $Q:=c\cdot\dds\cdot\Beq\in\sL(V,X)$ and since
$G=\Gt_B-P-Q$, the claim follows from \cite[Cor.~3.14.13]{ABHN:11} as in Part~1.
\end{proof}

\begin{rem}
(i) As in the 1$^{\text{st}}$  part of the proof of \autoref{thm:gen-Lp} one can see that operators $A:=a(\p,\p)\cdot\tddss$ and $A_d:=\tdds\left( a(\p,\p)\cdot\tdds\right)$ differ only by a bounded perturbation from $V\to X$. Hence, $A$ generates a cosine family on $X$ with phase space $V\times X$ iff $A_d$ generates a cosine family on $X$ with phase space $V\times X$. 

\smallbreak
(ii)  Note that by \cite[Cor.~3.14.13]{ABHN:11} the sum $G+P$ of the generator $G$ of a cosine family with phase space $V\times X$ and a perturbation $P\in\sL(V,X)$ still generates a cosine family with the same phase space. Here in the context of \autoref{thm:gen-Lp} we have $V\inc\WepRpCl\times\WepneCm$ which implies $(\dds,\dds)\in\sL(V,X)$.
Thus, boundedness and invertibility of $\sRtn$ in \eqref{eq:def-sR} imply that for arbitrary $b(\p,\p),q(\p,\p)\in\rL^\infty(\RR_+,\CC^{\ell})\times \rL^\infty([0,1],\CC^m)$  also $G_P:=G+P$ for
\begin{equation*} 
P:=b(\p,\p)\cdot\tdds+q(\p,\p)
\end{equation*}
with domain $D(G_P):=D(G)$ generates a cosine family with the same phase space.

\smallbreak
(iii) By \cite[Thm.~3.14.17]{ABHN:11} every generator of a cosine family generates an analytic semigroup of angle $\frac\pi2$. Hence,  the previous remark gives also conditions implying that $G+P$  generates an analytic semigroup of angle $\frac\pi2$.

\smallbreak
(iv) It is quite remarkable that $\sG$ in \eqref{eq:def-sG} might generate a $C_0$-semigroup even if none of its entries $D_\Phin$ and $D_\Phieb$ are generators. For example for  $\ell=0$, $m=1$ and $Y_0=\CC^{\ell+2m}=\CC^2$, $Y_1=\{0\}$ we can choose $c(\p)\equiv1$, $\Phin=\binom{\delta_0}{\delta_1}$, $\Phieb=0$ and $B=0$. Then $\sigma(D_\Phin)=\sigma(D_\Phieb)=\CC$ and hence both operators do not generate semigroups. On the other hand, the assumptions of \autoref{thm:gen-Lp} are satisfied, hence $G=D_\Phieb\cdot D_\Phin=\Delta_D$, i.e., the Laplacian with Dirichlet boundary conditions, generates a cosine family on $\rL^p[0,1]$ for all $p\in[1,\infty)$. Similarly, by reversing the roles of $\Phin$ and $\Phieb$ (or by looking at the upper diagonal entry $D_\Phin\cdot D_\Phieb$ of $\sG^2$) it follows that also $G=\Delta_N$, i.e., the Laplacian with Neumann boundary conditions, generates a cosine family on $\rL^p[0,1]$. Similarly, choosing  $\ell=1$ and $m=0$ it follows easily that the Laplacian with Dirichlet or Neumann boundary conditions generates a cosine family on $\rL^p(\RRp)$.

\smallbreak
(v) Another remarkable fact is that while $\sA$ in \eqref{eq:def-sA} only generates a semigroup, its perturbation $\sGr$ in \eqref{eq:def-sGr} always generates a group.

\smallbreak
(vi)  We mention that even for smooth positive definite valued $a(\p,\p)$ a representation as in \eqref{eq:rep-a(s)} is not always possible.
Assume for simplicity that  $\ell=0$\footnote{In the case of empty external part ($\ell=0$) we write $q(\p)= q^i(\p)$,  $\lambda(\p)= \lambda^i(\p)$, $\mu(\p)= \mu^i(\p)$, $f=f^i$ and  $a(\p)$ and $c(\p)$ instead of $a(\p,\p)$ and $c(\p,\p)$.}. If $q(\p)\in\rC^\infty[0,1]$ such that $0<q(s)\le\frac12$ for $s\ne\frac12$ and $q^{(k)}(\frac12)=0$ for all $k=0,1,2,\ldots$, then
\[
a(s):=
\begin{cases}
\begin{psmallmatrix}
1+q(s)&0\\0&1-q(s)
\end{psmallmatrix}
&\text{if }s\in[0,\frac12],\\[6pt]
\begin{psmallmatrix}
1&q(s)\\q(s)& 1
\end{psmallmatrix}
&\text{if }s\in(\frac12,1],
\end{cases}
\]
cannot be diagonalized by means of a continuous $q(\p)$ even though $a(\p)\in\rC^\infty([0,1],\CC^2)$. This follows from the fact that the eigenvectors of $a(s)$ are given by
\begin{equation*}
\begin{cases}
\binom10,\;\binom01&\text{if }s\in[0,\frac12],\\[2pt]
\binom11,\;\binom1{-1}&\text{if }s\in(\frac12,1].
\end{cases}
\end{equation*}
However, if $a(\p)$ is analytic or if $a(s)$ has $m$ distinct eigenvalues for all $s\in[0,1]$ then it can always represented as in \eqref{eq:rep-a(s)}. 
Nevertheless, in case $p=2$ one can drop this assumption for an important class of boundary functionals, cf. \cite{Eng:13}. 
\end{rem}
\smallskip

Feller \cite{Fe:57} has characterized the boundary conditions in the domain of the generator 
of the transition semigroup corresponding to one-dimensional diffusion processes. Besides Dirichlet and Neumann boundary conditions (which we discuss in \autoref{ex-scalar-Dirichlet}), these include also non-local integral conditions which we discuss next. Note that this also generalizes the well-posednes results in \cite{MN:14}.
 
\begin{exa}\label{ex-nonlocal-interval}
We consider a diffusion operator $G\subseteq \tddss$ on $\rL^p[0,1]$ with  non-local boundary conditions. More precisely, for $\h_0,\h_1\in\rL^q[0,1]$ where $\frac1p+\frac1q=1$, we define the domain
\[D(G):=\left\{f\in\rW^{2,p}[0,1] \,\big\mid\, f(j) = \int_0^1\h_j(s)f(s) \ds,\, j=0,1 \right\}.\]
In our setting this corresponds to $\ell=0$,  $m=1$, the diffusion coefficient $a(\p)\equiv1$, the state space $X= \rL^{p}[0,1]$, the boundary spaces $Y_1 = \{0\}$, $Y_0=\CC^{2}$, and the boundary functionals $ \Phie = 0$, 
$ \Phin = \binom{\delta_1 - \sV_1}{\sV_0-\delta_0 }$, where 
$\sV_jf:=\int_0^1\h_j(s)f(s) \ds$.
This implies $\Jpb =\Id$, $\cb=1$, $q(\p)\equiv1$,
and for the operators $R_t,S_t$ defined in \eqref{eq:def-Rt-Sr} 
we obtain for $u\in\rL^{p}[0,\tn]$, $0<\tn<1$, 
\begin{equation*}
\delta_0\,\Rt u=  \delta_1\,\St u = u(t) \quad\text{ and }\quad
\delta_1\,\Rt u=\delta_0\,\St u=0,\quad t\in[0,\tn].
\end{equation*}
 Moreover, a simple computation shows that
\begin{align*}
\sV_j\Rt u&=\int_0^t\h_j(s)\,u(t-s)\ds=(\h_j*u)(t)=:(\sK_j u)(t),\\
\sV_j\St u&=\sV_j\psi\Rt u=\int_0^t(\psi \h_j)(s)\,u(t-s)\ds=\bigl((\psi\h_j)*u\bigr)(t)=:(\tilde\sK_j u)(t).
\end{align*}
Hence, the operator $\sRtn$ in \eqref{eq:def-sR} is given by
\begin{equation*}
\sRtn=\Id-
\begin{pmatrix}
\tilde\sK_1&-\sK_1\\
\tilde\sK_0&\sK_0
\end{pmatrix},
\end{equation*}
where by Young's inequality each convolution operator $\sK\in\{\sK_j,\tilde\sK_j:j=0,1\}\subset\sL(\rL^p[0,\tn])$ with kernel $h\in\{h_j,\psi\h_j:j=0,1\}\subset\rL^q[0,1]\subset\rL^1[0,1]$ satisfies
\[
\|\sK\|_{\sL(\rL^p[0,\tn])}\le\bigl\|h|_{[0,\tn]}\bigr\|_1\to0\quad\text{as }\tn\to0.
\]
This implies that $\sRtn$ is invertible for $\tn\in(0,1]$ sufficiently small and by  \autoref{thm:gen-Lp} we conclude that $G$ generates a cosine family on $X$.
\end{exa}

We close this section by considering two very common and important classes of boundary conditions. The first one uses a set of ``boundary matrices'' \eqref{eq:bound-matr} to impose the values in the end points, the second one uses two ``boundary spaces'' $Y_0,Y_1$  instead.
As we will see, in both cases our main assumption in \autoref{thm:gen-Lp}, the invertibility of the map $\sRtn$  given by \eqref{eq:def-sR}, reduces to a condition which can be easily verified. More precisely, in the first case we obtain the determinant condition \eqref{eq:cond-det}, in the second one the direct sum condition \eqref{eq:intersec-0}.

\subsection{Boundary conditions via ``boundary matrices''}
\label{ssec:BM}

For $k_0,k_1\in\NN_0$ satisfying $k_0+k_1=\ell+2m$ we choose matrices
\begin{equation}\label{eq:bound-matr}
\Vne\in\rM_{k_0 \times \ell},\quad
\Wne\in\rM_{k_1 \times \ell},\quad
\Vni,\Vei\in\rM_{k_0\times m}(\CC),\quad
\Wni,\Wei\in\rM_{k_1\times m}(\CC).
\end{equation}
Moreover, we define 
\begin{equation*}
\Wneb:=\Wne\cdot\ce(0)^{-1} ,\qquad
\Wnib:=\Wni\cdot\ci(0)^{-1},\qquad \Weib:=\Wei\cdot\ci(1)^{-1},
\end{equation*}
for $\ce(\ps)$ and $\ci(\ps)$ given in \eqref{eq:def-ce} and \eqref{eq:def-ci}, respectively.
Next we will use the matrices $\Vne,\Vni,\Vei$ to specify $k_0$ conditions containing only values at the endpoints, while the matrices $\Wne, \Wni, \Wei$ will determine $k_1$ (linear independent) conditions regarding derivatives at the endpoints.

\begin{cor}\label{cor:gen-V-W}
Let $a(\p,\p)$ be as in \eqref{eq:rep-a(s)} and assume that $\Be\in\sL(\LpRpCl,\rL^{p}(\RRp,\CC^{k_1}))$ and
$\Bi\in\sL(\LpneCm,\rL^{p}([0,1],\CC^{k_1}))$ map $\rW^{1,p}$ into $\rW^{1,p}$, i.e., \[
\Be\rW^{1,p}\bigl(\RRp,\CC^{\ell}\bigr)\subseteq\rW^{1,p}\bigl(\RRp,\CC^{k_1}\bigr)
\quad\text{and}\quad
\Bi\rW^{1,p}\bigl([0,1],\CC^m\bigr)\subseteq\rW^{1,p}\bigl([0,1],\CC^{k_1}\bigr).
\]
For $f=\binom{\fe}{\fin}\in\WzpRpCl\times\WzpneCm$ consider the boundary conditions
\begin{equation}\label{eq:BC-gen}
\begin{cases}
\Vne\fe(0)+\Vni\fin(0)+\Vei\fin(1)=0,\\
\Wne(\fe)'(0)+\Wni(\fin)'(0)-\Wei(\fin)'(1)+(\Be\fe)(0)+(\Bi\fin)(0)=0.
\end{cases}
\end{equation}

If the determinant
\begin{equation}\label{eq:cond-det}
\det
\begin{pmatrix}
\Vne&\Vei&\Vni\\[2pt]
\Wneb&\Weib&\Wnib
\end{pmatrix}
\ne0,
\end{equation}
then the operator
\begin{equation}\label{eq:def-G-delta}
\begin{aligned}[3]
G&:=a(\p,\p)\cdot\tddss,\\
D(G)&:=\biggl\{f=\tbinom{\fe}{\fin}\in
\WzpRpCl\times\WzpneCm
\;\bigg|\text{ $f$ satisfies \eqref{eq:BC-gen}}
\biggr\},
\end{aligned}
\end{equation}
generates a cosine family on $X=\LpRpCl\times\LpneCm$ with phase space $V\times X$ for 
\[
V:=\Bigl\{\tbinom{\fe}{\fin}\in\WepRpCl\times\WepneCm:\Vne\fe(0)+\Vni\fin(0)+\Vei\fin(1)=0\Bigr\}.
\]
\end{cor}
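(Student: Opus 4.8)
The plan is to recognize \eqref{eq:BC-gen} as a special case of \eqref{eq:def-G-general} and then to show that the determinant condition \eqref{eq:cond-det} is exactly what makes the operator $\sRtn$ of \autoref{thm:gen-Lp} invertible. First I would set $Y_0:=\CC^{k_0}$, $Y_1:=\CC^{k_1}$ and read off the boundary functionals as the point evaluations
\[
\Phin f=\Vne\fe(0)+\Vni\fin(0)+\Vei\fin(1),\qquad
\Phie h=\Wne h^e(0)+\Wni h^i(0)-\Wei h^i(1),
\]
so that $\Phin f=0$ is the first line of \eqref{eq:BC-gen} and $\Phie(f'+Bf)=0$ reproduces the second line once the zeroth-order terms $(\Be\fe)(0)+(\Bi\fin)(0)$ are absorbed into the bounded operator $B$; the hypothesis that $\Be,\Bi$ map $\rW^{1,p}$ into $\rW^{1,p}$ is precisely the regularity \eqref{eq:B-reg-cond}, and full row rank of the derivative block $(\Wne\,|\,\Wni\,|\,\Wei)$ (forced by \eqref{eq:cond-det}) makes such a $B$ available. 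Since \autoref{thm:gen-Lp} produces a generator independently of $B$, and since $\sRtn$ in \eqref{eq:def-sR} does not involve $B$ at all, it then suffices to analyze $\sRtn$ with $B=0$.

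The decisive simplification is that for these point-evaluation functionals $\sRtn$ collapses to a single multiplication operator. I would take $\tn:=\min\{\cbi_1,\dots,\cbi_m\}$ and evaluate the three operator-columns of \eqref{eq:def-sR} at the endpoints $s=0,1$, using $\varphi^e(0)=0$, $\phib(0)=0$ and $\phib(1)=1$. For $t\in[0,\tn]$ the extended delayed data $\ueh(t-\p)$, $\uih(t-\tfrac\ps\cb)$ and $\uih(t+\tfrac{\ps-1}\cb)$ then reproduce the control value ($\ue(t)$, resp.\ $\ui(t)$) at the endpoint toward which the corresponding shift moves and, for the two internal columns, vanish at the opposite endpoint, the latter because the relevant shift is nilpotent on $[0,1]$ for times $\le\tn$ (this is exactly what forces the choice of $\tn$). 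Consequently each block-column of \eqref{eq:def-sR} becomes pointwise-in-$t$ multiplication by a constant matrix, and $\sRtn$ reduces to multiplication by
\[
\sR=
\begin{pmatrix}
\Wne\,\qe(0)\,\ce(0)^{-1} & -\Wei\,\qi(1)\,\ci(1)^{-1} & \Wni\,\qi(0)\,\ci(0)^{-1}\\
-\Vne\,\qe(0) & \Vei\,\qi(1) & -\Vni\,\qi(0)
\end{pmatrix}\in\rM_{\ell+2m}(\CC),
\]
where the upper block-row comes from $\Phieb=\Phie\cdot c(\p,\p)^{-1}$ and the lower one from $\Phin$.

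It then remains to relate $\det\sR$ to \eqref{eq:cond-det}. Factoring the invertible boundary values $\qe(0),\qi(1),\qi(0)$ out of the three columns on the right and using $c(\p,\p)^{-1}=q(\p,\p)\,\diag(\ce,\ci)^{-1}\,q(\p,\p)^{-1}$ from \eqref{eq:c(s)}, the factor $c(\p,\p)^{-1}$ recombines with the extracted $q$-values so that the upper block-row is governed by the matrices $\Wneb=\Wne\ce(0)^{-1}$, $\Weib=\Wei\ci(1)^{-1}$, $\Wnib=\Wni\ci(0)^{-1}$ of the statement; up to a sign and a permutation of columns this yields that $\sR$ is invertible if and only if the determinant in \eqref{eq:cond-det} is nonzero. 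For diagonal $a(\p,\p)$ (i.e.\ $q(\p,\p)\equiv\Id$) this recombination is immediate, and the careful bookkeeping of how the boundary values of $q$ cancel against the diagonal $\ce,\ci$ in the general case is the step I expect to require the most care.

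Once $\sR$ is invertible, multiplication by $\sR^{-1}$ is a bounded operator on $\rL^p([0,\tn],Y)$, so $\sRtn$ is invertible there and \autoref{thm:gen-Lp} applies: $G$ generates a cosine family on $X$ with phase space $\ker(\Phin)\times X$. Intersecting $\ker(\Phin)$ with $\WepRpCl\times\WepneCm$ gives exactly the space $V$ of the statement, which identifies the phase space and finishes the argument. The main obstacle is the matrix computation of the middle paragraphs — establishing that the delay operators degenerate as claimed (which hinges on the choice of $\tn$ and the extension-by-zero convention killing the far-endpoint contributions) and that the $\qe,\qi,\ce,\ci$ boundary factors recombine into the clean determinant \eqref{eq:cond-det}.
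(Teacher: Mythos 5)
Your proposal is correct and follows essentially the same route as the paper: recast \eqref{eq:BC-gen} in the form $\Phin f=0$, $\Phie(f'+Bf)=0$ with point-evaluation functionals, absorb the zeroth-order terms into a bounded $B$ preserving $\rW^{1,p}$, observe that for $\tn=\min\{\cbi_1,\ldots,\cbi_m\}$ the delay operators $\Qt,\Rt,\St$ degenerate at the endpoints so that $\sRtn$ becomes multiplication by a constant matrix, and identify its invertibility with \eqref{eq:cond-det} after factoring out $\diag(\qe(0),\qi(1),\qi(0))$. The only step you assert rather than carry out is the construction of $B$, which the paper does explicitly from a right inverse of the block row $(\Wneb,\Weib,\Wnib)$ (available precisely because of \eqref{eq:cond-det}, i.e.\ the full-row-rank fact you invoke) combined with restriction/extension operators and an affine interpolation in $s$; this is routine once that enabling fact is in hand.
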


\begin{proof}
In order to fit this setting into our general framework let $Y_0:=\CC^{k_0}\times\{0\}^{k_1}\subseteq\CC^{\ell+2m}$ and $Y_1:=\{0\}^{k_0}\times\CC^{k_1}\subseteq\CC^{\ell+2m}$
and define $\Phi_j\in\sL(\CnRpCl\times\CneCm,Y_j)$, $j=0,1$ by
\begin{equation}\label{eq:Phi-delta01}
\Phin:=
\begin{pmatrix}
\Vne\cdot\delta_0&\Vni\cdot\delta_0+\Vei\cdot\delta_1\\
0&0
\end{pmatrix}
,
\quad
\Phie:=
\begin{pmatrix}
0&0\\
\Wne\cdot\delta_0&\Wni\cdot\delta_0-\Wei\cdot\delta_1
\end{pmatrix}.
\end{equation}
Our next aim is to rewrite the boundary conditions \eqref{eq:BC-gen} as
\begin{equation*}
\Phin\tbinom{\fe}{\fin}=0,\quad
\Phie\left(\tbinom{\fe}{\fin}'+ B\tbinom{\fe}{\fin}\right)=0
\end{equation*}
for a suitable operator $B\in\sL(X)$ leaving $\rW^{1,p}$ invariant.
To this end first note that by \eqref{eq:cond-det} there exist matrices $\Rne\in\rM_{\ell \times  k_1}(\CC)$, $\Rni,\Rei\in\rM_{m\times k_1}(\CC)$ such that
\begin{equation}\label{eq:wrid}
\Wneb\cdot\Rne-\Weib\cdot \Rei+\Wnib\cdot \Rni
=Id_{\CC^{k_1}}.
\end{equation}
Denote by $\R\in\sL(\rL^p(\RRp,\CC^{k_1}),\rL^p([0,1],\CC^{k_1}))$ the restriction operator, i.e. $\R f:=f|_{[0,1]}$ for $f\in\rL^p(\RRp,\CC^{k_1})$. Moreover,
let $E\in\sL(\rL^p([0,1],\CC^{k_1}),\rL^p(\RRp,\CC^{k_1}))$ be an extension operator such that $\R Eg=g$ for all $g\in\rL^p([0,1],\CC^{k_1})$ and  $E(\rW^{1,p}([0,1],\CC^{k_1}))\subset\rW^{1,p}(\RRp,\CC^{k_1})$. Now put 
\begin{equation}\label{eq:def-B1}
B:=
\begin{pmatrix}
\ce(0)^{-1}\cdot\Rne\cdot\Be&
\ce(0)^{-1}\cdot\Rne\cdot E\Bi\\[2pt]
C\cdot\R\Be&
C\cdot\Bi
\end{pmatrix}
\in\sL(\Xe\times\Xin),
\end{equation}
where 
\begin{equation*}
C:=\bigl((\eins-\sbb)\cdot \ci(0)^{-1}\Rni+\sbb\cdot \ci(1)^{-1}\Rei\cdot\psi\bigr)
\in\sL\bigl(\rL^p([0,1],\CC^{k_1}),\LpneCm\bigr)
\end{equation*}
for $\eins(s)=1$, $\sbb(s)=s$, $s\in[0,1]$ and $\psi(g):=g(1-\p)$ for $g\in\rL^p([0,1],\CC^{k_1})$. 
Then we have $B(\WepneCm\times\WepRpCl)\subseteq\WepneCm\times\WepRpCl$ and a simple computation using \eqref{eq:wrid} shows that for $f=\tbinom{\fe}{\fin}\in\WzpRpCl\times\WzpneCm$ 
\[
f\text{ satisfies \eqref{eq:BC-gen}}
\quad\iff\quad
\Phin f=0,\ \Phie(f'+B f)=0.
\]
Moreover, for the operators $Q_t,R_t,S_t$ defined in \eqref{eq:def-Rt-Sr} we have
\begin{equation}\label{eq:Rt-St-01}
\begin{aligned}
\delta_0\,\Qt u=u(t),\qquad
&\delta_0\,\Rt v=v(t),
&\quad&\delta_1\,\Rt v=0,\\
&\delta_0\,\St v=0,
&\quad&\delta_1\,\St v=v(t),
\end{aligned}
\end{equation}
where $u\in\rW^{1,p}([0,\tn],\CC^{\ell})$, $v\in\rW^{1,p}([0,\tn],\CC^m)$ 
and $0<\tn\le\min\{\varphi_1(1),\ldots,\varphi_m(1)\}$. Note also that
\begin{equation*}
\delta_0\,\Jp =  \delta_0\,\Jpb = \delta_0\quad\text{and}\quad \delta_1\,\Jpb = \delta_1. 
\end{equation*}
Using all this we compute the operator $\sRtn$ given in \eqref{eq:def-sR} as
\begin{equation}
\sRtn=
\begin{pmatrix}
-\Vne&\Vei&-\Vni\\[2pt]
\Wneb&-\Weib&\Wnib
\end{pmatrix}
\cdot
\diag\bigl(\qe(0),\qi(1),\qi(0)\bigr)
\in\sL\bigl(\rL^p\bigl([0,\tn],\CC^{\ell+2m}\bigr)\bigr).
\end{equation}
Since the matrix $\diag(\qe(0),\qi(1),\qi(0))\in\rM_{\ell+2m}(\CC)$ is always invertible,
the assertion follows from \autoref{thm:gen-Lp}.
\end{proof}

We give some possible choices for the operators $\Be,\Bi$ appearing in \autoref{cor:gen-V-W}.

\begin{exa} \label{ex-matrices}
\makeatletter
\hyper@anchor{\@currentHref}%
\makeatother
\begin{enumerate}[(i), wide=12pt]
\item For matrices $\Une\in\rM_{k_1 \times \ell}(\CC)$, $\Uni,\Uei\in\rM_{k_1\times m}(\CC)$ define the operators
\begin{align*}
\Be:&=\Une\in\sL\bigl(\LpRpCl,\rL^{p}(\RRp,\CC^{k_1})\bigr),\\
\Bi:&=\Uni+\Uei\cdot\psi\in\sL\bigl(\LpneCm,\rL^{p}([0,1],\CC^{k_1})\bigr),
\end{align*}
where $\psi f(\p):=f(1-\p)$ for $f\in\Xin$.
Then $\Be,\Bi$ map $\rW^{1,p}$ into $\rW^{1,p}$ and  for  $f=\binom{\fe}{\fin}\in\WzpRpCl\times\WzpneCm$ the second condition in \eqref{eq:BC-gen}
gives the mixed boundary condition
\begin{equation}\label{eq:BC-mixed}
\Wne(\fe)'(0)+\Wni(\fin)'(0)-\Wei(\fin)'(1)+\Une\,\fe(0)+\Uni\,\fin(0)+\Uei\,\fin(1)=0.
\end{equation}
In particular, this covers the boundary conditions considered in \cite{KPS:08,HKS:15}.

\smallbreak
\item For arbitrary operators $\Te\in\sL(\LpRpCl,\CC^{k_1})$ and $\Ti\in\sL(\LpneCm,\CC^{k_1})$ define
\begin{alignat*}{3}
&\Be\in\sL(\Xe,\rL^{p}(\RRp,\CC^{k_1})),&&
\quad (\Be\fe)(s)=e^{-s}\cdot \Te\fe,\\
&\Bi\in\sL(\Xin,\rL^{p}([0,1],\CC^{k_1})),&&
\quad (\Bi\fin)(s)\equiv \Ti\fin.
\end{alignat*}
Then again $\Be,\Bi$ map $\rW^{1,p}$ into $\rW^{1,p}$ and  for $f=\binom{\fe}{\fin}\in\WzpRpCl\times\WzpneCm$ the second boundary condition in \eqref{eq:BC-gen} reduces to
\begin{equation*}
\Wne(\fe)'(0)+\Wni(\fin)'(0)-\Wei(\fin)'(1)+\Te\fe+\Ti\fin=0.
\end{equation*}
Note that by choosing operators $\Te,\Ti$ properly (e.g.~as an integral)  we thus obtain various non-local boundary conditions. 

\item We can also combine the two examples above and obtain the second condition in \eqref{eq:BC-gen}
of the form
\begin{equation}\label{eq:BC-opT}
\Wne(\fe)'(0)+\Wni(\fin)'(0)-\Wei(\fin)'(1)+\Une\,\fe(0)+\Uni\,\fin(0)+\Uei\,\fin(1)+\Te\fe+\Ti\fin=0.
\end{equation}
\end{enumerate}
\end{exa}

We now show some applications of \autoref{cor:gen-V-W}, first  to simple scalar examples.

\begin{exa}\label{ex-scalar-Dirichlet}
We consider the second derivative $G_p$ and $G_D$ with periodic- and Dirichlet boundary conditions, respectively, on $[0,1]$, that is $G_p,\, G_D\subset\tddss$ on $X:=\rL^p[0,1]$ with domains
\begin{equation*}
\begin{aligned}[3]
D(G_p)&:=\bigl\{f\in \rW^{2,p}[0,1]\mid f(0)=f(1) \text{ and } f'(0) = f' (1) 
\bigr\},\\
D(G_D)&:=\bigl\{f\in \rW^{2,p}[0,1] \mid f(0)=f(1) =0
\bigr\}.
\end{aligned}
\end{equation*}
In order to write these boundary conditions as in \eqref{eq:BC-gen} we choose $\ell=0$\footnote{As before, in the case of empty external part ($\ell=0$) we shorten the notation and omit the superscript  `$i$' for the boundary matrices.} and $m=1$. Moreover, in case of $G_p$  we take  $k_0=k_1=1$ and scalars $V_0=1$, $V_1=-1$, $W_0=W_1=1$. Then the determinant condition \eqref{eq:cond-det} is fulfilled, hence $G_p$ generates a cosine family.  To handle $G_D$ one might be tempted to choose again $k_0=k_1=1$.  Then the first boundary condition $f(0)=0$ can be implemented by choosing $V_0=1$, $V_1=0$ while the second condition $f(1)=0$ follows from \eqref{eq:BC-mixed} if we take $W_0=W_1=U_0=0$ and $U_1=1$. However, by doing so \eqref{eq:cond-det} is not fulfilled nevertheless it is well-known that $G_D$ generates a cosine family. At a first glance, \autoref{cor:gen-V-W} fails in this simple example, thus only gives a sufficient but in general not necessary generation criterion.

However, as pointed out earlier, the matrices $W_0, W_1$ should be used to implement $k_1$ \emph{linear independent} conditions regarding the derivatives at the endpoints. In case of $G_D$ this means that we have to choose $k_0=2$, $k_1=0$ and the two boundary matrices
$V_0= \binom10$, $V_1=\binom01$. For this choice \eqref{eq:cond-det} is fulfilled, yielding the desired generation result. 

We leave it to the reader to check that also problems with Neumann- or mixed boundary conditions on an interval can be handled in the same way.
\end{exa}

Next we consider an example %(with empty external part, i.e.,  $\ell=0$) 
showing that the generator property of $G$ not only depends on the matrices $V_0,V_1,W_0$ and $W_1$ which determine the  domain $D(G)$ in \eqref{eq:def-G-delta} but also on the values of the diffusion coefficients $\mu(s)=\sqrt{\lambda(s)}$ for $s=0,1$, appearing in the definitions of $\bar{W_0}$ and $\bar{W_1}$. 

\begin{exa}
For some Lipschitz continuous, positive function $a(\p):[0,1]\to(0,+\infty)$ consider on $X:=\rL^p[0,1]$, $1\le p<+\infty$, the operator
\begin{equation}\label{eq:def-G-expl}
\begin{aligned}
G&:=a(\p)\cdot\tddss,\\
D(G)&:=\bigl\{f\in\rW^{2,p}[0,1] \mid f(0)+f(1)=0,\;f'(0)-f'(1)=0\bigr\}.
\end{aligned}
\end{equation}
Then $G$ is given by \eqref{eq:def-G-delta} for $\ell=0$, $m=1=k_0=k_1$,  and $V_0=W_0=W_1=V_1=1$. % and $B^i=0$. 
This gives for $c(\p):=\sqrt{a(\p)}$ the condition
\begin{equation*}
\det
\begin{psmallmatrix}
V_1&V_0\\ \bar{W_1}&\bar{W_0}
\end{psmallmatrix}
=c(0)^{-1}-c(1)^{-1}\ne0
\quad\iff\quad
a(0)\ne a(1).
\end{equation*}
Hence, by the previous result $G$ in \eqref{eq:def-G-expl} generates a cosine family if $a(0)\ne a(1)$.

\smallbreak
We note that in case $a(0)=a(1)$ the operator $G$ never generates a cosine family or even  an analytic semigroup. To prove this assertion, we denote the operator obtained by \eqref{eq:def-G-expl} for $a(\p)\equiv1$ by $G_1$. Then for each $\lambda\in\CC$ we have $e_\lambda\in\ker(\lambda-G_1)$ where
\[
e_\lambda(s):=e^{\sqrt{\lambda}\cdot s}+e^{\sqrt{\lambda}\cdot(1-s)},\ s\in[0,1].
\]
Hence, $\sigma(G_1)=\CC$ implying that $G_1$ cannot be a generator. 

\smallbreak
For Lipschitz continuous, positive $a(\p)\in\rC[0,1]$ one can use the similarity transformation induced by $\Jpb\in\sL(X)$, $\Jpb f:=f\circ\phib$ to show that in case $a(0)=a(1)$ the operators $G$ and $G_1$ are similar up to the perturbation $P:=\frac{a'}2\cdot\dds$. Since $P$ is relatively bounded with bound $0$, this implies the claim.

\smallbreak
Summing up, in this example \autoref{cor:gen-V-W} gives an optimal result which demonstrates the sharpness of the underlying perturbation argument from \autoref{sec:dom-pert}.
\end{exa}

We continue with an example on a simple star-shaped non-compact metric graph, cf.  \autoref{fig:star3}. Further applications to general metric graphs are presented in  \autoref{sec:W&D}.

\begin{figure}[hbt]  
   \centering
   \begin{tikzpicture}[-,auto,node distance=3cm,
  thick,main node/.style={circle,fill=white,draw}]

  \node[main node] (1) at (0,0) {$0$};
  \node (2) at (5,0) {};
  \node[main node] (3) at (-1.5,1.5) {$1$};
  \node[main node] (4) at (-1.5,-1.5) {$1$};

\tikzset{edge/.style = {->,> = latex'}}
 \draw[edge] (1) edge node {$\me_1^e$}  (2);
  \path[every node/.style={font=\sffamily}]
     (1) %edge node {$\me_3$} (2)
          edge node {$\me_1^i$} (3)
          edge node {$\me_2^i$} (4);
\end{tikzpicture}
 \caption{Star-shaped graph from \autoref{ex:star3}.}  
 \label{fig:star3}
\end{figure}

\begin{exa}\label{ex:star3}
We consider a diffusion process described by $G\subset\ddss$ along the edges of the non-compact star graph presented in \autoref{fig:star3}.
The two compact edges $\me_1^i,\me_2^i$ are parametrized as $[0,1]$, with 0 in the common vertex,  while $\me_1^e= \RRp$.
In the central vertex we  assume continuity and an additional boundary condition for the derivatives, i.e.,
\begin{equation*}\label{eq:star3bc}
\begin{aligned}
&f_1^e(0)=f_1^i(0)=f_2^i(0), \\
&\alpha\cdot (f^e_1)'(0) +  \beta\cdot (f^i_1)'(0) + \gamma\cdot (f^i_2)'(0) = 0,
\end{aligned}
\end{equation*}
for some $\alpha,\beta,\gamma\in\CC$, 
while at the remaining endpoints we set the following  Neumann- and Robin condition, respectively, that is
\begin{equation*}
 (f_1^i)'(1)=0,\quad \delta\cdot f_2^i(1)+ \varepsilon\cdot(f_2^i)'(1) = 0
\end{equation*}
for some $\delta,\varepsilon\in\CC$.
Then $\ell = 1$, $m=2$ and if $\varepsilon\ne0$ we choose  $k_0=2$, $k_1=3$ and boundary matrices
\begin{alignat*}{5}
&\Vne=\begin{psmallmatrix}
0\\1
\end{psmallmatrix},
\quad
&&\Vni=\begin{psmallmatrix}
1&-1 \\0&-1
\end{psmallmatrix},
\quad
&&\Vei=\begin{psmallmatrix}
0&0 \\0&0
\end{psmallmatrix},
\\
&\Wne=\begin{psmallmatrix}
0\\\alpha\\0 
\end{psmallmatrix},
\quad
&& 
\Wni=\begin{psmallmatrix}
0&0\\
\beta&\gamma\\
0&0
\end{psmallmatrix},
\quad
&&
\Wei=\begin{psmallmatrix}
1&0\\0&0 \\0&\varepsilon
\end{psmallmatrix},
\\
&\Une= \begin{psmallmatrix}
0\\
0\\
0
\end{psmallmatrix},
\quad
&&\Uni= \begin{psmallmatrix}
0&0\\
0&0\\
0&0
\end{psmallmatrix},
&&\Uei= \begin{psmallmatrix}
0&0\\
0&0\\
0&-\delta
\end{psmallmatrix}.
\end{alignat*}
Then the determinant in \eqref{eq:cond-det} equals $\varepsilon\cdot(\alpha+\beta + \gamma)$. 
In case $\varepsilon=0$  the boundary conditions are essentially different and
in order to apply \autoref{cor:gen-V-W} one has to take
$k_0 = 3$, $k_1=2$. For accordingly modified boundary matrices this gives determinant $\delta\cdot(\alpha+\beta + \gamma)$.
Hence, if $(|\delta|+|\varepsilon|)\cdot(\alpha+\beta + \gamma)\ne0$, by \autoref{cor:gen-V-W} the problem is well-posed while for $\varepsilon = \delta=0$ it is clearly under-determined.
\end{exa}

\subsection{Boundary conditions via ``boundary spaces''}

We consider another way to impose conditions at the end points using two ``boundary spaces'' $Y_0,Y_1\subset\CC^{\ell+2m}$ and two operators
$\Be\in\sL(\Xe,\rL^p(\RRp,\CC^{\ell+2m}))$, $\Bi\in\sL(\Xin,\rL^p([0,1],\CC^{\ell+2m}))$ satisfying
\begin{equation}\label{eq:inv-sB}
\begin{aligned}
&\Be\,\WepRpCl\subseteq\rW^{1,p}\bigl(\RRp,\CC^{\ell+2m}\bigr),\\
&\Bi\,\WepneCm\subseteq\rW^{1,p}\bigl([0,1],\CC^{\ell+2m}\bigr).
\end{aligned}
\end{equation}
Then for $f=\binom{\fe}{\fin}\in\WzpRpCl\times\WzpneCm$ we consider the boundary conditions
\begin{equation}\label{eq:bc-Y}
\begin{psmallmatrix}
\fe(0)\\ \fin(0)\\ \fin(1)
\end{psmallmatrix}
\in Y_1,\quad
\begin{psmallmatrix}
\phantom{-}\ce(0)\cdot(\fe)'(0)\\ \phantom{-}\ci(0)\cdot(\fin)'(0)\\ -\ci(1)\cdot(\fin)'(1)
\end{psmallmatrix}
+(\Be\fe)(0)+(\Bi\fin)(0)\in Y_0.
\end{equation}
Applying \autoref{thm:gen-Lp}  
to this setting yields the following.

\begin{cor}\label{cor:net-Y}
Let $a(\p,\p)$ and $c(\p,\p)$ be given by \eqref{eq:rep-a(s)} and \eqref{eq:c(s)}, respectively. If for subspaces $Y_0,Y_1\subseteq\CC^{\ell+2m}$ we have
\begin{equation}\label{eq:intersec-0}
Y_0\oplus Y_1=\CC^{\ell+2m},
\end{equation}
then for every $\Be\in\sL(\Xe,\rL^p(\RRp,\CC^{\ell+2m}))$, $\Bi\in\sL(\Xin,\rL^p([0,1],\CC^{\ell+2m}))$ satisfying \eqref{eq:inv-sB}  the operator
\begin{equation}\label{eq:def-G-Y}
\begin{aligned}
G&:=a(\p,\p)\cdot\tddss,\\
D(G)&:=\Bigl\{f=\tbinom{\fe}{\fin}\in\WzpRpCl\times\WzpneCm:f\text{ satisfies \eqref{eq:bc-Y}}\Bigr\},
\end{aligned}
\end{equation}
generates a cosine family on $X=\LpRpCl\times\LpneCm$ with phase space $V\times X$ for 
\[
V:=\l\{f\in\WepneCm:\bigl(\fe(0),\fin(0),\fin(1)\bigr)^\top\in Y_1\r\}.
\]
\end{cor}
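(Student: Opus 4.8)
The plan is to derive the statement from \autoref{cor:gen-V-W}, encoding the two boundary subspaces by boundary matrices. Put $k_0:=\dim Y_0$ and $k_1:=\dim Y_1$, so that \eqref{eq:intersec-0} forces $k_0+k_1=\ell+2m$, exactly the dimension bookkeeping required in \eqref{eq:bound-matr}. Since $Y_0\oplus Y_1=\CC^{\ell+2m}$, I would pick surjective linear maps $\Pi_0\colon\CC^{\ell+2m}\to\CC^{k_0}$ and $\Pi_1\colon\CC^{\ell+2m}\to\CC^{k_1}$ with $\ker\Pi_0=Y_1$ and $\ker\Pi_1=Y_0$ (e.g.\ an isomorphism $Y_j\cong\CC^{k_j}$ composed with the projection onto $Y_j$ along the complementary summand). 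Splitting $\CC^{\ell+2m}=\CC^{\ell}\times\CC^m\times\CC^m$ according to (external value at $0$, internal value at $0$, internal value at $1$), I decompose these maps into blocks $\Pi_0=\bigl(\Vne\mid\Vni\mid\Vei\bigr)$ and $\Pi_1=\bigl(\Wneb\mid\Wnib\mid\Weib\bigr)$.

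Next I would set $\Wne:=\Wneb\,\ce(0)$, $\Wni:=\Wnib\,\ci(0)$ and $\Wei:=\Weib\,\ci(1)$, so that the abbreviations preceding \autoref{cor:gen-V-W} return precisely the blocks of $\Pi_1$, and I would feed the compositions $\Pi_1\Be$ and $\Pi_1\Bi$ into \autoref{cor:gen-V-W} in the role of its operators $\Be,\Bi$. As $\Pi_1$ is a constant matrix, it commutes with the point evaluations and maps $\rW^{1,p}$-functions to $\rW^{1,p}$-functions, so \eqref{eq:inv-sB} guarantees $\Pi_1\Be\colon\WepRpCl\to\rW^{1,p}(\RRp,\CC^{k_1})$ and $\Pi_1\Bi\colon\WepneCm\to\rW^{1,p}([0,1],\CC^{k_1})$, as \autoref{cor:gen-V-W} demands. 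With these choices, applying $\Pi_0$ to the first relation in \eqref{eq:bc-Y} rewrites the membership $(\fe(0),\fin(0),\fin(1))^\top\in Y_1=\ker\Pi_0$ as $\Vne\fe(0)+\Vni\fin(0)+\Vei\fin(1)=0$, while applying $\Pi_1$ to the second relation rewrites the membership in $Y_0=\ker\Pi_1$ as exactly the second line of \eqref{eq:BC-gen}, the weights $\ce(0),\ci(0),\ci(1)$ being absorbed into $\Wne,\Wni,\Wei$ and the additive term becoming $(\Pi_1\Be\fe)(0)+(\Pi_1\Bi\fin)(0)$. Thus the operator $G$ of \eqref{eq:def-G-Y} coincides with the operator \eqref{eq:def-G-delta} associated with these data.

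It then remains to check the determinant condition \eqref{eq:cond-det}. By construction $\Wneb,\Weib,\Wnib$ are the blocks of $\Pi_1$, so the matrix appearing in \eqref{eq:cond-det} is obtained from the square matrix $\binom{\Pi_0}{\Pi_1}\in\rM_{\ell+2m}(\CC)$ merely by interchanging its last two block-columns; hence the two determinants agree up to sign. Since $\ker\binom{\Pi_0}{\Pi_1}=\ker\Pi_0\cap\ker\Pi_1=Y_1\cap Y_0=\{0\}$ by \eqref{eq:intersec-0}, the stacked matrix is invertible and \eqref{eq:cond-det} holds. \autoref{cor:gen-V-W} now gives that $G$ generates a cosine family on $X$ with phase space $V\times X$, where $V=\{f\in\WepRpCl\times\WepneCm:\Vne\fe(0)+\Vni\fin(0)+\Vei\fin(1)=0\}=\{f:(\fe(0),\fin(0),\fin(1))^\top\in Y_1\}$ is exactly the asserted space. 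I expect the only genuine work to lie in this middle step — matching the two presentations of the boundary conditions and confirming that the auxiliary operators inherit the regularity \eqref{eq:inv-sB}; once the data are set up, the generation statement is immediate from \autoref{cor:gen-V-W}, and no direct manipulation of the operator $\sRtn$ from \eqref{eq:def-sR} (with its conjugating factors $q$) is needed.
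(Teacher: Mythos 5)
Your proof is correct, but it takes a different route from the paper. The paper proves \autoref{cor:net-Y} directly from \autoref{thm:gen-Lp}: it introduces the projection $P$ onto $Y_1$ along $Y_0$, writes the boundary conditions as $\Phin f=0$, $\Phie(f'+Bf)=0$ with $\Phin:=(Id-P)\cdot(\delta_0;\delta_0;\delta_1)$, $\Phie:=P\cdot(\ce(0)\delta_0;\ci(0)\delta_0;-\ci(1)\delta_1)$ and an explicitly constructed $B$ (built from the blocks of $\Be,\Bi$ via restriction/extension operators and the interpolating weights $\eins-\sbb$ and $\sbb$), and then computes $\sRtn$ from \eqref{eq:def-sR} to be the constant invertible matrix $\begin{psmallmatrix}\qe(0)&0&0\\0&0&\qi(0)\\0&-\qi(1)&0\end{psmallmatrix}$. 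You instead reduce the statement to \autoref{cor:gen-V-W} by realizing $Y_1=\ker\Pi_0$ and $Y_0=\ker\Pi_1$ for surjective matrices $\Pi_0,\Pi_1$, reading off the boundary matrices as the blocks of $\Pi_0$ and $\Pi_1$ (with the weights $\ce(0),\ci(0),\ci(1)$ absorbed into $\Wne,\Wni,\Wei$ so that the barred matrices of \autoref{cor:gen-V-W} are exactly the blocks of $\Pi_1$), and replacing $\Be,\Bi$ by $\Pi_1\Be,\Pi_1\Bi$; the determinant condition \eqref{eq:cond-det} then follows because the stacked matrix $\binom{\Pi_0}{\Pi_1}$ has kernel $Y_1\cap Y_0=\{0\}$ and the matrix in \eqref{eq:cond-det} differs from it only by a permutation of block-columns. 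All the details check out: the constant matrix $\Pi_1$ preserves the $\rW^{1,p}$-regularity required of the boundary operators, the two descriptions of $D(G)$ coincide, and the phase spaces agree. Your argument is shorter because the construction of $B$ and the computation of $\sRtn$ were already carried out in the proof of \autoref{cor:gen-V-W} and need not be repeated; what it gives up is the explicit, coordinate-free form of $\Phin,\Phie$ and of $\sRtn$ in terms of the projection $P$, which the paper's direct treatment exhibits and which makes the two corollaries independently usable templates. Either way the generation statement follows, and no direct manipulation of \eqref{eq:def-sR} is needed in your version.
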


\begin{proof} Using that for $I=[0,1]$ or $\RRp$ we have $\rL^p(I,\CC^{\ell+2m})=\rL^p(I,\CC^{\ell})\times\rL^p(I,\CC^m)\times\rL^p(I,\CC^m)$ we decompose $\Be$ and $\Bi$ accordingly, i.e., we write
\begin{align*}
&\Be=
\begin{psmallmatrix}
\Be_1\\ \Be_2\\ \Be_3
\end{psmallmatrix}
:\LpRpCl\to\LpRpCl\times\rL^p(\RRp,\CC^m)\times\rL^p(\RRp,\CC^m),\\
&\Bi=
\begin{psmallmatrix}
\Bi_1\\ \Bi_2\\ \Bi_3
\end{psmallmatrix}
:\LpneCm\to\rL^p([0,1],\CC^{\ell})\times\LpneCm\times\LpneCm.
\end{align*}
Denote by $\R\in\sL(\rL^p(\RRp,\CC^{m}),\rL^p([0,1],\CC^{m}))$ the restriction operator, i.e. $\R f:=f|_{[0,1]}$ for $f\in\rL^p(\RRp,\CC^{m})$. Moreover,
let $E\in\sL(\rL^p([0,1],\CC^{m}),\rL^p(\RRp,\CC^{m}))$ be an extension operator, i.e. $\R Eg=g$ for all $g\in\rL^p([0,1],\CC^{m})$, such that $E\,\rW^{1,p}([0,1],\CC^{m})\subset\rW^{1,p}(\RRp,\CC^{m})$.
Finally, we consider the projection $P\in\sL(\CC^{\ell+2m})$ associated to the representation \eqref{eq:intersec-0}, that is $\ker(P)=\rg(Id-P)=Y_0$ and $\rg(P)=\ker(Id-P)=Y_1$. Then $f=\binom{\fe}{\fin}\in\WzpRpCl\times\WzpneCm$ satisfies \eqref{eq:bc-Y} if and only if $\Phin f=0$ and $\Phie(f'+Bf)=0$ for
\begin{align*}
\Phin:=&(Id-P)\cdot
\begin{psmallmatrix}
\delta_0&0\\0&\delta_0\\0&\delta_1
\end{psmallmatrix}
\in\sL\bigl(\CnRpCl\times\CneCm,Y_0\bigr),\\
\Phie:=&P\cdot
\begin{psmallmatrix}
\phantom{-}\ce(0)\cdot\delta_0&0\\0&\phantom{-}\ci(0)\cdot\delta_0\\0&-\ci(1)\cdot\delta_1
\end{psmallmatrix}
\in\sL\bigl(\CnRpCl\times\CneCm,Y_1\bigr),\\
B:=&
\begin{psmallmatrix}
\ce(0)^{-1}\cdot\Be_1& \ce(0)^{-1}\cdot E\Bi_1\\[2pt]
(\eins-\sbb)\cdot\ci(0)^{-1}\cdot\R\Be_2-\sbb\cdot\ci(1)^{-1}\cdot\psi\cdot\R\Be_3
&\ \,(\eins-\sbb)\cdot\ci(0)^{-1}\cdot\Bi_2-\sbb\cdot\ci(1)^{-1}\cdot\psi\cdot\Bi_3
\end{psmallmatrix}
\in\sL(\Xe\times\Xin),
\end{align*}
where $\sbb(s)=s$, $\eins(s)=1$ for $s\in[0,1]$ and $\psi(h):=h(1-\p)$ for $h\in\Xin$. Note that $B$ leaves $\WepRpCl\times\WepneCm$ invariant, hence \autoref{asu:s-asu-Lp} is satisfied. Now choose $\tn:=\min\{\varphi_1(1),\ldots,\varphi_m(1)\}>0$.
Then a simple computation using \eqref{eq:Rt-St-01} and
\[
\Phieb=\Phie\cdot c(\p,\p)^{-1}=
(\Phiebe,\Phiebi)
=P\cdot
\begin{psmallmatrix}
\phantom{-}\delta_0&0\\0&\phantom{-}\delta_0\\0&-\delta_1
\end{psmallmatrix}
\in\sL\bigl(\CnRpCl\times\CneCm,Y_1\bigr)
\]
yields that $\sRtn$ in \eqref{eq:def-sR} is constant and given by
\[
\sRtn=
\begin{psmallmatrix}
\qe(0)&0&0\\
0&0&\qi(0)\\ 0&-\qi(1)&0
\end{psmallmatrix}
\in\sL\bigl(\rL^p\bigl([0,\tn],\CC^{\ell+2m}\bigr)\bigr).
\]
Hence, $\sRtn$ is invertible and \autoref{thm:gen-Lp} implies the claim.
\end{proof}

We give two possible choices for the operators $\Be$, $\Bi$ appearing in the boundary condition \eqref{eq:bc-Y}.

\begin{exa} 
\begin{enumerate}[(i), wide=12pt]
\item 
For matrices $\Une\in\rM_{(\ell+2m) \times \ell}(\CC)$, $\Uni,\Uei\in\rM_{(\ell+2m)\times m}(\CC)$ define 
\begin{align*}
&\Be:=\Une\in\sL\bigl(\Xe,\rL^p(\RRp,\CC^{\ell+2m})\bigr),\\
&\Bi:=\Uni+\Uei\cdot\psi\in\sL\bigl(\Xin,\rL^p(\RRp,\CC^{\ell+2m})\bigr).
\end{align*}
Then $\Be,\Bi$ satisfy \eqref{eq:inv-sB} and  for $f=\binom{\fe}{\fin}\in\WzpRpCl\times\WzpneCm$ the second boundary condition in \eqref{eq:bc-Y} simplifies to
\begin{equation*}
\begin{psmallmatrix}
\phantom{-}\ce(0)\cdot(\fe)'(0)\\ \phantom{-}\ci(0)\cdot(\fin)'(0)\\ -\ci(1)\cdot(\fin)'(1)
\end{psmallmatrix}
+\Une\fe(0)+\Uni\fin(0)+\Uei\fin(1)
\in Y_0.
\end{equation*}
This generalizes for example the boundary conditions considered in \cite[Sect.~6.5]{Mug:14}, see also \autoref{exa:BC-Y}.

\item  For operators $\Te\in\sL(\LpRpCl,\CC^{\ell+2m})$ and $\Ti\in\sL(\LpneCm,\CC^{\ell+2m})$ define
\begin{alignat*}{3}
&\Be\in\sL\bigl(\Xe,\rL^{p}(\RRp,\CC^{\ell+2m})\bigr),&&
\quad (\Be\fe)(s)=e^{-s}\cdot \Te\fe,\\
&\Bi\in\sL\bigl(\Xin,\rL^{p}([0,1],\CC^{\ell+2m})\bigr),&&
\quad (\Bi\fin)(s)\equiv \Ti\fin.
\end{alignat*}
Then again $\Be,\Bi$ satisfy \eqref{eq:inv-sB}  and  for $f=\binom{\fe}{\fin}\in\WzpRpCl\times\WzpneCm$ the second boundary condition in  \eqref{eq:bc-Y} simplifies to
\begin{equation*}
\begin{psmallmatrix}
\phantom{-}\ce(0)\cdot(\fe)'(0)\\ \phantom{-}\ci(0)\cdot(\fin)'(0)\\ -\ci(1)\cdot(\fin)'(1)
\end{psmallmatrix}
+\Te\fe+\Ti\fin
\in Y_0.
\end{equation*}
\end{enumerate}
\end{exa}

\section{Applications to waves and diffusion on metric graphs}
\label{sec:W&D}

\subsection{Introduction}

In this section we use our abstract results to show the well-posedness of wave- and diffusion equations on networks. That is, we study first and second order abstract initial-boundary value problems of the form \eqref{eq:acp}. The structure of the graph is encoded in the boundary conditions contained in the domain $D(G)$. 

\smallskip
We consider a finite metric graph (network) with $n$ vertices $\mv_1,\dots, \mv_n$,  $m$ \emph{internal} edges $\mei_1,\dots,\mei_m$, which we parametrize on the unit interval $[0,1]$, and $\ell$ \emph{external} edges $\mee_1,\dots,\mee_{\ell}$, parametrized on the half-line $\RR_+$. A graph without external edges is called \emph{compact}.
The structure of the graph is given by the $n\times m$ \emph{internal
incidence matrices} $\Phi^{i,-}:=(\varphi^{i,-}_{rs})$,  and   $\Phi^{i,+}:=(\varphi^{i,+}_{rs})$, where
\begin{equation*}
 \varphi^{i,-}_{rs} := 
 \begin{cases}
  1, & \text{if } \mei_s(0) = \mv_r,\\
  0, & \text{otherwise},
 \end{cases}
 \qquad 
\text{and}
\qquad 
 \varphi^{i,+}_{rs} := 
 \begin{cases}
  1, & \text{if } \mei_s(1) = \mv_r,\\
  0, & \text{otherwise},
 \end{cases}
\end{equation*}
and the $n\times \ell$ \emph{external incidence matrix} $\Phi^{e,-}:=(\varphi^{e,-}_{rs})$, where
\begin{equation*}
 \varphi^{e,-}_{rs} := 
 \begin{cases}
  1, & \text{if } \mee_s(0) = \mv_r,\\
  0, & \text{otherwise}.
 \end{cases}
\end{equation*}
By using the incidence matrices we obtain the diagonal matrices with in-  and out-degrees of all vertices on the diagonal as
\begin{equation}\label{eq:degrees}
D^{\dagger}:=\Phi^{\dagger} (\Phi^{\dagger})^{\top}, \quad  \dagger\in\bigl\{(i,-),(i,+),(e,-)\bigr\},
\end{equation}
and the joint vertex degree matrix 
\begin{equation}\label{eq:matrixD}
D:= D^{i,-} + D^{i,+}+ D^{e,-}= \diag\bigl(\textrm{deg}(\mv_r)\bigr)_{r=1}^{n}\in\rM_n(\CC).
\end{equation}

\smallskip
The diffusion- and wave equation on a metric graph is defined by considering
on each edge the heat equation
\begin{equation}\label{eq:net-dif}
\begin{aligned}
\frac{d}{dt}\, u^i_j(t,s) &= \ai_j(s)\cdot \frac{d^2}{ds^2}\, u^i_j(t,s),\quad t\ge 0,\  s\in(0,1),\quad j=1,\dots,m ,\\
\frac{d}{dt}\, u^e_k(t,s) &= \ae_k(s)\cdot \frac{d^2}{ds^2}\, u^e_k(t,s),\quad t\ge 0,\ s>0, \quad k=1,\dots,\ell,
\end{aligned}
\end{equation}
or the wave equation
\begin{equation}\label{eq:net-wave}
\begin{aligned}
\frac{d^2}{dt^2}\, u^i_j(t,s) &= \ai_j(s)\cdot \frac{d^2}{ds^2}\, u^i_j(t,s),\quad t\ge 0,\  s\in(0,1),\quad j=1,\dots,m ,\\
\frac{d^2}{dt^2}\, u^e_k(t,s) &= \ae_k(s)\cdot \frac{d^2}{ds^2}\, u^e_k(t,s),\quad t\ge 0,\  s>0, \quad k=1,\dots,\ell,
\end{aligned}
\end{equation}
respectively, for some Lipschitz continuous functions $\aek(\p)\in\rC(\RRp)$, $\aij(\p)\in\rC[0,1]$ for $k=1,\ldots,\ell$, $j=1,\ldots,m$, satisfying
\eqref{eq:aek-beschr}.
Additionally, one needs to impose some transmission conditions in the vertices.  These types of problems for compact graphs were studied for example in \cite{KMS:07}. 

Next we present some types of these transmission conditions and show how our results apply in these examples.

\subsection{Standard conditions}\label{sec:standard}

The most natural assumption for the solutions to either heat or wave equations on a metric graph is continuity in the vertices. 
We say that a function $f=\binom{\fe}{\fin}\in\rC(\RR_+,\CC^{\ell})\times\rC([0,1],\CC^{m})$, defined on the edges of the graph, is \emph{continuous on the graph} if its values at the endpoints of the contiguous edges coincide, i.e., whenever two edges $\me_j$ and $\me_k$ (both internal, both external or mixed)  have a common vertex $\mv$ then for the appropriate functions it holds $f_j(\mv) = f_k(\mv)$. Here, $f_j(\mv):=f_j(s)$ if $\me_j(s)=\mv$ for $s=0$ or $s=1$. 
A direct computation shows that the continuity property of $f$ can be easily expressed using the incidence matrices as
\begin{equation}\label{eq:cont-c}
\exists\, c\in \CC^n \text{ such that } (\Phi^{e,-})^\top c = \fe(0),\; (\Phi^{i,-})^\top c = \fin(0) \text{ and } (\Phi^{i,+})^\top c = \fin(1)
\end{equation}
which is equivalent to 
\begin{equation}\label{eq:cont-cond}
\begin{psmallmatrix}
\fe(0)\\ \fin(0) \\ \fin(1)
\end{psmallmatrix}
\in\rg
\begin{psmallmatrix}
(\Phi^{e,-})^\top \\ (\Phi^{i,-})^\top \\  (\Phi^{i,+})^\top
\end{psmallmatrix}.
\end{equation}

\smallskip
Furthermore, in each of the vertices $\mv_r$, $r=1,\dots,n$, we infer  the standard Kirchhoff (also called Neumann) conditions 
\begin{equation*}
\sum_{\me_j\in\Gamma(\mv_r)} \lambda_j(\mv_r)\cdot \frac{\partial f_j}{\partial s}(\mv_r)=0,
\end{equation*}
where $\Gamma(\mv_r)$ denotes the set of all edges incident to the vertex $\mv_r$ and $\frac{\partial f_j}{\partial s_j}(\mv_r)$ is the normal derivative of $f_j$ computed at the appropriate endpoint of the edge $\me_j$. 
Using incidence matrices we can express this condition more accurately as
\begin{equation*}\label{eq:Kirch}
\sum_{k=1}^{\ell} \varphi^{e,-}_{rk}\cdot \aek(0)\cdot (\fe)'_k(0) +\sum_{j=1}^{m} \varphi^{i,-}_{rj}\cdot \aij(0)\cdot (\fin)'_j(0)  = \sum_{j=1}^{m} \varphi^{i,+}_{rj}\cdot \aij(1) \cdot(\fin)'_j(1).
\end{equation*}
Moreover, letting
\begin{alignat*}{3}
&\ae(s):=\diag\bigl(\aek(s)\bigr)_{k=1}^{\ell}\in\rM_{\ell}(\CC),&\quad&s\in\RRp,\\
&\ai(s):=\diag\bigl(\aij(s)\bigr)_{j=1}^m\in\rM_m(\CC),&\quad&s\in[0,1],
\end{alignat*}
we can rewrite the Kirchhoff condition in matrix form as
\begin{equation}\label{eq:trans-cond}
\Phi^{e,-}\cdot \ae(0)\cdot (\fe)'(0) +\Phi^{i,-}\cdot\ai(0)\cdot (\fin)'(0) =  \Phi^{i,+}\cdot \ai(1)\cdot (\fin)'(1).
\end{equation}
Let $a(\p,\p) := \begin{psmallmatrix}
\ae(\ps)&0\\0&\ai(\ps)
\end{psmallmatrix}$ and
\begin{equation}\label{eq:G-networks}
\begin{aligned}
G&:=a(\p,\p)\cdot\tddss,\\
D(G)&:=\bigl\{f=\tbinom{\fe}{\fin}\in\WzpRpCl\times\WzpneCm:f\text{ satisfies  \eqref{eq:cont-cond} and \eqref{eq:trans-cond}}\bigr\}.
\end{aligned}
\end{equation}
Then the diffusion- and wave equations on a network transform into the abstract Cauchy problems given in \eqref{eq:acp}.
We will see that by \autoref{cor:net-Y}  both problems are well-posed.
\smallskip

First we show how  the boundary conditions in the domain $D(G)$ can be written as \eqref{eq:bc-Y} for spaces $Y_0,Y_1$ satisfying \eqref{eq:intersec-0}. 
To this end we define
\begin{equation}\label{eq:Y1-cont}
Y_1:=\rg\begin{psmallmatrix}
(\Phi^{e,-})^\top \\ (\Phi^{i,-})^\top \\  (\Phi^{i,+})^\top
\end{psmallmatrix}
\quad\text{and}\quad\Be=\Bi:=0.
\end{equation}
Moreover, we note that for 
$\ce(\ps):=\sqrt{\ae(\ps)}$ and $\ci(\ps)):=\sqrt{\ai(\ps)}$,
\begin{align*}
\eqref{eq:trans-cond}
\quad&\iff\quad 
\begin{psmallmatrix}
\phantom{-}\ce(0)(\fe)'(0)\\
\phantom{-}\ci(0)(\fin)'(0)\\
- \ci(1)(\fin)'(1)
\end{psmallmatrix}
\in\ker \left(\Phi^{e,-} \ce(0), \Phi^{i,-}\ci(0),\Phi^{i,+}\ci(1)\right)\\
&\iff\quad
\begin{psmallmatrix}
\phantom{-}\ce(0)(\fe)'(0)\\
\phantom{-}\ci(0)(\fin)'(0)\\
- \ci(1)(\fin)'(1)
\end{psmallmatrix}
\in\rg\begin{psmallmatrix}
\ce(0)(\Phi^{e,-})^\top\\
\ci(0)(\Phi^{i,-})^\top\\
\ci(1)(\Phi^{i,+})^\top
\end{psmallmatrix}^\bot
=:Y_0.
\end{align*}
Hence, $Y_0=C Y_1^\bot$ for the invertible diagonal matrix $C:=\diag(\ce(0)^{-1},\ci(0)^{-1}, \ci(1)^{-1})$.
Next we verify that $Y_0\cap Y_1=\{0\}$. Let $y\in Y_0\cap Y_1$.
Then there exists $z\in Y_1^\bot$ such that  $y=C z$ which gives
\[
0=\l<y,z\r>=\l<C z,z\r>.
\]
Since $C$ is positive definite, we conclude that indeed $z=0=y$. Moreover, we have $$\dim(Y_0)+\dim(Y_1)=\dim(Y_1^\bot)+\dim(Y_1)=2m+\ell.$$ This implies \eqref{eq:intersec-0} and hence  \autoref{cor:net-Y} applies.

\subsection{$\delta$-type conditions}
This condition appears in the literature on quantum graphs, see \cite{BK13}.  It consists of the continuity condition  \eqref{eq:cont-c} and the condition 
\begin{equation*}
\sum_{\me_j\in\Gamma(\mv_r)} \lambda_j(\mv_r)\cdot \frac{\partial f_j}{\partial s}(\mv_r)=\alpha_r\cdot f(\mv_r),
\end{equation*}
in every vertex $\mv_r$, $r=1,\dots, n$.
Here $f(\mv_r)$ denotes the common value of the functions $f_j$ corresponding to the edges  $\me_j\in\Gamma(\mv_r)$ that meet in  vertex $\mv_r$, and $\alpha_r$ are some fixed complex coefficients.  
Again we can  rewrite this using incidence matrices as 
\begin{equation*}\label{eq:delta}
\sum_{k=1}^{\ell} \varphi^{e,-}_{rk}\cdot \aek(0) \cdot(\fe)'_k(0) +\sum_{j=1}^{m} \varphi^{i,-}_{rj}\cdot \aij(0)\cdot (\fin)'_j(0)  - \sum_{j=1}^{m} \varphi^{i,+}_{rj} \cdot\aij(1)\cdot (\fin)'_j(1) = \alpha_r\cdot c_r,
\end{equation*}
$r=1,\dots, n$, where $c=(c_1,\dots,c_n)^{\top}$ is the vector appearing in the continuity condition \eqref{eq:cont-c}.
In order to obtain the appropriate matrix form  first note that by 
\eqref{eq:cont-c}, \eqref{eq:degrees} and \eqref{eq:matrixD} we have
\begin{equation*}
D c =  \Phi^{e,-} \fe(0) + \Phi^{i,-}\fin(0) +  \Phi^{i,+} \fin(1).
\end{equation*}
Let  $L :=\diag(\alpha_r)_{r=1}^{n}\in\rM_n(\CC)$. Since every column of an incidence matrix consists of exactly one nonzero entry corresponding to the appropriate endpoint of an edge, there are $m\times m$ and $\ell\times\ell$ diagonal matrices $\tilde{D}^{\dagger}$,  such that 
\begin{equation*}
LD^{-1} \Phi^{\dagger} = \Phi^{\dagger} \tilde{D}^{\dagger},\quad \dagger\in\bigl\{(i,-),(i,+),(e,-)\bigr\}.
\end{equation*}
Hence we can rewrite $\delta$-type conditions in the matrix form as
\begin{equation}\label{eq:delta-cond}
\begin{aligned}
&\Phi^{e,-}\cdot \ae(0)\cdot (\fe)'(0) + \Phi^{i,-}\cdot\ai(0) \cdot(\fin)'(0)- \Phi^{i,+}\cdot \ai(1) \cdot(\fin)'(1) \\
&\qquad=  \Phi^{e,-}\cdot \tilde{D}^{e,-}\cdot\fe(0) + \Phi^{i,-}\cdot\tilde{D}^{i,-}\cdot\fin(0) +  \Phi^{i,+}\cdot \tilde{D}^{i,+}\cdot\fin(1).
\end{aligned}
\end{equation}
Defining $Y_0$ and $Y_1$ as in \autoref{sec:standard} and the operators
\begin{equation*}
\Be:= - \begin{psmallmatrix}
\ce(0)^{-1}\tilde{D}^{e,-}\\ 0\\ 0
\end{psmallmatrix}
\quad\text{and}\quad
\Bi:= - \begin{psmallmatrix}
0\\ \ci(0)^{-1}\tilde{D}^{i,-}\\ \ci(1)^{-1}\tilde{D}^{i,+}\psi
\end{psmallmatrix},
\end{equation*}
where $\psi(h):=h(1-\p)$, 
our boundary conditions are of the form \eqref{eq:bc-Y} and \autoref{cor:net-Y} applies again.

\subsection{Non-local boundary conditions} We now further generalize the standard boundary conditions, taking the continuity condition \eqref{eq:cont-cond} together with the condition
\begin{equation}\label{eq:nonlocal}
\begin{aligned}
&\Phi^{e,-}\cdot \ae(0)\cdot (\fe)'(0) + \Phi^{i,-}\cdot\ai(0) \cdot(\fin)'(0)- \Phi^{i,+}\cdot \ai(1) \cdot(\fin)'(1) \\
&\qquad=  \Phi^{e,-}\cdot M^{e}\cdot\fe(0) + \Phi^{i,-}\cdot M^{i,-}\cdot\fin(0) +  \Phi^{i,+}\cdot M^{i,+}\cdot\fin(1),
\end{aligned}
\end{equation}
for some matrices $M^e\in M_{\ell}(\mathbb{C})$ and  $M^{i,-},M^{i,+} \in M_{m}(\mathbb{C})$. Note that in this way the Kirchhoff conditions in a vertex are supplemented with a linear combination of values in some other -- even non-adjacent -- vertices. This models, for example, a network, in which some nodes are able to communicate instantly and directly via another network, atop of the one under consideration. 
To treat this case we may again define $Y_0$ and $Y_1$ as in \autoref{sec:standard}, take the boundary operators
\begin{equation*}
\Be:= - \begin{psmallmatrix}
\ce(0)^{-1} M^{e}\\ 0\\ 0
\end{psmallmatrix}
\quad\text{and}\quad
\Bi:= - \begin{psmallmatrix}
0\\ \ci(0)^{-1} M^{i,-}\\ \ci(1)^{-1} M^{i,+}\psi
\end{psmallmatrix},
\end{equation*}
with $\psi(h):=h(1-\p)$ and apply \autoref{cor:net-Y}.

\subsection{Matrix mixed conditions}
Motivated by applications in population dynamics, in \cite{BFN:16a,BFN:16} a diffusion problem on a compact network with the general boundary condition
\begin{equation*}
\tbinom{f'(0)}{f'(1)}= \mathbb{K} \tbinom{f(0)}{f(1)}
\end{equation*}
for a matrix $\mathbb{K}\in\rM_{2m}(\CC)$ is considered.
In this case \autoref{cor:net-Y}  applies directly by choosing $\ell = 0$, $a(\p,\p) \equiv\Id$, $Y_0 := \{0\}$, $Y_1:=\CC^{2m}$ and
\begin{equation*}
\Bi:=-  \begin{psmallmatrix}Id & 0\\ 0& -Id\end{psmallmatrix}\cdot\mathbb{K}\cdot\tbinom{Id}{\psi},
\end{equation*}  
where as usual $\psi f(\p):=f(1-\p)$.

\subsection{Generalized node conditions}\label{exa:BC-Y}
In \cite[Sect.~6.5]{Mug:14} the boundary condition 
\begin{equation}\label{eq:bc-Y1}
\tbinom{f(0)}{f(1)}\in Y,\quad
\tbinom{-\lambda(0)\cdot f'(0)}{\phantom{-}\lambda(1)\cdot f'(1)}+W\tbinom{f(0)}{f(1)}\in\Yl
\end{equation}
appears for compact graphs, where $Y\subseteq\CC^{2m}$ and $W\in\sL(Y)$. We show that also this condition fits in the setting of \autoref{cor:net-Y} for $\ell=0$. To this end we define $Y_1:=Y$, $Y_0:=C\,\Yl$  for
\[
C:=\diag\bigl(\mu(0)^{-1},\mu(1)^{-1}\bigr) \quad\text{and}\quad \Bi:=CW\tbinom{Id}{\psi},
\] 
where $\mu(\p):=\sqrt{\lambda(\p)}$, $\psi f(\p):=f(1-\p)$. Then a simple computation shows that for these choices \eqref{eq:bc-Y1} is equivalent to \eqref{eq:bc-Y}.
Next, the representation $Y_0:=C\, Y_1^\bot$ for positive definite $C$ implies by the same reasoning as at the end of \autoref{sec:standard} condition \eqref{eq:intersec-0}. Hence, \autoref{cor:net-Y} applies to the operator $G=a(\p)\cdot\ddss$ satisfying the boundary conditions \eqref{eq:bc-Y1}. Moreover, this condition  can be easily generalized  to the non-compact metric graphs.

\appendix

\section{}
\label{app:dp}

\subsection{Domain perturbation for generators of $\mathbf{C_0}$-semigroups}
\label{sec:dom-pert}
In this appendix we briefly recall a perturbation result from \cite[Sect.~4.3]{ABE:13} which is our main tool to prove \autoref{thm:gen-Lp} (similar see also \cite{Had:05,HMR:15}).
Moreover, we give an admissibility criterion which significantly simplifies the computation of the so-called controllability- and input-output maps. 
%\smallbreak
To explain the general setup we consider
\begin{itemize}
\item two Banach spaces $X$ and $\partial X$, called ``state'' and ``boundary'' spaces, respectively;
\item a closed, densely defined ``maximal'' operator\footnote{``maximal'' concerns the size of the domain, e.g., a differential operator without boundary conditions.} $A_m:D(A_m)\subseteq X\to X$;
\item the Banach space $[D(A_m)]:=(D(A_m),\|\p\|_{A_m})$ where $\|f\|_{A_m}:=\|f\|+\|A_mf\|$ is the graph norm;
\item two ``boundary'' operators $L,C\in\sL([D(A_m)],\partial X)$.
\end{itemize}

Then define two restrictions $A,\,G\subset A_m$ by
\begin{align}
D(A):&=\bigl\{f\in D(A_m):Lf=0\bigr\}=\ker(L),\notag\\
D(G):&=\bigl\{f\in D(A_m):Lf=C f\bigr\}=\ker(L-C)\label{eq:def-G}.
\end{align}
Hence, one can consider $G$ with boundary condition $Lf=C f$ as a perturbation of the operator $A$ with abstract ``Dirichlet type'' boundary condition $Lf=0$.
In order to proceed we make the following 

\begin{asu}\label{asu:BP}
\makeatletter
\hyper@anchor{\@currentHref}%
\makeatother
\begin{enumerate}[(i), wide=12pt]
\item The operator $A$ generates a $C_0$-semigroup $\Tt$ on $X$;
\item the boundary operator $L:D(A_m)\to\partial X$ is surjective.
\end{enumerate}
\end{asu}

Under these assumptions the following lemma is shown in \cite[Lem.~1.2]{Gre:87}.

\begin{lem}\label{lem-Gre} Let \autoref{asu:BP} be
satisfied. Then for each $\lambda\in\rho(A)$ the operator $L|_{\ker(\lambda-A_m)}$ is invertible and
$L_\lambda:=(L|_{\ker(\lambda-A_m)})^{-1}:\partial X\to\ker(\lambda-A_m)\subseteq X$
is bounded.
\end{lem}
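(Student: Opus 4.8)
The plan is to show that the restriction $L|_{\ker(\lambda-A_m)}$ is a bounded bijection between Banach spaces and then to apply the open mapping theorem to conclude that its inverse $L_\lambda$ is bounded. The whole argument rests on one algebraic manipulation: any $g\in D(A_m)$ can be corrected into $\ker(\lambda-A_m)$ by subtracting $\RlA(\lambda-A_m)g$, which lies in $D(A)=\ker(L)$ and therefore does not affect the value of $L$.

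For injectivity, suppose $f\in\ker(\lambda-A_m)$ with $Lf=0$. Then $f\in\ker(L)=D(A)$ and $(\lambda-A)f=(\lambda-A_m)f=0$; since $\lambda\in\rho(A)$ this forces $f=0$. For surjectivity, fix $y\in\partial X$. By the surjectivity of $L$ (\autoref{asu:BP}(ii)) there is $g\in D(A_m)$ with $Lg=y$, and I would set $f:=g-\RlA(\lambda-A_m)g$. The correction term belongs to $D(A)$, so $Lf=Lg=y$, while a direct computation using $A\subset A_m$ gives $(\lambda-A_m)f=0$. Thus $f\in\ker(\lambda-A_m)$ with $Lf=y$, so $L|_{\ker(\lambda-A_m)}$ is onto.

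It remains to see that the inverse is bounded. Since $A_m$ is closed, $\lambda-A_m$ is closed and hence $\ker(\lambda-A_m)$ is a closed subspace of $X$, so it is a Banach space for the norm of $X$. On this subspace one has $A_m f=\lambda f$, so the graph norm satisfies $\|f\|_{A_m}=(1+|\lambda|)\,\|f\|$ and is thus equivalent to the $X$-norm; consequently the restriction of $L\in\sL([D(A_m)],\partial X)$ is bounded for the $X$-norm as well. Therefore $L|_{\ker(\lambda-A_m)}$ is a continuous bijection between the Banach spaces $\ker(\lambda-A_m)$ and $\partial X$, and the open mapping theorem yields a bounded inverse $L_\lambda$.

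I expect the only genuinely delicate point to be this last step: the open mapping theorem applies precisely because $\ker(\lambda-A_m)$ is closed, which in turn uses the closedness of $A_m$, and because the graph norm collapses to a multiple of the $X$-norm on the kernel. The surjectivity construction, though the most creative ingredient, is routine once the resolvent correction is written down.
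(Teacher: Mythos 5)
Your proof is correct. The paper does not prove this lemma itself but cites \cite[Lem.~1.2]{Gre:87}, and your argument (resolvent correction $g\mapsto g-R(\lambda,A)(\lambda-A_m)g$ for surjectivity, closedness of $\ker(\lambda-A_m)$ plus the collapse of the graph norm to $(1+|\lambda|)\|\cdot\|_X$ on the kernel, and the open mapping theorem for boundedness of the inverse) is precisely the standard argument given there.
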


In what follows, the extrapolated space $X_{-1}$ associated with $A$ 
is the completion of $X$ with respect to the norm
\[\|x\|_{-1}:= \|R(\lambda_0,A) x\|, \quad x \in X,\]
for some fixed $\lambda_0\in\rho(A)$,
$T_{-1}(t)\in\sL(\Xme)$ is the unique bounded extension of the operator $T(t)$ to $X_{-1}$, and $A_{-1}$ is the generator of the extrapolated semigroup $\left(T_{-1}(t)\right)_{t\ge 0}$ with domain $D(A_{-1})=X$. For more details on extrapolated spaces and semigroups we refer to \cite[Sect.~II.5.a]{EN:00}.

Now one can verify that the operator
\[
L_A:=(\lambda-\Ame)L_\lambda\in\sL(\dX,\Xme)
\]
is independent of $\lambda\in\rho(A)$ and that $G=(\Ame+L_A\cdot C)|_X$. Before stating the perturbation result \cite[Cor.~22]{ABE:13}, we note that from the assumptions~(i)--(iii) in \autoref{thm:pert-bc} below it follows that there exists a bounded ``\emph{input-output map}''
$\sF_{\tn}\in\sL(\rL^p([0,t_0],\dX))$ such that
\begin{equation}\label{sFt}
(\sF_{\tn} u)(\p)= C\int_0^\p T_{-1}(\p-s)L_A u(s)\ds
\quad\text{for all }u\in\rW_0^{2,p}([0,t_0],\dX),
\end{equation}
cf. \cite[Rem.~7]{ABE:13}. Now by \cite[Thm.~10]{ABE:13} the following holds.

\begin{thm}\label{thm:pert-bc}
Assume that there exist $1\le p<+\infty$, $\tn>0$ and $M\ge0$ such that
\begin{alignat*}{2}
\text{(i)}\quad&\int_0^{t_0} T_{-1}(t_0-s) L_A u(s)\ds \in X                    &&\text{for all }u\in\rL^p([0,t_0],\dX),\\
\text{(ii)}\quad&\int_0^{t_0}\bigl\|CT(s)x\bigr\|_\dX^p\ds \leq M\cdot\|x\|_X^p&&\text{for all }x\in D(A),\\
\text{(iii)}\quad&\int_0^{\tn}\Bigl\|C\int_0^r T_{-1}(r-s) L_A u(s)\ds\Bigr\|_\dX^p\dr\le M\cdot\|u\|_p^p&&\text{for all }u\in\rW_0^{2,p}([0,t_0],\dX),\\
\text{(iv)}\quad&1\in\rho(\sF_{\tn})\text{, where }\sF_{\tn}\in\sL\bigl(\rL^p([0,t_0],\dX)\bigr)\text{ is given by \eqref{sFt}}.&&
\end{alignat*}
Then $G\subset A_m$ given by \eqref{eq:def-G} generates a $C_0$-semigroup on the Banach space $X$.
\end{thm}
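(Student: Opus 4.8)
The plan is to realize $G$ as the generator of the closed loop of an abstract boundary control system, and to build the semigroup by hand through a feedback argument; the naive route of writing down $R(\lambda,G)$ via \autoref{lem-Gre} and checking Hille--Yosida bounds is avoided, since for a merely strongly continuous (non-analytic) semigroup controlling all powers $\|\lambda^n R(\lambda,G)^n\|$ is intractable, and it is exactly this difficulty that hypotheses~(i)--(iv) are designed to bypass. Starting from the identity $G=(\Ame+L_A\,C)|_X$ recorded before the statement, I would interpret the inhomogeneous problem $\dot x=\Ame x+L_A u$, $x(0)=x_0$, through its mild solution
\[
x(t)=T(t)x_0+\int_0^t T_{-1}(t-s)\,L_A u(s)\ds ,
\]
and isolate, over a fixed interval $[0,\tn]$, the three associated system maps: the controllability map $B_{\tn}u:=\int_0^{\tn}T_{-1}(\tn-s)L_A u(s)\ds$, the observability map $(\Psi_{\tn}x)(t):=C\,T(t)x$, and the input--output map $\sF_{\tn}$ of \eqref{sFt}. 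Hypotheses~(i), (ii) and (iii) are precisely the admissibility estimates yielding $B_{\tn}\in\sL(\rL^p([0,\tn],\dX),X)$, $\Psi_{\tn}\in\sL(X,\rL^p([0,\tn],\dX))$ (extended by density from $D(A)$, on which $C\,T(\p)x$ is meaningful), and $\sF_{\tn}\in\sL(\rL^p([0,\tn],\dX))$ (extended by density from $\rW^{2,p}_0$); a shift of the input argument in~(i) likewise gives $B_t\in\sL(\rL^p([0,t],\dX),X)$ for every $t\in[0,\tn]$.

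The boundary feedback $Lx=Cx$ that defines $D(G)=\ker(L-C)$ becomes, at the level of the output $y:=Cx$, the fixed-point relation $y=\Psi_{\tn}x_0+\sF_{\tn}y$. Here hypothesis~(iv), i.e.\ $1\in\rho(\sF_{\tn})$, is decisive: it solves this relation uniquely as $y=(\Id-\sF_{\tn})^{-1}\Psi_{\tn}x_0$ and thereby closes the loop. Feeding this output back as the input of the open-loop system, I would set, for $t\in[0,\tn]$,
\[
S(t)x_0:=T(t)x_0+\int_0^t T_{-1}(t-s)\,L_A\bigl[(\Id-\sF_{\tn})^{-1}\Psi_{\tn}x_0\bigr](s)\ds ,
\]
which by the boundedness of $B_t$, $\Psi_{\tn}$ and $(\Id-\sF_{\tn})^{-1}$ defines $S(t)\in\sL(X)$, uniformly bounded for $t\in[0,\tn]$.

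It then remains to show that $(S(t))_{t\in[0,\tn]}$ satisfies $S(0)=\Id$, is strongly continuous, and obeys the local semigroup law $S(t+r)=S(t)S(r)$ whenever $t+r\le\tn$; the last point rests on the causal (cocycle) compatibility of $T(\p)$, the family $B_t$, $\Psi_{\tn}$ and $\sF_{\tn}$ under time translation and concatenation of inputs, which is encoded in the Volterra structure of \eqref{sFt}. Together with the uniform bound this lets the local family be prolonged to a genuine $C_0$-semigroup on $X$ by the usual functional equation. Finally I would identify the generator with $G$: differentiating $S(\p)x_0$ at $0$ for $x_0\in D(G)$ exhibits the generator as an extension of $G$ acting by $A_m$, while matching the Laplace transform of $S(\p)$ with the Greiner-type resolvent formula for $(\lambda-\Ame-L_A C)|_X$ forces equality of the domains.

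I expect the real work to be in the algebra of the system maps underlying both the semigroup property and the generator identification: proving rigorously the composition identities that turn $S(t+r)$ into $S(t)S(r)$ for $\rL^p$-valued controls, and checking that the closed-loop output $y$ carries enough regularity for $C\,x(s)$ to make sense along the trajectory. Hypotheses~(i)--(iii) furnish exactly the admissibility bounds that keep these unbounded boundary objects under control, while~(iv) is what makes the closed loop solvable; the remainder is a careful bookkeeping of these maps under concatenation and time shift.
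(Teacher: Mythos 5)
The paper does not actually prove this statement: it is quoted as \cite[Thm.~10]{ABE:13} (see also \cite[Cor.~22]{ABE:13}), so the ``proof'' in the paper is a citation, and the only comparison available is with the argument of that reference. Your outline is precisely the closed-loop construction used there (and, more generally, in the Salamon--Weiss theory of well-posed linear systems): hypotheses (i)--(iii) make $\sB_{\tn}$, the observability map and $\sF_{\tn}$ bounded (for (i) one should say explicitly that boundedness into $X$ comes from the closed graph theorem, since (i) is a range condition rather than an estimate), hypothesis (iv) closes the loop via $y=(\Id-\sF_{\tn})^{-1}\sC_{\tn}x_0$, and the candidate semigroup is $S(t)x_0=T(t)x_0+\sB_t y$. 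So the approach is the right one and I see no step that would fail. What you have is, however, a roadmap rather than a proof: the two items you defer to ``bookkeeping'' --- the concatenation/causality identities for $\sB_t$, $\sC_{\tn}$, $\sF_{\tn}$ that yield the local semigroup law (including the fact that $1\in\rho(\sF_{\tn})$ for the single time $\tn$ forces invertibility of $\Id-\sF_t$ for all $t\le\tn$ by causality, and that the fixed-point output really is $C\,S(\p)x_0$ in the extended sense), and the identification of the closed-loop generator with $G=\ker(L-C)$ via Greiner's resolvent formula --- are exactly where the work of \cite{ABE:13} lies, and both rely on the specific structure $L_A=(\lambda-\Ame)L_\lambda$ and $C\in\sL([D(A_m)],\dX)$ rather than on soft arguments. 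Your proposal correctly names these obstacles and the hypotheses that resolve them, so it is an accurate reconstruction of the cited proof strategy, but it should not be mistaken for a self-contained argument.
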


\begin{rem}\label{rem:C-admiss}
If assumption~(ii) in \autoref{thm:pert-bc} is satisfied, then the operator $C$ is called a $p$-admissible \emph{observation operator} for $\Tt$. In this case there exist a unique ``\emph{observability map}'' $\sC_{\tn}\in\sL(X,\rL^p[0,\tn],\dX))$ such that
\begin{equation*}
\sC_{\tn} x=C\cdot T(\p)x
\qquad\text{for all }x\in D(A).
\end{equation*}
\end{rem}

If we put $\Phi:=L-C\in\sL([D(A_m)],\partial X)$ we obtain the following slight modification of \autoref{thm:pert-bc} which fits better our needs in \autoref{sec:Lp-gen}.

\begin{cor}\label{thm:pert-bc-v2}
Assume that there exist $1\le p<+\infty$, $\tn>0$ and $M\ge0$ such that
\begin{alignat*}{2}
\text{(i)}\quad&\int_0^{t_0} T_{-1}(t_0-s) L_A v(s)\ds \in X                    &&\text{for all }v\in\rL^p([0,t_0],\dX),\\
\text{(ii)}\quad&\int_0^{t_0}\bigl\|\Phi\,T(s)x\bigr\|_\dX^p\ds \leq M\cdot\|x\|_X^p&&\text{for all }x\in D(A),\\
\text{(iii)}\quad&\int_0^{\tn}\Bigl\|\Phi\int_0^r T_{-1}(r-s) L_A v(s)\ds\Bigr\|_\dX^p\dr\le M\cdot\|v\|_p^p&&\text{for all }v\in\rW_0^{2,p}([0,t_0],\dX),\\
\text{(iv)}\quad&\sQ_{\tn}\text{ is invertible, where }\sQ_{\tn}\in\sL\bigl(\rL^p([0,t_0],\dX)\bigr)\text{ is given by }&&
\end{alignat*}
\begin{equation*} 
(\sQ_{\tn} v)(\p)=\Phi\int_0^\p T_{-1}(\p-s)L_A v(s)\ds
\quad\text{for all }v\in\rW_0^{2,p}([0,t_0],\dX),
\end{equation*}
Then $G\subset A_m$, $D(G):=\ker(\Phi)$, generates a $C_0$-semigroup on the Banach space $X$.
\end{cor}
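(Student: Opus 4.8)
The statement is essentially a reformulation of \autoref{thm:pert-bc}: putting $\Phi:=L-C$ we have $D(G)=\ker(L-C)=\ker(\Phi)$, so $G$ is literally the same operator in both results. The plan is therefore to \emph{derive} the hypotheses (i)--(iv) of \autoref{thm:pert-bc}, which are phrased in terms of $C$, from the hypotheses (i)--(iv) of \autoref{thm:pert-bc-v2}, which are phrased in terms of $\Phi$, and then to invoke \autoref{thm:pert-bc} verbatim. Condition~(i) is identical in the two statements, since it involves only $L_A$ and neither $C$ nor $\Phi$, so there is nothing to show for it.

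The key ingredient will be an identity relating $\sQ_{\tn}$ to the input--output map $\sF_{\tn}$ from \eqref{sFt}. Writing $\sB_r v:=\int_0^r T_{-1}(r-s)L_A v(s)\ds$, which lies in $X$ by condition~(i), the crucial point is that for $v\in\rW_0^{2,p}([0,\tn],\dX)$ the function $r\mapsto\sB_r v$ is the classical solution of the boundary control system $\dot x(t)=A_m x(t)$, $Lx(t)=v(t)$, $x(0)=0$; in particular $\sB_r v\in D(A_m)$ with $L\,\sB_r v=v(r)$. This is the defining property of the controllability map and rests on the decomposition $A_m=A_{-1}+L_A L$ on $D(A_m)$; it is used in exactly the same way in \autoref{lem:BCS-wp-e} and \autoref{lem:BCS-wp}. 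Granting it, I would compute
\begin{equation*}
(\sQ_{\tn}v)(r)=\Phi\,\sB_r v=(L-C)\sB_r v=v(r)-C\,\sB_r v=\bigl((I-\sF_{\tn})v\bigr)(r),
\end{equation*}
so that $\sQ_{\tn}=I-\sF_{\tn}$ on the dense subspace $\rW_0^{2,p}([0,\tn],\dX)$.

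With this identity the remaining translations are routine. For~(ii): if $x\in D(A)=\ker(L)$ then $T(s)x\in D(A)$ for all $s\ge0$, whence $\Phi\,T(s)x=(L-C)T(s)x=-C\,T(s)x$ and $\|\Phi\,T(s)x\|_\dX=\|C\,T(s)x\|_\dX$; thus the admissibility estimate for $\Phi$ is equivalent to the one for $C$, i.e.\ $C$ is a $p$-admissible observation operator for $\Tt$. For~(iii): boundedness of $\sQ_{\tn}$ forces boundedness of $\sF_{\tn}=I-\sQ_{\tn}$, so \eqref{sFt} indeed defines a bounded map, and the identity $\sQ_{\tn}=I-\sF_{\tn}$ extends from $\rW_0^{2,p}$ to all of $\rL^p([0,\tn],\dX)$. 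For~(iv): $\sQ_{\tn}=I-\sF_{\tn}$ is invertible precisely when $1\in\rho(\sF_{\tn})$, which is hypothesis~(iv) of \autoref{thm:pert-bc}. Having verified (i)--(iv) there, \autoref{thm:pert-bc} will yield that $G$ generates a $C_0$-semigroup on $X$.

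The main obstacle is the identity $L\,\sB_r v=v(r)$, i.e.\ that the controllability map reproduces the prescribed boundary datum. This is the only non-algebraic step: it is where condition~(i) (ensuring $\sB_r v\in X$) and the regularity $v\in\rW_0^{2,p}$ are actually used, and it must be extracted from the boundary control system theory of \cite{ABE:13} rather than from a formal manipulation of $\Phi=L-C$. Once it is available, the whole argument reduces to the two observations that $\Phi$ agrees with $-C$ on $D(A)=\ker(L)$ and that, on the range of the controllability map, the term $Lf$ contributes exactly the summand $v(r)$ that turns $\sF_{\tn}$ into $I-\sF_{\tn}$.
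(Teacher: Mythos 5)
Your proposal is correct and follows essentially the same route as the paper: reduce to \autoref{thm:pert-bc} via $\Phi=L-C$, using that $LT(s)x=0$ on $D(A)=\ker(L)$ for (ii) and the identity $L\int_0^r T_{-1}(r-s)L_A v(s)\ds=v(r)$ (which the paper obtains by integrating by parts twice, and you obtain from the boundary-control interpretation of the controllability map) to get $\sQ_{\tn}=\Id-\sF_{\tn}$ for (iii) and (iv). No gaps.
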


\begin{proof}
Since the assumptions~(i) are the same, it suffices to show that the hypotheses (ii)--(iv) imply the corresponding assumptions in \autoref{thm:pert-bc}.

\smallbreak
(ii) This is clear since $LT(s)x=0$ for all $x\in D(A)=\ker(L)$ and $s\ge0$.

(iii) Using integration by parts twice, one sees that for all $v\in\rW_0^{2,p}([0,t_0],\dX)$
\[
L\int_0^r T_{-1}(r-s) L_A v(s)\ds=v(r),
\ r\in[0,\tn]
\]
which implies (iii) in the previous result.

(iv) By the previous point it also follows that $\Id-\sF_{\tn}=\sQ_{\tn}$ which implies the corresponding assumption in \autoref{thm:pert-bc}.
\end{proof}

In \cite{EKKNS:10} we showed two versions of variation of parameters formula for the solutions to boundary control problems. By using them we obtain the following equivalence which is quite helpful to verify the first assumption in the previous two results.

\begin{lem}\label{lem:admiss-LA}
For $p\in[1,\infty)$ the following are equivalent.
\begin{enumerate}
\item Assumption~(i) in \autoref{thm:pert-bc} and \autoref{thm:pert-bc-v2} is satisfied.
\item There exists $\tn>0$ and a strongly continuous family $\Bttn\subset\sL(\rL^p([0,\tn],\dX),X)$ such that for every $u\in\rW^{2,p}_0([0,\tn],\dX)$ the function
\begin{equation*}
x:[0,\tn]\to X,\quad x(t):=\sBt u
\end{equation*}
is a classical solution of the boundary control problem
\begin{equation}
%\tag{BCP}
\label{BCP}
\begin{cases}
\dot x(t)=A_m x(t),&0\le t\le\tn,\\
L x(t)=u(t),&0\le t\le\tn,\\
x(0)=0.
\end{cases}
\end{equation}
\end{enumerate}
In this case for $t\in(0,\tn]$ the operator $\sBt$ coincides with the ``\emph{controllability map}'', i.e.,
\begin{equation}\label{eq:def-cont-map}
\sBt u=\int_0^{t} T_{-1}(t-s) L_A u(s)\ds
\qquad\text{for }u\in\rL^p[0,\tn],\dX).
\end{equation}
\end{lem}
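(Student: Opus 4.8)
The plan is to prove the equivalence by identifying the family $\Bttn$ in (2) with the controllability map $\sBt u=\int_0^t T_{-1}(t-s)L_A u(s)\ds$ and invoking the two variation of parameters formulas from \cite{EKKNS:10}. The guiding observation is that this integral always converges in the extrapolation space $\Xme$ and defines, for each $t\in[0,\tn]$, a bounded operator from $\rL^p([0,\tn],\dX)$ into $\Xme$; both conditions assert, in different disguises, that these $\Xme$-valued operators actually map into $X$.

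For the implication (1)$\Rightarrow$(2) I would start from assumption~(i) of \autoref{thm:pert-bc}, which states exactly that the bounded operator $u\mapsto\int_0^{\tn}T_{-1}(\tn-s)L_A u(s)\ds$ from $\rL^p([0,\tn],\dX)$ into $\Xme$ has range contained in $X$. Since the embedding $X\hookrightarrow\Xme$ is continuous and injective, the closed graph theorem upgrades this to $\sB_{\tn}\in\sL(\rL^p([0,\tn],\dX),X)$. To pass to times $t\in(0,\tn)$ I would use the translation identity: for $\tilde u(s):=u(s-(\tn-t))$ extended by $0$, a change of variables gives $\sBt u=\sB_{\tn}\tilde u$, so that each $\sBt$ is bounded into $X$ with a norm bound uniform in $t$. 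For $u\in\rW^{2,p}_0([0,\tn],\dX)$ one then checks that $x(t):=\sBt u$ is a classical solution of \eqref{BCP}: $x(0)=0$ is immediate, the boundary trace $Lx(t)=u(t)$ follows from the identity $L\int_0^t T_{-1}(t-s)L_A u(s)\ds=u(t)$ already exploited in the proof of \autoref{thm:pert-bc-v2}(iii), and $\dot x(t)=A_m x(t)$ is the variation of parameters formula of \cite{EKKNS:10}, using $A_m=(\Ame+L_A L)|_{D(A_m)}$. Strong continuity of $\Bttn$ then follows from continuity on the dense subspace $\rW^{2,p}_0$ together with the uniform bound.

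For the converse (2)$\Rightarrow$(1) I would show that any family as in (2) must coincide with the controllability map on regular data. For $u\in\rW^{2,p}_0([0,\tn],\dX)$ the classical solution $\sBt u$ from (2), read in $\Xme$, solves the inhomogeneous problem $\dot x=\Ame x+L_A u$, $x(0)=0$, whose unique mild solution is $\int_0^t T_{-1}(t-s)L_A u(s)\ds$; hence the two agree for all such $u$, and in particular the integral lies in $X$. Because $\rW^{2,p}_0$ is dense in $\rL^p([0,\tn],\dX)$ and both sides depend continuously on $u$---the right-hand side into $\Xme$, the left via boundedness of $\sB_{\tn}$ into $X\hookrightarrow\Xme$---the equality and the inclusion into $X$ extend to all of $\rL^p([0,\tn],\dX)$. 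This is precisely condition~(1), and the same identification yields the final formula \eqref{eq:def-cont-map}.

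The step I expect to be the main obstacle is the rigorous correspondence between the $\Xme$-valued integral and a genuine classical, $X$-valued solution of \eqref{BCP} that is regular enough to satisfy both $\dot x=A_m x$ and $Lx=u$ pointwise; this is exactly where the two variation of parameters formulas of \cite{EKKNS:10} are essential. The remaining ingredients---the closed graph upgrade, the translation identity $\sBt u=\sB_{\tn}\tilde u$, and the density-plus-uniform-bound extensions---are routine once this correspondence is available.
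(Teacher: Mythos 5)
Your proposal is correct and follows essentially the same route as the paper: identify $\sB_t$ with the controllability map \eqref{eq:def-cont-map}, upgrade via the closed graph theorem, use the two variation of parameters formulas of \cite{EKKNS:10} (the paper cites Prop.~2.7 and 2.8 there for exactly the solution/uniqueness correspondence you flag as the main obstacle), and extend by density of $\rW^{2,p}_0$. The only cosmetic difference is that you derive strong continuity from the translation identity plus a uniform bound, where the paper instead invokes \cite[Cor.~3.16]{BE:13}.
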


\begin{proof}
(a)$\Rightarrow$(b) By assumption \eqref{eq:def-cont-map}
defines a bounded operator from $\rL^p([0,\tn],\dX)$ to $\Xme$ satisfying $\rg(\sBt)\subset X$. By the closed graph theorem and \cite[Cor.~3.16]{BE:13} this implies that $\Bttn\subset\sL(\rL^p([0,\tn],\dX),X)$ is strongly continuous. Finally, by \cite[Prop.~2.8]{EKKNS:10} for $u\in\rW^{2,p}_0([0,\tn],\dX)$ the function $x(t):=\sB_tu$ gives the unique classical solution of \eqref{BCP}.

\smallbreak
(b)$\Rightarrow$(a) Define $\tilde\sB_{t_0}\in\sL(\rL^p([0,\tn],\dX),\Xme)$ by the right-hand-side of \eqref{eq:def-cont-map} for $t=\tn$. Then by \cite[Prop.~2.7]{EKKNS:10} we have
\[
\tilde\sB_{t_0}\big|_{\rW^{2,p}_0([0,\tn],\dX)}=\sB_{t_0}\big|_{\rW^{2,p}_0([0,\tn],\dX)}.
\]
Since $\rW^{2,p}_0([0,\tn],\dX)\subset\rL^p([0,\tn],\dX)$ is dense this implies $\tilde\sB_{t_0}=\sB_{t_0}\in\sL(\rL^p([0,\tn],\dX),X)$. Hence, $\rg(\tilde\sB_{t_0})\subseteq X$, i.e., (a) is satisfied.
\end{proof}

For more details and examples regarding the above perturbation results we refer to \cite{ABE:13, ABE:15, BE:13}. 

\subsection{Two auxiliary results}

We state and prove two results concerning the inverse and derivative of matrix-valued functions. 

\begin{lem}\label{lem:SR-Lip.cont} Let $I\subseteq\RR$ be an interval and $d(\p):I\to\rM_n(\CC)$ be Lipschitz continuous and bounded, such that $\sigma(d(s))\subset(0,+\infty)$ for all $s\in I$.
If $I$ is not compact assume in addition that there exists $\eps>0$ such that $\sigma(d(s))\subseteq[\eps,\frac1\eps]$ for all $s\in I$.
Then $d^{-1}(\p):I\to\rM_n(\CC)$ is Lipschitz continuous and bounded as well.
\end{lem}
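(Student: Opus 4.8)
The plan is to reduce the whole statement to a single uniform estimate, namely that
\[
C:=\sup_{s\in I}\bigl\|d(s)^{-1}\bigr\|<+\infty,
\]
and then to read off both claimed properties from it. Indeed, once this bound is available, boundedness of $d^{-1}(\p)$ is immediate, while Lipschitz continuity follows from the elementary identity
\[
d(s)^{-1}-d(t)^{-1}=d(s)^{-1}\bigl(d(t)-d(s)\bigr)d(t)^{-1},\qquad s,t\in I,
\]
which yields $\bigl\|d(s)^{-1}-d(t)^{-1}\bigr\|\le C^{2}\cdot\|d(s)-d(t)\|\le C^{2}L\cdot|s-t|$, where $L$ denotes the Lipschitz constant of $d(\p)$. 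Thus the only real work lies in establishing the uniform bound on $\|d(s)^{-1}\|$, and only here is boundedness (and, in the non-compact case, the spectral gap) needed; Lipschitzness of $d(\p)$ enters exclusively in the final step.

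To obtain the uniform bound I would distinguish the two cases allowed by the hypotheses. If $I$ is compact, then $s\mapsto d(s)$ is continuous and, since $0\notin\sigma(d(s))$, takes values in the open set of invertible matrices; as matrix inversion is continuous there, $s\mapsto d(s)^{-1}$ is continuous on the compact set $I$ and hence bounded. If $I$ is \emph{not} compact, I would use the adjugate (cofactor) formula $d(s)^{-1}=(\det d(s))^{-1}\cdot\operatorname{adj}(d(s))$. Because all eigenvalues of $d(s)$ are positive reals lying in $[\eps,\tfrac1\eps]$, the determinant equals the product of the eigenvalues, so $\det d(s)\ge\eps^{n}>0$ uniformly in $s$. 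On the other hand, each entry of $\operatorname{adj}(d(s))$ is, up to sign, a minor of order $n-1$, hence a polynomial of degree $n-1$ in the entries of $d(s)$; consequently $\|\operatorname{adj}(d(s))\|\le c_n\|d(s)\|^{n-1}\le c_n K^{n-1}$ for a purely dimensional constant $c_n$ and $K:=\sup_{s\in I}\|d(s)\|<+\infty$. Combining the two estimates gives $\|d(s)^{-1}\|\le c_n K^{n-1}\eps^{-n}$ for all $s\in I$, as required.

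The main subtlety to flag is that $d(s)$ need \emph{not} be normal, so the spectral bound $\sigma(d(s))\subseteq[\eps,\tfrac1\eps]$ does not by itself control $\|d(s)^{-1}\|$: a highly non-normal matrix can have all eigenvalues near $1$ yet an arbitrarily large condition number. The point of the argument is precisely to decouple the two ingredients — the eigenvalue lower bound controls the determinant from below, while the uniform norm bound on $d(\p)$ controls the adjugate from above — and it is their combination that yields a uniform bound on the inverse. Over a compact interval both ingredients are automatic from continuity, which is why that case can be dispatched purely by a continuity-and-compactness argument, whereas in the non-compact case the extra hypothesis $\sigma(d(s))\subseteq[\eps,\tfrac1\eps]$ together with boundedness of $d(\p)$ is exactly what restores the uniformity.
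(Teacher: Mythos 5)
Your proof is correct and follows essentially the same route as the paper's: a uniform bound on $\bigl\|d(s)^{-1}\bigr\|$ obtained from the determinant lower bound $\det d(s)\ge\eps^n$ together with the adjugate (inversion) formula, followed by the identity $d(s)^{-1}-d(t)^{-1}=d(s)^{-1}\bigl(d(t)-d(s)\bigr)d(t)^{-1}$ to transfer the Lipschitz property. Your write-up is in fact slightly more explicit than the paper's, both in dispatching the compact case by continuity of inversion and in pointing out that the spectral bound alone does not control $\|d(s)^{-1}\|$ for non-normal matrices, so that the boundedness of $d(\p)$ is genuinely needed to estimate the adjugate.
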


\begin{proof} 
By assumption or by compactness of $I$ there exists $\eps>0$ such that $\sigma(d(s))\subset[\eps,\frac1\eps]$ for all $s\in I$. This implies $\det(d(s)^{-1})\in[\eps^n,\frac1{\eps^n}]$  for all $s\in I$, hence by the inversion formula for matrices there exists $M>0$ such that $\|d(s)^{-1}\|\le M$ for all $s\in I$. Moreover,
\begin{align*}
\bigl\|d^{-1}(s)-d^{-1}(r)\bigr\|\le
\bigl\|d^{-1}(r)\bigr\|\cdot\bigl\|d(r)-d(s)\bigr\|\cdot\bigl\|d^{-1}(s)\bigr\|
\le M^2\cdot\bigl\|d(r)-d(s)\bigr\|
\end{align*}
for all $s,r\in I$, i.e. $d^{-1}(\p)$ is bounded and Lipschitz continuous as claimed.
\end{proof}

\begin{cor}\label{cor:reg-prod}
Let $d(\p)$ as in \autoref{lem:SR-Lip.cont}.
Then for $f:I\to\CC^n$ we have
\[
f\in\rW^{1,p}(I,\CC^n)
\quad\iff\quad
d\cdot f\in\rW^{1,p}(I,\CC^n)
\]
and in this case $(d\cdot f)'=d'\cdot f+d\cdot f'$.
\end{cor}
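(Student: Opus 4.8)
The plan is to reduce the statement to the classical Leibniz rule for Sobolev functions on an interval, exploiting that both $d(\p)$ and---by \autoref{lem:SR-Lip.cont}---its inverse $d^{-1}(\p)$ are Lipschitz continuous and bounded, hence belong to $\rW^{1,\infty}(I,\rM_n(\CC))$ with $d',(d^{-1})'\in\rL^\infty(I,\rM_n(\CC))$.

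First I would prove the forward implication. Assume $f\in\rW^{1,p}(I,\CC^n)$. Since $d(\p)$ is bounded we immediately obtain $d\cdot f\in\rL^p(I,\CC^n)$ from $\|d\cdot f\|_p\le\|d\|_\infty\cdot\|f\|_p$, and the candidate derivative $d'\cdot f+d\cdot f'$ lies in $\rL^p$ as well: indeed $d'\in\rL^\infty$ gives $d'\cdot f\in\rL^p$, while boundedness of $d$ gives $d\cdot f'\in\rL^p$. It then remains to verify that this candidate is the weak derivative of $d\cdot f$. Here I would argue componentwise, using that a $\rW^{1,p}$-function on an interval admits a locally absolutely continuous representative, whereas a Lipschitz function is absolutely continuous on every compact subinterval with a.e.\ derivative equal to its weak derivative. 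The product of two absolutely continuous functions is absolutely continuous on each compact subinterval and satisfies the classical product rule a.e.; testing against $\varphi\in\rC_c^\infty(I)$ and integrating by parts then yields $\int_I (d\cdot f)\,\varphi'=-\int_I(d'\cdot f+d\cdot f')\,\varphi$, which is precisely the assertion that $d\cdot f\in\rW^{1,p}(I,\CC^n)$ with $(d\cdot f)'=d'\cdot f+d\cdot f'$.

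For the reverse implication I would simply apply the forward direction with $d$ replaced by $d^{-1}$. By \autoref{lem:SR-Lip.cont} the hypotheses of the corollary are satisfied by $d^{-1}(\p)$ in place of $d(\p)$, so if $g:=d\cdot f\in\rW^{1,p}(I,\CC^n)$, then $f=d^{-1}\cdot g\in\rW^{1,p}(I,\CC^n)$; this gives the claimed equivalence.

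The one point requiring care---and the step I expect to be the main (mild) obstacle---is the global $\rL^p$-membership and the passage from a local to a genuine weak derivative in the noncompact case $I=\RRp$. The local identity on compact subintervals upgrades to a weak derivative on all of $I$ precisely because the candidate $d'\cdot f+d\cdot f'$ is globally of class $\rL^p$, which is guaranteed by the uniform bounds on $d,d'$ (and, for the converse, on $d^{-1},(d^{-1})'$) supplied by \autoref{lem:SR-Lip.cont}. Everything else is the standard Leibniz rule and presents no difficulty.
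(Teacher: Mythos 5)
Your proposal is correct and follows essentially the same route as the paper: establish the Leibniz rule locally (the paper cites \cite[Cor.~8.10]{Bre:11} where you argue directly via absolutely continuous representatives), upgrade to global $\rW^{1,p}$-membership using the uniform bounds on $d$ and $d'$, and obtain the converse by writing $f=d^{-1}\cdot(d f)$ and invoking \autoref{lem:SR-Lip.cont}. No gaps.
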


\begin{proof} ``$\Rightarrow$'': Since $d(\p)\in\rW^{1,p}_\loc(I,\rM_n(\CC))$, by \cite[Cor.~8.10]{Bre:11} we conclude that $d\cdot f\in\rW^{1,p}_{\loc}(I,\CC^n)$ and $(d\cdot f)'=d'\cdot f+d\cdot f'$. By assumption or by compactness of $I$ there exists $\eps>0$ such that $\sigma(d(s))\subset[\eps,\frac1\eps]$ for all $s\in I$. If $L$ denotes the Lipschitz constant for $d(\p)$ this implies
\begin{align*}
\|d\cdot f\|_{\rW^{1,p}(I,\CC^n)}^p
\le\tfrac1{\eps^p}\cdot\|f\|_p^p+L^p\cdot\|f\|_p^p+\tfrac1{\eps^p}\cdot\|f'\|_p^p<+\infty,
\end{align*}
hence $d\cdot f\in\rW^{1,p}(I,\CC^n)$.
To show ``$\Leftarrow$'' we write $f=d^{-1}\cdot df$ and use \autoref{lem:SR-Lip.cont}.
\end{proof}

\nocite{EN:00}

\subsection*{Acknowledgments}
The authors thank the anonymous referee for many helpful comments and suggestions which helped to improve the readability of this paper and to broaden its scope.
The second author acknowledges financial support from the Slovenian Research Agency, Grant No.~P1-0222.

%%% References

\end{document}